\documentclass{amsart}
\usepackage{amsfonts}
\usepackage{amscd}
\usepackage{amssymb}

\numberwithin{equation}{section}


\theoremstyle{plain}

\newtheorem{theorem}{Theorem}[section]
\newtheorem{proposition}[theorem]{Proposition}
\newtheorem{lemma}[theorem]{Lemma}
\newtheorem{corollary}[theorem]{Corollary}
\newtheorem{conjecture}[theorem]{Conjecture}
\newtheorem{definition}[theorem]{Definition}

\theoremstyle{definition}

\newtheorem{remark}[theorem]{Remark}


\newcommand{\R}{{\mathbb R}}

\newcommand{\C}{{\mathbb C}}
\newcommand{\N}{{\mathbb N}}
\newcommand{\T}{{\mathbb T}}
\newcommand{\F}{{\mathcal F}}
\newcommand{\supp}{{\operatorname {supp}\,}}

\renewcommand{\H}{{\mathcal H}}
\renewcommand{\S}{{\mathcal S}}

\renewcommand{\Im}{{\mathop{\mathrm{Im}}\,}}

\begin{document}
\title[Real Paley--Wiener theorems and local spectral radius formulas]
{{Real Paley--Wiener theorems and local spectral radius formulas}}
\author{Nils~Byrial~Andersen}
\address{Nils~Byrial~Andersen,
Alssundgymnasiet S\o nderborg,
        Grundtvigs All\'e 86,
        6400 S\o nderborg,
        Denmark}
\email{nb@ags.dk}
\thanks{The first author was supported by a research
grant from the {\it{European Commission}}
IHP Network: 2002--2006
{\it{Harmonic Analysis and Related Problems}}
(Contract Number: HPRN-CT-2001-00273 - HARP)}
\author{Marcel de Jeu}
\address{Marcel~de~Jeu, Mathematical Institute,
Leiden University,
P.O. Box 9512,
2300 RA Leiden,
The Netherlands}
\email{mdejeu@math.leidenuniv.nl}
\date{18 April, 2008}
\subjclass[2000]{Primary 42B10; Secondary 47A11}
\keywords{Paley--Wiener theorem, Fourier transform, constant coefficient differential operator, local spectrum, local spectral radius formula}

\begin{abstract}
We systematically develop real Paley--Wiener theory for the Fourier transform on $\R^d$ for Schwartz functions,
$L^p$-functions and distributions, in an elementary treatment based on the inversion theorem. As an application, we show how versions of classical Paley--Wiener theorems can be derived from the real ones via an approach which does not involve domain shifting and which may be put to good use for other transforms of Fourier type as well. An explanation is also given why the easily applied classical Paley--Wiener theorems are unlikely to be able to yield information about the support of a function or distribution which is more precise than giving its convex hull, whereas real Paley--Wiener theorems can be used to reconstruct the
support precisely, albeit at the cost of combinatorial complexity. We indicate a possible application of real Paley--Wiener theory to partial differential equations in this vein and furthermore we give evidence that a number of real Paley--Wiener results can be expected to have an interpretation as local spectral radius formulas. A comprehensive overview of the literature on real Paley--Wiener theory is included.
\end{abstract}
\maketitle

\section{Introduction and overview}\label{sec:intro}

A Paley--Wiener theorem is a characterization, by relating support to growth, of the image of a space of functions
or distributions under a transform of Fourier type.
Starting with the original Paley--Wiener theorem, \cite[Theorem X]{PW}, which describes the
Fourier transform of $L^2$-functions on the real line with support in a symmetric interval
as entire functions of exponential type whose restriction to the real line are $L^2$-functions, such
results have proven to be a basic tool for transform in various set-ups. As a familiar example, if $f$ is a smooth
compactly supported function on $\R^d$ with Fourier transform $\F f:\R^d\to\C$, then it is easily seen that $\F f$ extends
to an entire function on $\C^d$ with the property that, for each $n=0,1,2,\ldots$, there exists a constant $C_n$, such that
\begin{equation}\label{eq:PWestintro}
|\F f(z)|\leq C_n (1+|z|)^{-n} e^{H_A(\Im z)}\qquad(z\in\C^d),
\end{equation}
where $A$ is the convex hull of the support of $f$ and $H_A:\R^d\to\R$ is its supporting function, defined in terms of the standard inner
product as $H_A(x)=\max_{a\in A}a\cdot x$, for $x\in\R^d$. The non-trivial part of the Paley--Wiener theorem for smooth functions is the
converse: if $A$ is a compact and convex subset of $\R^d$, and if an entire function $g:\C^d\to\C$ satisfies estimates as
in \eqref{eq:PWestintro}, then $g$ is the Fourier transform of a smooth function supported in $A$. Whereas this is all common
knowledge, it seems to be less widely known that $H_{\textup{co}(\supp f)}$ -- where $\textup{co}(\supp f)$ is the convex hull
of the support of $f$ -- does not just give a theoretical upper bound for the dominant part of the growth of $\F f$ on $\C^d$
as in \eqref{eq:PWestintro}, but actually prescribes it: the exponential function in \eqref{eq:PWestintro} is really needed,
and precisely in this form. This observation goes back to Plancherel and P{\'o}lya \cite{PlPo}; there is also a detailed account
available as \cite[Theorem~3.4.2]{RoEntire}. To be more concrete, their result, when combined with \cite[Theorem~3.4.3]{RoEntire},
implies that, if $y\in\R^d$ is fixed,
\begin{equation}\label{eq:prescribed}
\limsup_{t\to\infty} \frac{1}{t} \log|\F f(x+ity)|=H_{\textup{co}(\supp f)}(y),
\end{equation}
for almost all $x$ in $\R^d$. Consequently, if $f_1$ and $f_2$ are smooth compactly supported functions such that the convex hulls of their
supports are equal, and if $y\in\R^d$ is fixed,
then $\limsup_{t\to\infty} \frac{1}{t} \log|\F f_1(x+ity)|=\limsup_{t\to\infty} \frac{1}{t} \log|\F f_2(x+ity)|$ for
almost all $x$ in $\R^d$. Informally speaking, this means that the growth of $\F f$ on $\C^d$ enables one to retrieve
the convex hull of the support of $f$, but that more precise information can not readily be obtained from it. Hence the Paley--Wiener
theorem is optimal in this respect. This phenomenon is not limited to smooth functions: \cite{PlPo} was already concerned with the more
general $L^2$-case and it is known \cite[Theorem~5.1.2]{RoRegular}, \cite{Napalkov}, that the straightforward analogue of \eqref{eq:prescribed}
is true for a general compactly supported distribution $T$. Thus the dominant part of the growth of $\F T$ on $\C^d$ is prescribed by the convex
hull of the support of $T$, irrespective of the precise support or the degree of regularity of $T$, and the Paley--Wiener theorem for
distributions with compact support is again optimal in this respect.

Whereas in view of the above there seems to exist a fundamental theoretical obstruction for the classical Paley--Wiener theorems to ``look inside'' the
convex hull of the support, this is not the case for a new type of theorems which have enjoyed increasing interest in recent years.
They have become known as ``real Paley--Wiener theorems'', in which the adjective ``real'' expresses that
information about the support of $f$ comes from growth rates associated to the function $\F f$ on $\R^d$, rather than on $\C^d$ as in the
classical ``complex Paley--Wiener theorems''. A typical example is the following: If $P$ is a polynomial with corresponding constant
coefficient differential operator $P(\partial)$, $f$ is a Schwartz function on $\R^d$, and $1\leq p\leq\infty$, then in the extended
positive real numbers
\begin{equation}\label{eq:limSchwartzintro}
\lim _{n\to \infty}\| P(\partial )^n f\| _p ^{1/n} =\sup _{\lambda \in \supp \F f} |P(i\lambda)|.
\end{equation}

Thus, as compared to the complex theorems, the real theorems involve a growth rate as $n\to\infty$ rather
than $|z|\to\infty$.\footnote{The roles of $f$ and $\F f$ in the literature on real Paley--Wiener theorems $\R^d$
are reversed as compared to the complex theorems, so that some mental gymnastics is inevitable, but this is not a
consequence of unfortunate choices. Both roles have their place in general Lie theory, and it is only
because $\R^d$ happens to be self-dual that the results for the group and the unitary dual intermingle at equal
footing. See also \cite{AdJ2}.} What is more important from a theoretical point of view: if an upper bound $M$ for the
left hand side in \eqref{eq:limSchwartzintro} is
known, then one concludes that $\supp \F f$ is contained in $\{\lambda\in\R^d : |P(i\lambda)|\leq M\}$, which need be neither compact
nor convex. In fact, the complex theorem will not even apply if $\F f$ does not have compact support, although the real theorem still does.
It is in this fashion that more information about the support can be extracted compared to using the complex theorems, albeit at the cost of
combinatorial complexity. One can even reconstruct the support itself in a number of cases. As an example, it is one of our results
that, if $1\leq p\leq\infty$, $f$ is an $L^p$-function and $K$ is compact, then the support of the distribution $\F f$ is contained in $K$
if, and only if,
\begin{equation}\label{eq:reconstruct}
\lim_{n\to \infty}\| P(\partial )^n  f\| _p ^{1/n}  \le \sup _{k\in K} |P (ik)|,
\end{equation}
for all polynomials $P$.\footnote{It follows from the hypotheses that $P(\partial )^n  f$ is in $L^p$ for all $n$.} In Section~\ref{sec:realPW}, it is also explained how it sometimes is
sufficient for this reconstruction to control $ \| P(\partial )^n  f\| _p ^{1/n}$ for
sufficiently many polynomials, rather than for all of them. We have similar results for general tempered distributions.

Our results in real Paley--Wiener theory are contained in Section~\ref{sec:realPW} below. We
consider pointwise estimates as well as $L^p$-norms; Schwartz functions as well as $L^p$-functions
and distributions; and single polynomial results such as \eqref{eq:limSchwartzintro}
as well as multiple polynomial results such as \eqref{eq:reconstruct}. The reader who consults
Bang's work, \cite{Ba1,Ba3}, and Tuan's work, \cite{Tu1,Tu2}, will see that our work is related to these
papers, but that, where there is overlap, our results are more general and that the proofs are
quite different. As an example, \eqref{eq:limSchwartzintro} for Schwartz functions was first
established by Bang \cite{Ba1}, in one dimension and with $P(x)=x$, using the complex Paley--Wiener
theorem, and later by Tuan, in arbitrary dimension for real polynomials, using the Plancherel
theorem \cite{Tu2}. In our paper we systematically base our proofs on the inversion theorem,
and this yields \eqref{eq:limSchwartzintro} for arbitrary polynomials, as well as its analogue
for $L^p$-functions which was hitherto not within reach. Our proofs are also considerably more elementary,
especially compared to the proofs using Sobolev theory for elliptic equations in \cite{Ba3}.
By giving a systematic broad development of the theory, based on simple proofs and extending as
well as generalizing previous insights, we hope to provide a basis for future developments -- also for other transforms -- of this theory which was initiated by Bang and Tuan.

It is noteworthy that real Paley--Wiener theorems can provide a new and alternative method of proof
for complex Paley--Wiener theorems. This is illustrated in Section~\ref{sec:complexPW} below, where
we derive complex Paley--Wiener theorems for the Fourier transform from the real ones in Section~\ref{sec:realPW}.
One does not obtain the strongest possible forms of the complex theorems for the Fourier transform in this way, but
the important feature of this approach is that it does not involve shifting of the domain, as in the usual proofs of
the complex results. This may work for other integral transforms as well and, in fact, in \cite{AdJ1} it was
demonstrated how this idea can be employed fruitfully to prove a complex Paley--Wiener theorem for the Dunkl transform
in one dimension, where the idea of contour shifting does simply not apply since the integrand is not entire.

We have included a comprehensive overview of the literature on real Paley--Wiener theory in Section~\ref{sec:comparison}, in which we discuss the most important contributions by Bang, Tuan and others, and compare them to our results. This section contains a fair number of references and, although we do not claim completeness, we expect that the material in this section will be useful to anyone who wants to gain an overview of the field at this time.

The final Section~\ref{sec:perspectives} contains perspectives for future developments.
One of these, and an intriguing one in our opinion, is a possible interpretation of equations such as \eqref{eq:limSchwartzintro} in local spectral theory.
Namely, it may well be that the right hand side of \eqref{eq:limSchwartzintro} is the radius of a local spectrum, so that \eqref{eq:limSchwartzintro} is a manifestation of a local spectral radius formula for an unbounded operator. For $p=1$ we are, in fact, able to prove this and we have established similar results for compact connected Lie groups \cite{AdJ2}. Since we are not aware of general results on the a priori validity of a local spectral radius formula for unbounded operators (quite contrary to the bounded case), and since this also gives a new angle on real Paley--Wiener theorems, we discuss this subject in some detail. We are indebted to Jan van Neerven for suggesting the possibility of a connection with local spectral theory.

\bigskip

\textit{Notations and preliminaries.} Our notations are the usual ones, as in \cite{Hoer} or \cite[Chapter 7]{Ru},
where also the proofs of the properties mentioned below can be found. We let $\N=\{1,2,3,\ldots\}$ and $\N_0=\{0,1,2,\ldots\}$.
Furthermore, $\S(\R^d)$ stands for the Schwartz space on $\R^d$ with dual $\S '(\R^d)$, the space of tempered distributions on $\R^d$.
Recall that $L^p(\R^d) \subset \S '(\R^d)$, for all $1\le p\le \infty$.
If $P$ is a polynomial on $\R^d$, we let $P(\partial)$ denote the corresponding differential operator with constant coefficients,
so that $P(\partial)e^{i \lambda\cdot x} = P(i\lambda) ^{i \lambda\cdot x}$,
for all $\lambda,x$. For $T\in \S'(\R^d)$, the distribution $P(\partial) T\in  \S'(\R^d)$ is defined by
\begin{equation*}
\langle  P(\partial)f,  \phi \rangle =\langle  f,  P(-\partial) \phi \rangle
\qquad (\phi \in \S(\R^d)),
\end{equation*}
which is compatible with the action on smooth functions.
The convolution $f*g$ of two functions $f,g\in L^1(\R^d)$ is defined as
\begin{equation}\label{conv}
f*g\,(x)=\int _{\R^d}  f (x-y) g(y) dy \qquad (x\in \R^d).
\end{equation}
If $f\in L^p(\R^d),\,1\le p\le \infty$, and $g\in  L^1(\R^d)$, then the integral (\ref{conv})
converges for almost all $x\in \R^d$, and we have the following inequality
\begin{equation*}
\|f*g\|_p\le \|f\|_p \|g\|_1  .
\end{equation*}
We also notice that
\begin{equation*}
P(\partial ) (f*\phi ) =f* P(\partial ) \phi ,
\end{equation*}
for all $f\in L^p(\R^d),\,1\le p\le \infty$, and $\phi\in  \S(\R^d)$.

Our normalization of the Fourier transform $\F f$ of a function $f\in L^1(\R^d)$ is as
\begin{equation*}
\F f (\lambda) =\frac{1}{(2\pi)^{d/2}} \int _{\R^d} f(x) e^{-i \lambda\cdot x}\, dx \qquad (\lambda \in \R^d),
\end{equation*}
where $\lambda \cdot x  = \sum _{i=1}^d \lambda _i x_i$. The inversion formula then holds with the same constant $(2\pi)^{-d/2}$
and the Plancherel theorem is valid with Lebesgue measure on both sides.
The Fourier transform $\F T$ of a tempered distribution $T$ is defined
\begin{equation}\label{distfouriertransform}
\langle \F T,  \phi \rangle =\langle  T,  \F \phi \rangle \qquad (\phi \in \S(\R^d)),
\end{equation}
which is compatible with its definition on $L^1(\R^d)$. Then, for all $T\in\S' (\R^d)$,
\begin{equation}\label{intertwining}
\F (P(\partial ) T) (\lambda) =  P(i\lambda) \F T (\lambda)\qquad (\lambda \in \R).
\end{equation}
Also,
\begin{equation*}
\F (f*\phi) = \F f \cdot \F \phi \qquad (f\in L^p(\R^d),\,\phi\in  \S (\R^d)).
\end{equation*}

It is with some emphasis (see the discussion in Section~\ref{sec:comparison} on the preference for proofs based on the
inversion theorem over those based on the Plancherel theorem), that we mention that all polynomials are complex valued unless otherwise stated.

\section{Real Paley--Wiener theorems}\label{sec:realPW}

Let $f$ be a measurable function on $\R^d$ representing a tempered distribution, and let $P$ be a polynomial.
For $f$ in various classes, this section is concerned with the relation between the growth behavior of the sequence
$\{P(\partial)^n f\}_{n=0}^\infty$ on $\R^d$ and the supremum of $|P(i\lambda)|$ on the support of $\F f$.
The notation for quantities such as the latter is introduced in the following definition.

\begin{definition} Let $T$ be a distribution on $\R^d$ and $P$ a polynomial. Then we let
\[
R(P,T)=\sup\{|P(i\lambda)| : \lambda\in\supp T\}\in\R_{\geq 0}\cup\{\infty\},
\]
where by convention $R(P,T)=0$ if $T=0$.
\end{definition}

A typical statement in the sequel will be that $R(P,\F f)\leq R$, for some $R\geq 0$, or, equivalently,
that $\supp \F f\subset\{\lambda\in\R^d : |P(i\lambda)|\leq R\}$. We will also be concerned with reconstructing $\supp \F f$,
given the knowledge that it is in sufficiently many of such polynomially defined sets.

Our first result in this vein is the following real Paley--Wiener theorem for Schwartz functions. The corresponding
results for tempered distributions in general are included as Propositions~\ref{prop:realPWtempdist1}
and~\ref{prop:realPWtempdist2}, and Theorem~\ref{thm:realPWcompdist}.

\begin{theorem}\label{thm:realPWSchwartz}
Let $P$ be a polynomial, $f\in \S(\R^d)$, and $0< R< \infty$.
Then the following are equivalent:
\begin{enumerate}
\item[(a)] There exists a function $\phi : \N \to \R_+$ such that
\begin{equation*}
\liminf _{n\to \infty}\phi (n)^{1/n} \ge 1/R
\end{equation*}
and
\begin{equation*}
\sup _{n\in \N} \phi (n) \| (1+ |x|)^{d+1} P(\partial)^n f\| _\infty < \infty;
\end{equation*}
\item[(b)] For each $N\in \N_0$, there exists a constant $C$, such that, for all $n\in\N$ and $x\in\R^d$,
\begin{equation*}
|P(\partial)^n f(x)|\leq C n^N R^n(1+|x|)^{-N}
\end{equation*}
\item[(c)] $R(P,\F f) \le R.$
\end{enumerate}
\end{theorem}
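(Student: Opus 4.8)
The plan is to prove the cycle of implications $(c)\Rightarrow(b)\Rightarrow(a)\Rightarrow(c)$, with the inversion theorem as the main tool, since $f\in\S(\R^d)$ gives us the luxury of absolutely convergent Fourier integrals in both directions. The implication $(b)\Rightarrow(a)$ is essentially trivial: given $(b)$ with $N=d+1$, one simply takes $\phi(n)=n^{-(d+1)}R^{-n}$; then $\phi(n)^{1/n}=n^{-(d+1)/n}R^{-1}\to 1/R$, so the liminf condition holds, and the boundedness condition is immediate from the estimate in $(b)$. So the real content is in $(c)\Rightarrow(b)$ and $(a)\Rightarrow(c)$.

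For $(c)\Rightarrow(b)$, I would start from the inversion formula $f(x)=(2\pi)^{-d/2}\int_{\R^d}\F f(\lambda)e^{i\lambda\cdot x}\,d\lambda$ and differentiate under the integral sign using the intertwining relation \eqref{intertwining}, which gives $P(\partial)^n f(x)=(2\pi)^{-d/2}\int_{\R^d}P(i\lambda)^n\,\F f(\lambda)e^{i\lambda\cdot x}\,d\lambda$. Because $\supp\F f\subset\{|P(i\lambda)|\le R\}$, on the effective domain of integration we have $|P(i\lambda)^n|\le R^n$, so crudely $|P(\partial)^n f(x)|\le (2\pi)^{-d/2}R^n\|\F f\|_1$, which already yields the $N=0$ case. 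To get the decay factor $(1+|x|)^{-N}$ and the polynomial-in-$n$ factor $n^N$, I would instead estimate $x^\alpha P(\partial)^n f(x)$ for multi-indices $\alpha$ with $|\alpha|\le N$ by moving the monomial $x^\alpha$ inside as $\partial_\lambda^\alpha$ acting on $e^{i\lambda\cdot x}$ (up to a power of $i$) and integrating by parts, transferring $\partial_\lambda^\alpha$ onto $P(i\lambda)^n\F f(\lambda)$. Leibniz's rule distributes these derivatives; the worst term is when all derivatives hit $P(i\lambda)^n$, producing a factor that is $O(n^{|\alpha|})$ times $|P(i\lambda)|^{n-|\alpha|}$ times derivatives of $P$, all supported where $|P(i\lambda)|\le R$ and hence $O(n^N R^n)$ after absorbing constants; the remaining factors (derivatives of $\F f$, which is again Schwartz) are integrable. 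Summing over $|\alpha|\le N$ and using that $(1+|x|)^N$ is comparable to $\sum_{|\alpha|\le N}|x^\alpha|$ gives exactly the bound in $(b)$. One subtlety: if $\F f$ is merely a distribution a priori, one should note that $(c)$ forces $\F f$ to have compact support — unless $P$ is constant — hence $f$ and $\F f$ are both Schwartz, so the integration-by-parts manipulations are legitimate; the degenerate case of constant $P$ is handled separately and trivially.

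For $(a)\Rightarrow(c)$, the strategy is contrapositive-flavored: suppose $R(P,\F f)>R$, so there is a point $\lambda_0\in\supp\F f$ with $|P(i\lambda_0)|>R$, and pick $R'$ with $R<R'<|P(i\lambda_0)|$; I must derive a lower bound on $\|(1+|x|)^{d+1}P(\partial)^n f\|_\infty$ that grows at least like $(R')^n$, contradicting the boundedness of $\phi(n)\|\cdots\|_\infty$ together with $\liminf\phi(n)^{1/n}\ge 1/R$. The natural device is to test $P(\partial)^n f$ against a fixed Schwartz function $g$ whose Fourier transform is a bump concentrated near $\lambda_0$ inside the region $\{|P(i\lambda)|\ge R'\}$ and meeting $\supp\F f$; then $\langle P(\partial)^n f,g\rangle$ (suitably normalized) equals $\int P(i\lambda)^n\F f(\lambda)\,\overline{\F g(\lambda)}\,d\lambda$-type expression, whose modulus we want to bound below by $c(R')^n$. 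The hard part will be extracting this lower bound, because $P(i\lambda)^n$ oscillates in argument and cancellation could in principle kill the integral; the cleanest fix is to choose the test function's Fourier support small enough that $\arg P(i\lambda)$ varies by less than, say, $\pi/2$ over it (possible since $P$ is continuous and nonvanishing there), so no cancellation occurs and the triangle inequality from below applies — this is the step I expect to require the most care. Finally, $|\langle P(\partial)^n f,g\rangle|\le \|(1+|x|)^{d+1}P(\partial)^n f\|_\infty\cdot\|(1+|x|)^{-(d+1)}g\|_1$ and the latter $L^1$-norm is finite and fixed, so the desired lower bound on the sup-norm follows, completing the contradiction.
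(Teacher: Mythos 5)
Your implications (c)$\Rightarrow$(b) and (b)$\Rightarrow$(a) are essentially the paper's own argument (the paper uses directional derivatives $\partial_y^N$ for a basis of directions $y$ rather than multi-indices $x^\alpha$, which is cosmetic), and they are correct. Note only that your parenthetical claim that (c) forces $\F f$ to have compact support for non-constant $P$ is false --- the sublevel set $\{\lambda\in\R^d : |P(i\lambda)|\le R\}$ need not be compact (take $P(x_1,x_2)=x_1x_2$) --- but this is harmless, since $f\in\S(\R^d)$ already guarantees $\F f\in\S(\R^d)$ and legitimizes all your manipulations.

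The genuine gap is in (a)$\Rightarrow$(c). You correctly identify the danger of cancellation in the pairing of $P(i\lambda)^n\F f(\lambda)$ against a bump, but your proposed fix --- shrinking the bump so that $\arg P(i\lambda)$ varies by less than $\pi/2$ over it --- does not repair it: the phase of $P(i\lambda)^n$ is $n\arg P(i\lambda)$, so its variation over the bump is $n$ times that of $\arg P(i\lambda)$ and exceeds $\pi/2$ for all large $n$, which is precisely the regime you need. Moreover, even with the phase of $P(i\lambda)^n$ under control, $\F f$ itself may oscillate and produce cancellation. The standard way out, used in the paper's Proposition~\ref{prop:Lpkeyprop} for the harder $L^p$ case, is to put the factor $P(i\lambda)^{-n}$ into the test function: choose a bump $\psi$ near $\lambda_0$ with $\langle\F f,\psi\rangle\ne 0$, set $\psi_n=P(i\lambda)^{-n}\psi$, and observe $\langle\F f,\psi\rangle=\langle P(\partial)^nf,\F\psi_n\rangle$; the fixed nonzero left-hand side is the lower bound, and only an \emph{upper} bound on $\|\F\psi_n\|_1$ of order $n^{2M}(|P(i\lambda_0)|-\varepsilon)^{-n}$ is required, so no lower bound on an oscillatory integral ever arises. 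In the Schwartz setting the paper does something even simpler: since $\F f$ is continuous, it suffices to show $\F f(\lambda_0)=0$ pointwise wherever $|P(i\lambda_0)|\ge R+\varepsilon$, and this follows at once from $\F f(\lambda_0)=P(i\lambda_0)^{-n}\F(P(\partial)^nf)(\lambda_0)$, the bound $|\F(P(\partial)^nf)(\lambda_0)|\le(2\pi)^{-d/2}\|(1+|x|)^{d+1}P(\partial)^nf\|_\infty\int_{\R^d}(1+|x|)^{-d-1}\,dx$, and letting $n\to\infty$ using the hypotheses in (a). You should replace your step with one of these two devices.
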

\begin{proof}
First we prove that (a) implies (c). Assume that $|P(i\lambda_0)| \ge R+ \varepsilon$ for some $\lambda _0\in \R^d$ and $\varepsilon >0$.
The implication will follow once we show that $\F f(\lambda _0)=0$.
For $n\in \N$, one has
\begin{equation*}
\F f  (\lambda _0) =\frac{1} {(2\pi)^{d/2}P(i\lambda_0)^n} \int _{\R^d} (P(\partial)^n f)(x) e^{-i\lambda_0\cdot x}\, dx,
\end{equation*}
hence, for all $n\in \N$ large enough (so that $\phi(n)\neq 0$),
\begin{align*}
|\F f  (\lambda _0) |&\le\frac {1}{(2\pi)^{d/2}|P(i\lambda_0)|^n}
\int _{\R^d}(1+|x|)^{-d-1} |(1+|x|)^{d+1} (P(\partial)^n f)(x) |\, dx\\
&\le C \phi (n) ^{-1}
|P(i\lambda_0)|^{-n},
\end{align*}
for some positive constant $C$. So
\begin{equation*}
|\F f  (\lambda _0) |\le C\limsup_{n\to \infty}(  \phi (n) ^{-1} |P(i\lambda_0)|^{-n}) \le C \limsup_{n\to \infty}
\frac {(R+\varepsilon/2) ^n}{(R+\varepsilon)^n}  =  0.
\end{equation*}

Now we prove that (c) implies that
\begin{equation}\label{equivalentestimates}
\sup_{n\in\N} R^{-n}n^{-N} \Vert |x|^N P(\partial) f \Vert_\infty <\infty
\end{equation}
for each $N\in\N_0$, which is easily seen to be equivalent to (b). In order to establish \eqref{equivalentestimates}, we will first show that
\begin{equation}\label{preparatoryestimates}
\sup_{n\in\N}R^{-n}n^{-N}\Vert (y\cdot x)^N P(\partial)^n f\Vert_\infty <\infty
\end{equation}
for each $N\in\N_0$ and $y\in\R^d$ (where the supremum norm is taken of the function of $x$).
Fix such $N$ and $y$. Then, for all $n\in\N$ and all $x\in\R^d$, we have
\begin{align}\label{rewrittenasintegral}
(y\cdot x)^N P(\partial)^n f(x)&=\frac{i^N}{(2\pi)^{d/2}}\int_{\R^d}\partial_y^N\left[P(i\lambda)^n\F f(\lambda)\right]e^{i\lambda\cdot x}\,d\lambda\\
&=\frac{i^N}{(2\pi)^{d/2}}\int_{\R^d}\left ( \sum_{k=0}^N \binom{N}{k}\partial_y^k P(i\lambda)^n\,\partial_y^{N-k} \F f(\lambda)\right )\,e^{i\lambda\cdot x}\,d\lambda.\notag
\end{align}
An induction with respect to $k$ shows that
\begin{align*}
\partial_y^k P(i\lambda)^n&=\sum_{l=0}^k n(n-1)\cdots(n-l+1)P_{l,k}(\lambda)P(i\lambda)^{n-l}\\
&=P(i\lambda)^{n-k}\sum_{l=0}^k n(n-1)\cdots(n-l+1)P_{l,k}(\lambda)P(i\lambda)^{k-l},
\end{align*}
for some polynomials $P_{l,k}$ independent of $n$. Here we have assumed that $n>N$,
so that $n-k>0$ for all $k$ that occur in the summation, hence $P(i\lambda)^{n-k}$ is
defined. This implies that, for $k$ occurring in the summation, $n>N$, and $\lambda\in\sup \F f$,
\begin{align*}
|\partial_y^k P(i\lambda)^n|&\leq R(P,\F f)^{n-k}n^N\sum_{l=0}^k |P_{l,k}(\lambda)P(i\lambda)^{k-l}|\\
&\leq R^{n}n^N\sum_{l=0}^k R^{-k} |P_{l,k}(\lambda)P(i\lambda)^{k-l}|
\end{align*}
and using this in \eqref{rewrittenasintegral} shows that \eqref{preparatoryestimates}
holds with the supremum taken over $n>N$, which implies \eqref{preparatoryestimates} itself.

To conclude the proof that (c) implies (b), we choose a basis $y_1,\ldots,y_d$ of $\R^d$.
Then there exists $C>0$, such that $|x|\leq C\max_j|y_j\cdot x|$, for all $x\in\R^d$,
hence $|x|^N\leq C^N\sum_{j=1}^d |y_j\cdot x|^N$. Using this and applying \eqref{preparatoryestimates}
to the $y_j$ shows that \eqref{equivalentestimates} holds.

Finally, to prove that (b) implies (a), we simply take $\phi (n) = R^{-n}n^{-d-1}$.
\end{proof}

\begin{remark}
Controlling $P(\partial)^n f$ as in (a) or (b) implies that
$\supp \F f\subseteq\{\lambda\in\R^d : |P(i\lambda)|\leq R\}$. As the example $P(x_1,x_2) = x_1x_2$ on $\R^2$ shows,
this latter set can be non-compact and non-convex. This illustrates the point, mentioned in the introduction, that
the theoretical limitations to compact and convex sets which seems to exist for complex Paley--Wiener theorems, are
not present in the real case.
\end{remark}

We will now study the $L^p$-case, for which we need the following key proposition.

\begin{proposition}\label{prop:Lpkeyprop}
Let $P$ be a polynomial and $1\le p\le \infty$. Suppose $P(\partial)^n f\in L^p(\R^d)$, for all $n\in \N_0$.
Then in the extended positive real numbers
\begin{equation} \label{liminflimit}
\liminf _{n\to \infty}\| P(\partial )^n f\| _p ^{1/n} \ge R(P,\F f).
\end{equation}
\end{proposition}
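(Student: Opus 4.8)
The plan is to mimic the $(a)\Rightarrow(c)$ argument of Theorem~\ref{thm:realPWSchwartz}, but now in the $L^p$ setting, where we must recover pointwise information about $\F f$ (a distribution) from $L^p$-control of $P(\partial)^n f$. If $R(P,\F f)=0$ there is nothing to prove, so assume $R(P,\F f)>0$ and fix any $R_0$ with $0<R_0<R(P,\F f)$; it suffices to show $\liminf_n\|P(\partial)^nf\|_p^{1/n}\ge R_0$. By definition of $R(P,\F f)$ there is a point $\lambda_0\in\supp\F f$ with $|P(i\lambda_0)|>R_0$, and we wish to conclude that $\|P(\partial)^nf\|_p^{1/n}$ cannot stay below $R_0$, for otherwise $\F f$ would vanish near $\lambda_0$.

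The key step is to localize near $\lambda_0$ by pairing $\F(P(\partial)^nf)=P(i\cdot)^n\,\F f$ against a fixed test function concentrated at $\lambda_0$. Concretely, pick $\psi\in\S(\R^d)$ with $\widehat\psi$ (equivalently, $\F^{-1}\psi$) supported in a small ball $B(\lambda_0,\delta)$ chosen so that $|P(i\lambda)|\ge R_1$ on $B(\lambda_0,\delta)$ for some $R_1$ with $R_0<R_1\le|P(i\lambda_0)|$, and with $\langle\F f,\psi\rangle\ne0$ --- such a $\psi$ exists precisely because $\lambda_0\in\supp\F f$. Now write $Q_n(i\lambda)=P(i\lambda)^{-n}$ on the support of $\psi$ and observe that $\psi/P(i\cdot)^n$ is again a Schwartz function whose inverse Fourier transform $g_n:=\F^{-1}(\psi\cdot P(i\cdot)^{-n})$ lies in $L^{p'}(\R^d)$ (the conjugate exponent). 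Then, using that $\F$ intertwines $P(\partial)$ with multiplication by $P(i\cdot)$ as in~\eqref{intertwining},
\[
\langle\F f,\psi\rangle=\bigl\langle\F f,\ P(i\cdot)^n\cdot\tfrac{\psi}{P(i\cdot)^n}\bigr\rangle=\bigl\langle P(i\cdot)^n\F f,\ \tfrac{\psi}{P(i\cdot)^n}\bigr\rangle=\bigl\langle \F(P(\partial)^nf),\ \tfrac{\psi}{P(i\cdot)^n}\bigr\rangle=\bigl\langle P(\partial)^nf,\ g_n\bigr\rangle,
\]
so that, by H\"older's inequality, $|\langle\F f,\psi\rangle|\le\|P(\partial)^nf\|_p\,\|g_n\|_{p'}$ for every $n$.

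It remains to bound $\|g_n\|_{p'}$ from above at an exponential rate no worse than $R_1^{-n}$. This is the main obstacle: one needs a Schwartz-seminorm estimate showing that $\psi\cdot P(i\cdot)^{-n}$, and hence its Fourier transform, grows at most like $R_1^{-n}$ times a polynomial in $n$. Since $\psi$ is supported in $B(\lambda_0,\delta)$ where $|P(i\lambda)|\ge R_1>0$, repeated application of the Leibniz and quotient rules gives $|\partial^\alpha(\psi(\lambda)P(i\lambda)^{-n})|\le C_\alpha\,n^{|\alpha|}R_1^{-n}$ uniformly on $\R^d$, with $C_\alpha$ independent of $n$; controlling finitely many such seminorms controls $\|g_n\|_{p'}$ (via, say, the $\|(1+|x|)^{d+1}g_n\|_\infty$ bound used already in the proof of Theorem~\ref{thm:realPWSchwartz}), yielding $\|g_n\|_{p'}\le C\,n^{N}R_1^{-n}$ for a suitable $N$. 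Combining, $0<|\langle\F f,\psi\rangle|\le\|P(\partial)^nf\|_p\cdot C n^N R_1^{-n}$, whence $\|P(\partial)^nf\|_p\ge c\,n^{-N}R_1^{n}$ and therefore $\liminf_n\|P(\partial)^nf\|_p^{1/n}\ge R_1>R_0$. Since $R_0<R(P,\F f)$ was arbitrary, \eqref{liminflimit} follows.
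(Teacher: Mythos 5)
Your proposal is correct and follows essentially the same route as the paper's proof: localize with a compactly supported test function $\psi$ near a point $\lambda_0\in\supp\F f$ where $|P(i\lambda)|$ stays above $R_1$, with $\langle\F f,\psi\rangle\neq 0$; divide by $P(i\lambda)^n$, pair against $P(\partial)^n f$ via H\"older; and bound the conjugate-exponent norm of $\F(\psi P(i\cdot)^{-n})$ by $Cn^N R_1^{-n}$ through Leibniz/quotient-rule estimates combined with a weight $(1+|x|)^M$ obtained by integrating by parts. The only blemishes are notational: the opening sentence asks for $\widehat\psi$ to be supported in $B(\lambda_0,\delta)$ when it is $\psi$ itself that must be (as your later estimates correctly assume), and the final pairing should produce $\F\psi_n$ rather than $\F^{-1}\psi_n$ with the paper's duality convention, which is harmless since the two have equal $L^{p'}$ norms.
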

\begin{proof}
Fix $\lambda _0\in \supp \F f$. We can assume that $P(i\lambda _0) \ne 0$.
We will show that
\begin{equation}\label{mustshow2}
\liminf _{n\to \infty}\| P(\partial )^n f\| _p ^{1/n} \ge |P(i\lambda _0)| - \varepsilon,
\end{equation}
for any (fixed) $\varepsilon >0$ such that
$0 < 2\varepsilon < |P(i\lambda _0)|$.

To this end, choose and fix $\psi \in C^\infty _c (\R^d)$ such that
$\langle \F f, \psi \rangle\ne 0$, and
\begin{equation}\label{compact}
\supp \psi \subset \{\lambda \in \R^d : |P(i\lambda _0)| - \varepsilon < |P(i\lambda )|  < |P(i\lambda _0)| + \varepsilon\}.
\end{equation}
This is possible since $\lambda _0$ belongs to the set in (\ref{compact}), and $\lambda _0 \in \supp \F f$.
For $n\in \N_0$, let $\psi _n (\lambda) = P(i\lambda)^{-n} \psi (\lambda)$.
Let $\frac 1p +\frac 1q=1$, and fix $M\in \N$ such that $(1+|x|^2)^{-M}\in L^q (\R^d)$.
We note that
\begin{equation}\label{intexpression}
(1+|x|^2)^{M} \F \psi _n (x) = \frac{1}{(2\pi)^{d/2}} \int _{\R^d} (1- \Delta)^M \{ P(i\lambda)^{-n} \psi (\lambda)\}e^{-i\lambda\cdot x} \,d\lambda,
\end{equation}
where $\Delta$ is the Laplacian.
When expanding $(1- \Delta)^M \{ P(i\lambda)^{-n} \psi (\lambda)\}$, it can be written as a finite sum
\begin{equation*}
\sum_{\alpha,\beta} c_{\alpha,\beta} (D^\alpha (P(i\lambda)^{-n})) (D^\beta \psi),
\end{equation*}
for coefficients $c_{\alpha,\beta}$, and
composites $D^\alpha ,D^\beta$ of partial derivatives of order $|\alpha|,|\beta|\le2 M$.
Neither coefficients nor the differential operators depend on $n$.

Let $n>2M$. Using induction with respect to $|\alpha|$, it is seen that, if $P(i\lambda)\neq 0$,
\begin{align*}
D^\alpha (P (i\lambda)^{-n})&= \sum_{k=0}^{|\alpha|} n(n+1)\cdots (n+k-1) P_{k,\alpha}(\lambda) P(i\lambda)^{-n-k}\\
&=P(i\lambda)^{-n-|\alpha|}\sum_{k=0}^{|\alpha|} n(n+1)\cdots (n+k-1) P_{k,\alpha}(\lambda)P(i\lambda)^{|\alpha|-k},
\end{align*}
where the $P_{k,\alpha}$ are polynomials independent of $n$.
For all $\alpha$ occurring in the expansion one has $|\alpha|\leq 2M$, hence for each occurring $\alpha$
there exists a positive $C_\alpha$, such that
\begin{equation*}
|D^\alpha (P (i\lambda)^{-n})|\le C_\alpha n^{2M}   (|P(i\lambda_0)| -\varepsilon)^{-n-|\alpha|}
\end{equation*}
for all $n> 2M$, and all $\lambda \in \supp \psi$.
Therefore there exists a positive $C_1$ such that
\begin{equation*}
|(1- \Delta)^M \{ P(i\lambda)^{-n} \psi (\lambda)\}| \le C_1 n^{2M}   (|P(i\lambda_0)| -\varepsilon)^{-n}
\end{equation*}
for all $n>2M$, and all $\lambda \in \R^d$.
Using this in (\ref{intexpression}), we conclude that there exists a positive constant $C_2$,
such that
\begin{equation*}
|(1+|x|^2)^M \F \psi _n (x)| \le  C_2 n^{2M}  (|P(i\lambda_0)| -\varepsilon)^{-n},
\end{equation*}
for all $n> 2M$, and $x\in \R^d$.
It follows from this, that there exists a positive constant $C_3$, such that
\begin{equation*}\|\F \psi _n \|_q \le  C_3 n^{2M}   (|P(i\lambda_0)| -\varepsilon)^{-n},
\end{equation*}
for all $n> 2M$.
Then, since
\begin{equation*}
\langle \F f,\psi\rangle = \langle \F f,P(i\lambda)^n \psi_n \rangle =
\langle P(i\lambda) ^n\F f, \psi_n \rangle =
\langle\F ( P(\partial)^n  f), \psi_n \rangle=
\langle  P(\partial) ^n  f,\F \psi_n \rangle,
\end{equation*}
by (\ref{distfouriertransform}) and (\ref{intertwining}), we use the assumption that $P(\partial)^n f\in L^p(\R^d)$, for
all $n\in \N_0$, and H\"older's inequality, to obtain
\begin{equation*}
|\langle \F f,\psi\rangle| \le \| P(\partial) ^n f\|_p\|\F \psi_n \|_q
\le C_4 n^{2M}   (|P(i\lambda_0)| -\varepsilon)^{-n} \|( P(\partial) ) f\|_p,
\end{equation*}
for all $n>2M$. Since $|\langle \F f,\psi\rangle| >0$, we conclude that (\ref{mustshow2}) holds.
\end{proof}

Combining Proposition~\ref{prop:Lpkeyprop} above with Theorem~\ref{thm:realPWSchwartz} yields two results in an $L^p$-context.
In Section~\ref{sec:perspectives} we will comment on their possible interpretation in local spectral theory. The first result is
for Schwartz functions.

\begin{theorem}\label{thm:limSchwartz}
Let $P$ be a polynomial, $1\le p\le \infty$, and $f\in \S(\R^d)$.
Then in the extended positive real numbers
\begin{equation}
\lim _{n\to \infty}\| P(\partial )^n f\| _p ^{1/n} = R(P,\F f).
\end{equation}
\end{theorem}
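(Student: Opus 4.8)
The plan is to combine the two results just established: Proposition~\ref{prop:Lpkeyprop} furnishes the lower bound, and Theorem~\ref{thm:realPWSchwartz} will be used, via an auxiliary estimate, to obtain the matching upper bound. Note first that since $f\in\S(\R^d)$, the function $P(\partial)^n f$ lies in $\S(\R^d)\subset L^p(\R^d)$ for every $n$ and every $p$, so Proposition~\ref{prop:Lpkeyprop} applies and gives
\begin{equation*}
\liminf_{n\to\infty}\|P(\partial)^n f\|_p^{1/n}\ge R(P,\F f).
\end{equation*}
If $R(P,\F f)=\infty$ this already forces the limit to equal $\infty$, so we may assume $R(P,\F f)<\infty$. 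If $R(P,\F f)=0$, then $P(i\lambda)$ vanishes on $\supp\F f$, so $\F(P(\partial)f)=P(i\lambda)\F f=0$ by \eqref{intertwining}, hence $P(\partial)f=0$ and all higher terms vanish, and the limit is $0$; so we may also assume $R:=R(P,\F f)\in(0,\infty)$.

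For the upper bound, the idea is: for any $\varepsilon>0$, apply the implication (c)$\Rightarrow$(b) of Theorem~\ref{thm:realPWSchwartz} with the value $R+\varepsilon$ in place of $R$ (legitimate since $R(P,\F f)\le R<R+\varepsilon$). Taking $N$ large enough that $(1+|x|)^{-N}\in L^p(\R^d)$ — say $N=d+1$ when $p\ge 1$, or simply choosing $N$ with $Np>d$, and any $N\ge 1$ when $p=\infty$ — the pointwise bound
\begin{equation*}
|P(\partial)^n f(x)|\le C\,n^N (R+\varepsilon)^n (1+|x|)^{-N}
\end{equation*}
integrates (resp. is estimated) to give $\|P(\partial)^n f\|_p\le C'\,n^N (R+\varepsilon)^n$ for all $n$, with $C'$ independent of $n$. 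Hence $\|P(\partial)^n f\|_p^{1/n}\le (C')^{1/n} n^{N/n}(R+\varepsilon)$, and letting $n\to\infty$ yields $\limsup_{n\to\infty}\|P(\partial)^n f\|_p^{1/n}\le R+\varepsilon$. Since $\varepsilon>0$ was arbitrary, $\limsup_{n\to\infty}\|P(\partial)^n f\|_p^{1/n}\le R(P,\F f)$.

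Combining the $\liminf$ and $\limsup$ estimates gives that $\lim_{n\to\infty}\|P(\partial)^n f\|_p^{1/n}$ exists and equals $R(P,\F f)$, completing the proof. There is no genuine obstacle here; the only point requiring a little care is the handling of the degenerate cases $R(P,\F f)\in\{0,\infty\}$ so that statement (c) of Theorem~\ref{thm:realPWSchwartz} — which was stated under the hypothesis $0<R<\infty$ — can be invoked with a finite positive value $R+\varepsilon$, and the choice of $N$ ensuring the polynomial weight $(1+|x|)^{-N}$ is $p$-integrable uniformly in $n$.
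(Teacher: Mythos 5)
Your proposal is correct and follows essentially the same route as the paper: the lower bound from Proposition~\ref{prop:Lpkeyprop}, and the upper bound by applying the implication (c)$\Rightarrow$(b) of Theorem~\ref{thm:realPWSchwartz} and splitting $\Vert P(\partial)^n f\Vert_p\le\Vert(1+|x|)^{-N}\Vert_p\,\Vert(1+|x|)^N P(\partial)^n f\Vert_\infty$. The only (harmless) difference is that you invoke the theorem with $R+\varepsilon$ and let $\varepsilon\to 0$, and you spell out the degenerate cases $R(P,\F f)\in\{0,\infty\}$ explicitly, which the paper dispatches with a one-line reduction.
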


\begin{proof}
In view of Proposition~\ref{prop:Lpkeyprop} it is sufficient to prove that in the extended positive real numbers
\begin{equation}
\limsup _{n\to \infty}\| P(\partial )^n f\| _p ^{1/n} \le R(P,\F f),
\end{equation}
and for this we may, and will, assume that $0< R(P,\F f)< \infty$.
We fix $M\in \N$, so that $(1+|x|)^{-M}\in L^p(\R^d)$.
From Theorem~\ref{thm:realPWSchwartz}, we see that there exists a positive constant $C$, such that
\begin{equation*}
\| (1+ |x|)^{M} P(\partial)^n f\| _\infty \le C n^M R(P,\F f)^n ,
\end{equation*}
for all $n\in \N$, which implies that
\begin{equation*}
\| P(\partial)^n f\| _p \le \| (1+ |x|)^{-M}\| _p\| (1+ |x|)^{N} P(\partial)^n f\| _\infty\le C^\prime n^M R(P,\F f)^n ,
\end{equation*}
for a new positive constant $C^\prime$. This completes the proof.
\end{proof}

The second consequence of the combination of Proposition~\ref{prop:Lpkeyprop} and Theorem~\ref{thm:realPWSchwartz} is valid for a
class of $L^p$-functions. There is overlap with the previous Theorem~\ref{thm:limSchwartz}, but neither statement contains the
other as a special case.

\begin{theorem}\label{thm:limLp}
Let $P$ be a polynomial and $1\le p\le \infty$. Suppose $P(\partial)^n f\in L^p(\R^d)$, for all $n\in \N_0$.
Assume furthermore that either $\F f$ has compact support, or that the set
$\{\lambda \in \R^d : |P(i\lambda)| \le R\}$ is compact for all $R\geq 0$.
Then in the extended
positive real numbers
\begin{equation} \label{characterize}
\lim_{n\to \infty}\| P(\partial )^n f\| _p ^{1/n} = R(P,\F f).
\end{equation}
\end{theorem}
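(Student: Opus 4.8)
The plan is to obtain the inequality ``$\geq$'' directly from Proposition~\ref{prop:Lpkeyprop}, which applies verbatim under the standing hypothesis that $P(\partial)^n f\in L^p(\R^d)$ for all $n$. Thus the entire task is to prove the reverse inequality
\[
\limsup_{n\to\infty}\|P(\partial)^n f\|_p^{1/n}\leq R(P,\F f),
\]
and here the two alternative assumptions on $\F f$ or on the sublevel sets of $|P(i\cdot)|$ should both be used to reduce matters to a situation where $\F f$ has compact support inside a set where $|P(i\lambda)|$ is bounded. We may assume $R:=R(P,\F f)<\infty$, since otherwise there is nothing to prove; and we may assume $f\neq 0$, so $R>0$.

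First I would handle the case where $\F f$ has compact support. The idea is to regularize: pick $\psi\in C^\infty_c(\R^d)$ with $\psi\equiv 1$ on a neighborhood of $\supp\F f$, so that $\F f=\psi\cdot\F f$ as distributions, and pull this back through $\F$ to write $f=g*f$ where $g=\F^{-1}\psi\in\S(\R^d)$. Then $P(\partial)^n f=g*P(\partial)^n f$, but more usefully one wants to move \emph{all} the derivatives onto the Schwartz factor: choose instead $\chi\in C^\infty_c(\R^d)$ equal to $1$ near $\supp\F f$ and set $h_n=\F^{-1}\!\big(P(i\lambda)^n\chi(\lambda)\big)\in\S(\R^d)$; then $\F(P(\partial)^n f)=P(i\lambda)^n\F f=P(i\lambda)^n\chi\cdot\F f=\F h_n\cdot\F f$, so that $P(\partial)^n f=h_n*f$ and hence
\[
\|P(\partial)^n f\|_p\leq \|h_n\|_1\,\|f\|_p .
\]
It remains to show $\limsup_n\|h_n\|_1^{1/n}\leq R$. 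Since $\chi$ is supported in a fixed compact set $K_0$ and $|P(i\lambda)|\leq R$ on $\supp\F f$, one shrinks the support of $\chi$: for any $\varepsilon>0$ one may take $\chi$ supported where $|P(i\lambda)|\leq R+\varepsilon$ (a neighborhood of $\supp\F f$ still intersected with this sublevel set is a neighborhood of $\supp\F f$), keeping $\chi\equiv 1$ near $\supp\F f$. Now estimate $\|h_n\|_1$ by the usual trick: for $M$ large with $(1+|x|)^{-M}\in L^1$, bound $\|(1+|x|)^M h_n\|_\infty$ by $\int|(1-\Delta)^M\{P(i\lambda)^n\chi(\lambda)\}|\,d\lambda$; expanding by Leibniz exactly as in the proof of Proposition~\ref{prop:Lpkeyprop}, each term carries at most $2M$ derivatives off $P(i\lambda)^n$, producing factors $n(n-1)\cdots$ times $P(i\lambda)^{n-k}$ times fixed polynomials, so on $\supp\chi$ this is $O\!\big(n^{2M}(R+\varepsilon)^n\big)$ uniformly. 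Hence $\|h_n\|_1\leq C_\varepsilon n^{2M}(R+\varepsilon)^n$, giving $\limsup_n\|h_n\|_1^{1/n}\leq R+\varepsilon$; let $\varepsilon\downarrow 0$.

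Next, the case where $S_R:=\{\lambda:|P(i\lambda)|\leq R\}$ is compact for every $R\geq 0$. Under the hypothesis $R(P,\F f)=R<\infty$ we have $\supp\F f\subseteq S_R$, which is compact, so $\F f$ automatically has compact support and we are back in the first case. (The point of stating this alternative separately is that it is a condition purely on $P$, easy to check, which forces $\F f$ compactly supported as soon as $R(P,\F f)$ is finite.) Therefore in both cases $\F f$ is compactly supported and the argument above applies, completing the proof.

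The main obstacle, and the step deserving the most care, is the regularization identity $P(\partial)^n f=h_n*f$ together with the uniform-in-$n$ control of $\|h_n\|_1$: one must be careful that cutting the support of $\chi$ down to the sublevel set $\{|P(i\lambda)|\leq R+\varepsilon\}$ is legitimate (it is, since $\supp\F f$ has a neighborhood basis inside any open set containing it, and $\{|P(i\lambda)|< R+\varepsilon\}$ is such an open set once $R(P,\F f)\leq R$ — here one should replace $R$ by $R+\varepsilon/2$ in the sublevel set to get an \emph{open} neighborhood, then take $\chi$ supported in its closure), and that the Leibniz expansion and the factorial bookkeeping go through uniformly, which is precisely the computation already carried out in Proposition~\ref{prop:Lpkeyprop} and can simply be invoked. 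Everything else is the standard Schwartz-convolution estimate $\|h_n*f\|_p\leq\|h_n\|_1\|f\|_p$ recorded in the preliminaries.
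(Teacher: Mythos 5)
Your proof is correct and follows essentially the same route as the paper: the lower bound is quoted from Proposition~\ref{prop:Lpkeyprop}, the upper bound comes from writing $P(\partial)^n f$ as the convolution of $f$ with a Schwartz kernel whose Fourier transform is a cutoff equal to $1$ near $\supp\F f$ and supported in $\{\lambda : |P(i\lambda)|\le R+\varepsilon\}$, and the second alternative is reduced to the first exactly as in the paper; the only difference is that the paper bounds $\Vert P(\partial)^n\psi\Vert_1^{1/n}$ by invoking Theorem~\ref{thm:realPWSchwartz} (via Theorem~\ref{thm:limSchwartz}) where you redo the $(1-\Delta)^M$ Leibniz estimate by hand. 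One tiny inaccuracy: the parenthetical ``$f\neq 0$, so $R>0$'' is false in general (e.g.\ $P(i\cdot)$ may vanish on all of $\supp\F f$), but your argument nowhere uses $R>0$, so this is harmless.
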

\begin{proof}
From Proposition~{\ref{prop:Lpkeyprop}}, we have
\begin{equation}
\liminf_{n\to \infty}\| P(\partial )^n f\| _p ^{1/n} \ge R(P,\F f).
\end{equation}

As to the first case, assume that $\F f$ has compact support.
Fix $\varepsilon >0$. We will prove that
\begin{equation}\label{ineq}
\limsup_{n\to \infty}\| P(\partial )^n f\| _p ^{1/n} \le R(P,\F f)+\varepsilon.
\end{equation}
To this end, we choose a $\psi \in \S(\R^d)$, such that $\F \psi =1$ on $\supp \F f$, and $|P(i\lambda)| < R(P,\F f) +\varepsilon$,
for all $\lambda\in\supp \F \psi$. In particular, $R(P, \F \psi) < R(P,\F f) +\varepsilon$.
By the compactness of $\supp \F f$, this is indeed possible.
Since $\F (f* \psi) = \F f\cdot \F \psi = \F f$, we see that
$f*\psi = f$, hence
\begin{equation*}
\| P(\partial) ^n f\| _p = \| P(\partial)^n  (f* \psi)\|_p = \| f* P(\partial)^n   \psi\|_p \le \|f\|_p \| P (\partial)^n   \psi\|_1.
\end{equation*}
Then, as a consequence of Theorem~\ref{thm:limSchwartz} (or, given the proof of that result, actually already as a consequence
of Theorem~\ref{thm:realPWSchwartz}),
\begin{equation*}
\limsup_{n\to \infty} \| P(\partial) ^n f\| _p^{1/n}
\le \limsup_{n\to \infty} \| P (\partial)^n   \psi\|_1^{1/n}\le R(P,\F \psi) < R(P,\F f)+\varepsilon,
\end{equation*}
as desired.

As to the second case, assume that the set $\{\lambda \in \R^d : |P(i\lambda)| \le R\}$ is compact for all $R>0$.
If $R(P,\F f) < \infty$, then $\F f$ has compact support and the result follows from the first case.
If $R(P,\F f) = \infty$, the results follows from Proposition~\ref{prop:Lpkeyprop}.
\end{proof}

In particular, if one assumes that $\Delta ^n f\in L^p(\R^d)$, for all $n\in \N_0$, and $0\leq R <\infty$,
then $\lim_{n\to \infty}\| \Delta ^n f\| _p ^{1/n}\le R$, if, and only if, $\F f$ has support in the ball of radius $R$ around the origin.

For $p=2$, we can remove the compactness restrictions in Theorem~\ref{thm:limLp} by using the Plancherel theorem.
This is one of the two applications of the Plancherel theorem in the present paper, the other one
being in the proof of Theorem~\ref{prop:complexPWL2}.

\begin{theorem}\label{thm:limL2}
Let $P$ be a polynomial, and suppose $P(\partial)^n f\in L^2(\R^d)$, for all $n\in \N_0$. Then in the extended positive real numbers
\begin{equation*}
\lim_{n\to \infty}\| P(\partial )^n f\| _2 ^{1/n} = R(P,\F f).
\end{equation*}
\end{theorem}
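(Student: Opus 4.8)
The plan is to use the Plancherel theorem to reduce everything to a clean statement about the function $P(i\lambda)^n \F f(\lambda)$ in $L^2$. By Proposition~\ref{prop:Lpkeyprop} we already have the lower bound $\liminf_{n\to\infty}\| P(\partial)^n f\|_2^{1/n} \ge R(P,\F f)$, so only the reverse inequality
\begin{equation*}
\limsup_{n\to\infty}\| P(\partial)^n f\|_2^{1/n} \le R(P,\F f)
\end{equation*}
needs proof, and we may assume $R(P,\F f) < \infty$ (otherwise there is nothing to show). Write $R = R(P,\F f)$. The key observation is that since $P(\partial)^n f \in L^2(\R^d)$ for all $n$, the intertwining relation \eqref{intertwining} gives $\F(P(\partial)^n f) = P(i\lambda)^n \F f$, and by Plancherel this is an $L^2$-function with $\| P(\partial)^n f\|_2 = \| P(i\lambda)^n \F f\|_2$.

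First I would argue that $R < \infty$ forces $\supp\F f \subseteq \{\lambda : |P(i\lambda)| \le R\}$, so that $|P(i\lambda)| \le R$ holds $\F f$-almost everywhere on the support, and hence $|P(i\lambda)^n \F f(\lambda)| \le R^n |\F f(\lambda)|$ for (Lebesgue-)almost all $\lambda$. This is immediate from the definition of $R(P,\F f)$ as a supremum over the support together with the fact that $\F f \in L^2$ is an honest function vanishing a.e.\ off its support. Taking $L^2$-norms then yields
\begin{equation*}
\| P(\partial)^n f\|_2 = \| P(i\lambda)^n \F f\|_2 \le R^n \|\F f\|_2 = R^n \| f\|_2 \quad \text{(if } f \in L^2\text{)},
\end{equation*}
and more generally $\| P(i\lambda)^n \F f\|_2 \le R^n \|\F f\|_2$ whenever $\F f \in L^2$. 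Taking $n$-th roots and letting $n\to\infty$ gives $\limsup_{n\to\infty}\| P(\partial)^n f\|_2^{1/n} \le R$, which combined with the lower bound completes the proof.

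There is one subtlety to address: a priori we only know $P(\partial)^n f \in L^2$ for each $n$, not that $f$ itself is in $L^2$ — although taking $n=0$ does give $f \in L^2$, so in fact $\F f \in L^2$ and the argument above is literally valid with $\|\F f\|_2 = \|f\|_2$. So the estimate $\| P(i\lambda)^n \F f\|_2 \le R^n \|\F f\|_2$ is exactly what is needed. The only point requiring a word of care is the measurability/support argument: one should note that $\F f$, being in $L^2$, agrees a.e.\ with a function supported in the closed set $\supp \F f$, on which $|P(i\lambda)| \le R$ by definition of $R(P,\F f)$; hence the pointwise bound $|P(i\lambda)^n \F f(\lambda)| \le R^n|\F f(\lambda)|$ holds a.e.

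I do not expect a genuine obstacle here — this is the easiest of the four $L^p$ theorems precisely because Plancherel converts the differential operator into honest pointwise multiplication on $L^2(\R^d)$ and removes the need for the compactness hypotheses of Theorem~\ref{thm:limLp} or the Schwartz-class machinery of Theorem~\ref{thm:limSchwartz}. The mild care needed is just the passage between the distributional support of $\F f$ and pointwise a.e.\ bounds on the $L^2$-representative, which is routine.
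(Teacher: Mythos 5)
Your proof is correct and follows essentially the same route as the paper: both arguments rest on the Plancherel theorem together with the pointwise bound $|P(i\lambda)^n\,\F f(\lambda)|\le R(P,\F f)^n|\F f(\lambda)|$ almost everywhere, the paper merely phrasing the resulting $L^2$-estimate through the duality pairing $\sup_{\Vert\psi\Vert_2=1}|\langle P(\partial)^nf,\psi\rangle|$ rather than invoking the isometry $\Vert P(\partial)^nf\Vert_2=\Vert P(i\lambda)^n\F f\Vert_2$ directly. Your explicit remark that the $L^2$-representative of $\F f$ vanishes a.e.\ off its distributional support is exactly the point the paper uses implicitly, so nothing is missing.
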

\begin{proof}
By Proposition~{\ref{prop:Lpkeyprop}},
we only need to show that
\begin{equation}\label{L2ineq}
\limsup_{n\to \infty}\| P(\partial )^n f\| _2 ^{1/n} \le R(P,\F f).
\end{equation}
We may assume that $R(P,\F f) <\infty$.
For all $\psi \in \S(\R^d)$, we have
\begin{equation*}
\langle P(\partial )^n f,\psi\rangle=\langle P(\partial )^n f,\F \F^{-1}\psi\rangle=
\langle \F (P(\partial )^n f), \F^{-1}\psi\rangle=
\langle  P(i\lambda)^n \F f, \F^{-1}\psi\rangle,
\end{equation*}
so using the Plancherel theorem in the final step we obtain that
\begin{align*}
\left |\langle P(\partial )^n f,\psi\rangle\right |&= \left |\int _{\R ^d}
P(i\lambda)^n (\F f (\lambda))(  \F^{-1}\psi(\lambda))\, d\lambda \right |\\
&\le R(P,\F f)^n \int _{\R ^d}  |\F f (\lambda) \F^{-1}\psi(\lambda)|\,d\lambda\\
&\le  R(P,\F f)^n \| \F f\| _2 \| \F ^{-1} \psi \| _2\\
&= R(P,\F f)^n \|  f\| _2 \|  \psi \| _2.\\
\end{align*}
Since
\begin{equation*}
\| P(\partial)^n f \| _2 = \sup _{\psi \in \S(\R^d),\,\|\psi\|_2 = 1}
\left |\langle P(\partial )^n f,\psi\rangle\right |,
\end{equation*}
we conclude that
\begin{equation*}
\| P(\partial)^n f \| _2 \le R(P,\F f)^n\|  f\| _2,
\end{equation*}
and (\ref{L2ineq}) follows.
\end{proof}

If the Fourier transform $\F f$ of a function $f\in L^p(\R^d)$ is compact, then controlling $\Vert P(\partial)^n f\Vert_p$
for sufficiently many polynomials $P$ enables one to find the precise support of $\F f$.
This is expressed by part (c) and (d) of Theorem~\ref{thm:reconstructsupport} below, a key ingredient for which is the following.

\begin{lemma}\label{lem:SW}
Let $K$ be a non-empty compact subset of $\R^d$. Then
there exists a set $\mathcal P_K$ of polynomials such that
\[
K=\bigcap_{P\in\mathcal P_K}\{\lambda\in\R^d : |P(i\lambda)|\le\max_{\lambda\in K}|P(i\lambda)|\}.
\]
\end{lemma}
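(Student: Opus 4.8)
The plan is to realize $K$ as an intersection of superlevel sets $\{\lambda : |P(i\lambda)|\le\max_K|P(i\cdot)|\}$ by choosing, for each point $\mu\notin K$, a single polynomial $P_\mu$ that separates $\mu$ from $K$ in the required sense; then $\mathcal P_K=\{P_\mu : \mu\notin K\}$ does the job, since every $\mu\notin K$ is excluded by its own $P_\mu$ while every $\lambda\in K$ lies in all the superlevel sets by construction of the bound as a maximum over $K$. So the heart of the matter is: given compact $K$ and a point $\mu\in\R^d\setminus K$, produce a polynomial $P$ with $|P(i\mu)|>\max_{\lambda\in K}|P(i\lambda)|$.

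First I would reduce to producing a \emph{real} polynomial $Q$ on $\R^d$ with $Q(\mu)>\max_{\lambda\in K}|Q(\lambda)|$; from such a $Q$ one gets the statement by writing $Q(\lambda)=P(i\lambda)$ for the polynomial $P(z):=Q(z/i)=Q(-iz)$, which is a genuine (complex-coefficient) polynomial in $z$, and then $|P(i\mu)|=|Q(\mu)|=Q(\mu)$ and $|P(i\lambda)|=|Q(\lambda)|$ for $\lambda\in\R^d$. To build $Q$: since $K$ is compact and $\mu\notin K$, there is $\delta>0$ with $|\lambda-\mu|\ge\delta$ for all $\lambda\in K$, and $K$ is contained in some ball $B(0,\rho)$. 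Consider $g(\lambda)=1-c|\lambda-\mu|^2$ for a small constant $c>0$: this is a real polynomial with $g(\mu)=1$, and on $K$ one has $|\lambda-\mu|^2\ge\delta^2$ while $|\lambda-\mu|^2\le(\rho+|\mu|)^2$, so choosing $c$ small enough forces $0<g(\lambda)<1=g(\mu)$ on $K$ but not necessarily $|g(\lambda)|$ small. To kill the absolute value I would instead pass to a high power: replace $g$ by $Q=g^{2m}$ for large $m$ — still a real polynomial, still $Q(\mu)=1$ — wait, this does not shrink $g(\lambda)$ which may be close to $1$. The clean fix is to take $c$ with $c\,\delta^2>2$, so that on $K$ we have $g(\lambda)=1-c|\lambda-\mu|^2\le 1-c\delta^2<-1$, hence $|g(\lambda)|=c|\lambda-\mu|^2-1\le c(\rho+|\mu|)^2-1=:M$; meanwhile $g(\mu)=1$. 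Now $g$ itself need not separate, but $Q:=g^{2m}$ is real, $Q(\mu)=1$, and I'd like $Q(\lambda)$ large on $K$ — again wrong direction. Let me invert the roles: take $c$ \emph{small} so that $g>0$ on the whole ball $B(0,\rho)\cup\{\mu\}$ but $g(\mu)=1>g(\lambda)$ for $\lambda\in K$ with a uniform gap $g(\lambda)\le 1-c\delta^2<1$; since $g>0$ there, $|g(\lambda)|=g(\lambda)$ on $K$, so already $Q:=g$ works: $|Q(i^{-1}\cdot)|$... more simply $Q(\mu)=1>1-c\delta^2\ge\max_{\lambda\in K}g(\lambda)=\max_{\lambda\in K}|g(\lambda)|$. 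That is the separating real polynomial; raising to a power is unnecessary.

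I expect the only subtlety — and it is minor — is making sure $g$ is nonnegative on $K$ so that $|g|=g$ there: this is exactly what choosing $c\le(\rho+|\mu|)^{-2}$ guarantees, since then $c|\lambda-\mu|^2\le1$ for all $\lambda\in B(0,\rho)$. One must also note that $\mathcal P_K$ so defined is a set of polynomials in the sense required (complex-coefficient polynomials $P$ with $P(i\lambda)=g_\mu(\lambda)$), that the intersection over $\mathcal P_K$ indeed contains $K$ (immediate from $|P_\mu(i\lambda)|\le\max_{K}|P_\mu(i\cdot)|$ for $\lambda\in K$, which holds by definition of the right-hand side as that very maximum), and is contained in $K$ (each $\mu\notin K$ fails the inequality for $P=P_\mu$). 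The main obstacle, such as it is, is purely the bookkeeping of constants in the elementary separation step; there is no real difficulty, and in particular no need for Stone–Weierstrass or any approximation machinery.
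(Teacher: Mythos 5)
Your argument is correct, but it takes a genuinely different route from the paper. The paper disposes of the lemma in one line by appealing to the Stone--Weierstra{\ss} theorem: the set of \emph{all} polynomials works, since any point $\mu\notin K$ can be separated from $K$ by a continuous function and hence, after uniform approximation on a large ball, by a polynomial. You instead give an explicit point-separation: for each $\mu\notin K$ the quadratic $g_\mu(\lambda)=1-c|\lambda-\mu|^2$, with $c>0$ small enough that $g_\mu\ge 0$ on a ball containing $K$, satisfies $g_\mu(\mu)=1>1-c\,\mathrm{dist}(\mu,K)^2\ge\max_{\lambda\in K}|g_\mu(\lambda)|$, and the substitution $P_\mu(z)=g_\mu(-iz)$ turns this into the required complex-coefficient polynomial with $P_\mu(i\lambda)=g_\mu(\lambda)$. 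The bookkeeping is right (the trivial inclusion $K\subseteq\bigcap$ and the exclusion of each $\mu$ by its own $P_\mu$), and since any superset of a working $\mathcal P_K$ still works, your construction also recovers the paper's statement that the set of all polynomials suffices, which is what part (d) of Theorem~\ref{thm:reconstructsupport} uses. What your approach buys is constructiveness and a degree bound: $\mathcal P_K$ consists entirely of polynomials of degree two, one per excluded point, which is very much in the spirit of the paper's remark following the lemma that small, geometrically adapted families (a single quadratic for a ball, finitely many linear polynomials for a polytope) are often enough; what the paper's approach buys is brevity. The only cosmetic blemish is the visible false starts with high powers $g^{2m}$ in the middle of your write-up, which should be excised from a final version.
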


Indeed, as a consequence of the Stone-Weierstra{\ss} theorem, taking the set of all polynomials for $\mathcal P_K$ certainly works.
However, depending on the geometry of $K$ -- on which one may sometimes have a priori information -- much smaller sets may be sufficient.
For a ball, a single polynomial of degree two will do, and for convex hulls of finitely many points, finitely many polynomials of degree one
suffice. It is with this in mind that part (c) in Theorem~\ref{thm:reconstructsupport} and part (b)
in Theorem~\ref{thm:reconstructsupportcompdist} should be read.

\begin{theorem}\label{thm:reconstructsupport}
Let $1\le p\le \infty$, and assume the Fourier transform $\F f$ of $f\in L^p(\R^d)$ has compact support.
Let $K$ be a non-empty compact subset of $\R^d$. Then
\begin{enumerate}
\item[(a)] $P(\partial)f$ is in $L^p(\R^d)$, for all polynomials $P$;
\item[(b)] For any set $\mathcal P_K$ determining $K$ as in Lemma~\ref{lem:SW}, $\supp \F f\subset K$ if, and only if,
\[
\lim_{n\to\infty}\Vert P(\partial)^n f\Vert_p^{1/n}\leq\max_{\lambda\in K}|P(i\lambda)|,
\]
for all $P$ in $\mathcal P_K$.
\item [(c)] For any set $\mathcal P_{\supp \F f}$ determining $\supp \F f$ as in Lemma~\ref{lem:SW},
$\lambda\in\R^d$ is in $\supp\F f$ if, and only if,
\[
|P(i\lambda)|\leq \lim_{n\to\infty}\Vert P(\partial)^n f\Vert_p^{1/n},
\]
for all $P\in\mathcal{P}_{\supp\F f}$.
\item[(d)] One can reconstruct $\supp\F f$ as
\[
\supp \F f=\{\lambda\in\R^d : |P(i\lambda)|\leq \lim_{n\to\infty}\Vert P(\partial)^n f\Vert_p^{1/n},\,\textup{for all polynomials }P\}.
\]
\end{enumerate}
\end{theorem}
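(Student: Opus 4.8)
The plan is to deduce everything from part (a) together with Theorem~\ref{thm:limLp} and Lemma~\ref{lem:SW}, so I would prove the parts in the order (a), then the common identity $\lim_{n}\Vert P(\partial)^n f\Vert_p^{1/n}=R(P,\F f)$, then (b) and (c) in parallel, and finally (d) as a special case of (c). For (a): since $\F f$ has compact support, choose $\psi\in\S(\R^d)$ with $\F\psi\equiv 1$ on a neighbourhood of $\supp\F f$. Then $\F(f*\psi)=\F f\cdot\F\psi=\F f$, so $f*\psi=f$, and hence for every polynomial $P$ and every $n\in\N_0$ one has $P(\partial)^n f=P(\partial)^n(f*\psi)=f*P(\partial)^n\psi$. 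Since $P(\partial)^n\psi\in\S(\R^d)\subset L^1(\R^d)$, Young's inequality gives $\Vert P(\partial)^n f\Vert_p\le\Vert f\Vert_p\Vert P(\partial)^n\psi\Vert_1<\infty$. In particular $P(\partial)f\in L^p(\R^d)$, which is (a); but the stronger statement that $P(\partial)^n f\in L^p(\R^d)$ for all $n$ is what makes the rest of the argument run.

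Indeed, this last fact means that the hypotheses of Proposition~\ref{prop:Lpkeyprop} are met and, because $\F f$ has compact support, also those of the first case of Theorem~\ref{thm:limLp}; therefore the limit exists and
\[
\lim_{n\to\infty}\Vert P(\partial)^n f\Vert_p^{1/n}=R(P,\F f)=\max_{\lambda\in\supp\F f}|P(i\lambda)|
\]
for every polynomial $P$, the last equality because $\supp\F f$ is compact and $\lambda\mapsto|P(i\lambda)|$ is continuous. With this identity the remaining parts become purely geometric.

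For (b): if $\supp\F f\subset K$ then $R(P,\F f)\le\max_{\lambda\in K}|P(i\lambda)|$ for every $P$, which by the displayed identity is exactly the stated inequality (for all $P$, a fortiori for $P\in\mathcal P_K$). Conversely, if the inequality holds for all $P\in\mathcal P_K$, then $R(P,\F f)\le\max_{\lambda\in K}|P(i\lambda)|$, i.e. $\supp\F f\subset\{\lambda\in\R^d : |P(i\lambda)|\le\max_{\lambda\in K}|P(i\lambda)|\}$, for each $P\in\mathcal P_K$; intersecting over $\mathcal P_K$ and invoking Lemma~\ref{lem:SW} yields $\supp\F f\subset K$. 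Part (c) is the same argument with $K$ replaced by $\supp\F f$ itself: $\lambda\in\supp\F f$ forces $|P(i\lambda)|\le\max_{\mu\in\supp\F f}|P(i\mu)|=\lim_{n}\Vert P(\partial)^n f\Vert_p^{1/n}$, and conversely the latter inequality for all $P\in\mathcal P_{\supp\F f}$ puts $\lambda$ into $\bigcap_{P\in\mathcal P_{\supp\F f}}\{\mu\in\R^d : |P(i\mu)|\le\max_{\nu\in\supp\F f}|P(i\nu)|\}=\supp\F f$ by Lemma~\ref{lem:SW}. Finally, (d) is (c) applied with $\mathcal P_{\supp\F f}$ taken to be the set of all polynomials, which is a legitimate choice by the Stone--Weierstra{\ss} remark following Lemma~\ref{lem:SW}.

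I do not expect a serious obstacle here; the only points requiring care are checking that part (a) really delivers $P(\partial)^n f\in L^p(\R^d)$ for \emph{every} $n$ (so that Theorem~\ref{thm:limLp} is applicable), that compactness of $\supp\F f$ lets one pass from $\sup$ to $\max$ and makes Lemma~\ref{lem:SW} available, and the trivial case $f=0$, where $\supp\F f=\emptyset$ and all statements hold vacuously once one notes that the constant polynomial $P\equiv 1$ excludes every $\lambda$ from the set in (d).
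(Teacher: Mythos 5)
Your proposal is correct and follows essentially the same route as the paper: part (a) via the convolution identity $P(\partial)^n f = f * P(\partial)^n\psi$ for a Schwartz function $\psi$ whose Fourier transform is $1$ near $\supp\F f$, then Theorem~\ref{thm:limLp} (first case) to identify the limit with $R(P,\F f)$, and Lemma~\ref{lem:SW} for the set-theoretic reconstruction in (b)--(d). Your write-up is somewhat more explicit than the paper's (in particular about why $P(\partial)^n f\in L^p$ for every $n$ and about the degenerate case $f=0$), but there is no substantive difference in approach.
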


The existence of the finite limits in (b) and (c) is guaranteed by (a) and Theorem~\ref{thm:limLp}.

\begin{proof}
Choose $\psi\in C_c^\infty(\R^d)$ which is equal to 1 on $\supp \F f$. If $P$ is a polynomial, then, as distributions,
$P(\partial)f=f*P(\partial)\F^{-1}\psi$, as can be seen by taking Fourier transforms and using \cite[Theorem 30.4]{Tr}.
Since $L_1$ acts on $L_p$ by convolution, (a) follows.
Theorem~\ref{thm:limLp} gives (b) and (c). Since we already noted that the set of all polynomials
will always do for any compact set, (d) follows from (c).
\end{proof}

We now turn to general tempered distributions.

\begin{proposition}\label{prop:realPWtempdist1}
Let $T$ be a distribution of order $N$ with compact support, and let $P$ be a polynomial. Then, for each $R>R(P,T)$,
there exists a constant $C$, such that, for all $n\in\N$ and $x\in\R^d$,
\[
|P(\partial)^n \F^{-1}T(x)|\leq C n^N R^n (1+|x|)^N.
\]
\end{proposition}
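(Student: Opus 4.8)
The plan is to argue directly, writing $P(\partial)^n \F^{-1}T$ as a compactly supported distribution applied to a test function and then exploiting the order $N$ bound. Since $T$ has compact support, say $\supp T \subset B_\rho$, and $T$ has order $N$, there is a constant $C_0$ so that $|\langle T, \phi\rangle| \le C_0 \sum_{|\alpha|\le N} \sup_{B_\rho} |\partial^\alpha \phi|$ for all $\phi \in C^\infty(\R^d)$. For fixed $x\in\R^d$, the value $\F^{-1}T(x)$ is obtained by pairing $T$ against the exponential: up to the normalizing constant $(2\pi)^{-d/2}$, one has $\F^{-1}T(x) = \langle T_\lambda, e^{i\lambda\cdot x}\rangle$, and therefore, using the intertwining relation \eqref{intertwining} in the form $\F(P(\partial)^n \F^{-1}T)(\lambda) = P(i\lambda)^n T(\lambda)$, one gets
\[
P(\partial)^n \F^{-1}T(x) = \frac{1}{(2\pi)^{d/2}}\langle T_\lambda, P(i\lambda)^n e^{i\lambda\cdot x}\rangle .
\]
This is the key identity and it should be justified carefully (it is standard, but worth a line: apply $\F^{-1}$ to $P(i\lambda)^n\F T = \F(P(\partial)^n\F^{-1}T)$ and note that for a compactly supported distribution the inverse Fourier transform is the smooth function $\lambda\mapsto\langle T,\, (2\pi)^{-d/2}e^{i\lambda\cdot\,}\rangle$).

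Next I would feed the test function $\phi_{n,x}(\lambda) = P(i\lambda)^n e^{i\lambda\cdot x}$ into the order-$N$ estimate. We need to bound $\sup_{\lambda\in B_\rho}|\partial^\alpha \phi_{n,x}(\lambda)|$ for $|\alpha|\le N$. By the Leibniz rule, $\partial^\alpha\phi_{n,x}$ is a sum over $\beta\le\alpha$ of $\binom{\alpha}{\beta}\partial^\beta(P(i\lambda)^n)\cdot\partial^{\alpha-\beta}(e^{i\lambda\cdot x})$. The factor $\partial^{\alpha-\beta}(e^{i\lambda\cdot x})$ contributes $x^{\alpha-\beta}$ times the exponential, which is bounded by $|x|^{|\alpha-\beta|}\le (1+|x|)^N$. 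For the other factor one uses exactly the computation already carried out in the proof of Theorem~\ref{thm:realPWSchwartz}: an induction gives $\partial^\beta(P(i\lambda)^n) = P(i\lambda)^{n-|\beta|}\sum_{l=0}^{|\beta|} n(n-1)\cdots(n-l+1)P_{l,\beta}(\lambda)P(i\lambda)^{|\beta|-l}$ for $n>|\beta|$, with $P_{l,\beta}$ independent of $n$. On the compact set $B_\rho$ the polynomials $P_{l,\beta}$ and $P(i\lambda)$ are bounded, and the combinatorial coefficients are at most $n^N$; the tricky point is the factor $|P(i\lambda)|^{n-|\beta|}$, which one bounds on $\supp T \subset B_\rho$ by $R(P,T)^{n-|\beta|}$ if $R(P,T)\ge 1$, and otherwise trivially by $1 \le R^{n-|\beta|}$ once $R>R(P,T)$ — more uniformly, since we are allowed to take any $R>R(P,T)$, one simply writes $|P(i\lambda)|^{n}\le R(P,T)^{n}\le C_1 R^{n}$ with $C_1$ absorbing the finitely many low powers, and pulls out the bounded extra factors $P(i\lambda)^{|\beta|-l}$ into the constant. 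Assembling these estimates, $\sup_{B_\rho}|\partial^\alpha\phi_{n,x}(\lambda)| \le C_2\, n^N R^n (1+|x|)^N$ for all $n$ and $x$, with $C_2$ independent of $n,x$; summing over the finitely many $\alpha$ with $|\alpha|\le N$ and multiplying by $C_0(2\pi)^{-d/2}$ gives the claim.

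The main obstacle is really just bookkeeping: keeping the dependence on $n$ at the polynomial level $n^N$ while correctly peeling off the exponential rate $R^n$, and handling the switch from $R(P,T)$ to an arbitrary $R>R(P,T)$ so that the constant $C$ does not blow up — this is needed precisely because $|P(i\lambda)|^{n-|\beta|}$ with $|\beta|$ up to $N$ forces a harmless factor $|P(i\lambda)|^{-|\beta|}$, bounded on $B_\rho$, but also because $R(P,T)$ could be $0$ (if $T=0$ the statement is trivial, so assume $T\ne0$) or smaller than $1$. I would also take care of the small-$n$ range $n\le N$ separately — there are only finitely many such $n$, and for each the left-hand side is a fixed polynomial in $x$ of degree at most $N$ (since $\F^{-1}T$ is a polynomial-growth smooth function here? no — rather, directly from the pairing identity with $n\le N$ fixed, the same Leibniz bound applies with $n^N$ replaced by a constant), so they are absorbed into $C$. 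No deep input is required beyond the structure theorem for compactly supported distributions of finite order and the derivative identity already established in the paper.
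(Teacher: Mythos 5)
Your overall strategy is the same as the paper's (pair $T$ against $P(i\lambda)^n e^{i\lambda\cdot x}$, expand by Leibniz, and invoke the order-$N$ continuity estimate), but there is a genuine gap in the crucial estimation step. The order-$N$ bound you use has the form $|\langle T,\phi\rangle|\le C_0\sum_{|\alpha|\le N}\sup_{B_\rho}|\partial^\alpha\phi|$, with the suprema taken over a fixed ball $B_\rho$ containing $\supp T$ --- not over $\supp T$ itself. When you then bound the factor $|P(i\lambda)|^{n-|\beta|}$ you silently restrict $\lambda$ to $\supp T$ (``which one bounds on $\supp T\subset B_\rho$ by $R(P,T)^{n-|\beta|}$''), but the supremum you must control ranges over all of $B_\rho$, where $|P(i\lambda)|$ may well exceed $R$. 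What your argument actually yields is the rate $\bigl(\sup_{\lambda\in B_\rho}|P(i\lambda)|\bigr)^n$, which can be exponentially larger than $R^n$. A concrete failure: take $d=1$, $T=\delta_0$, $P(x)=x$, so $R(P,T)=0$ and the proposition asserts a bound $Cn^0R^n$ for every $R>0$; your estimate over a ball $B_\rho$ only gives $\rho^n$.

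The missing ingredient is exactly the localization the paper performs: since $R>R(P,T)$, the open set $\{\lambda:|P(i\lambda)|<R\}$ is a neighborhood of $\supp T$, so one chooses $\psi\in C_c^\infty(\R^d)$ equal to $1$ near $\supp T$ and supported in $\{|P(i\lambda)|\le R\}$, and pairs $T$ with $\psi(\lambda)P(i\lambda)^n e^{i\lambda\cdot x}$ instead. Then every term in the Leibniz expansion carries a factor $\psi_{k,\alpha}$ vanishing outside $\supp\psi$, so the inequality $|P(i\lambda)|\le R$ is available wherever the test function is nonzero, and the rate $R^n$ comes out. Equivalently, you could repair your write-up by taking the order-$N$ seminorms over a compact neighborhood of $\supp T$ chosen \emph{inside} the sublevel set $\{|P(i\lambda)|\le R\}$ (such a choice depends on $R$ and on $P$, not just on $\rho$); as written, with a fixed ball $B_\rho$, the argument does not prove the stated inequality. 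The rest of your bookkeeping (the induction formula for $\partial^\beta(P(i\lambda)^n)$, the factor $n^N$, the polynomial $(1+|x|)^N$ from differentiating the exponential, and the separate treatment of $n\le N$) is fine.
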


\begin{proof}
Let $V_R=\{\lambda\in\R^d : |P(i\lambda)|\leq R\}$. Then $V_R$ contains the open neighborhood
$\{\lambda\in\R^d : |P(i\lambda)|< R\}$ of $\supp T$, hence we can choose $\psi\in C_c^\infty(\R^d)$ such
that $\psi=1$ on an open neighborhood of $\supp T$, and $\psi=0$ outside $V_R$. Since $T$ is of order $N$,
there exists a constant $C^\prime$, such that, for all $x_0\in\R^d$,
\begin{align*}
| (P(\partial)^n\F^{-1}T)(x_0)| &= |\F^{-1}(P(i\lambda)^n T)(x_0)|\\
&=|\F^{-1}(\psi P(i\lambda)^n T)(x_0)|\\
&=|\langle \psi P(i\lambda)^n T,e^{i x_0\cdot\lambda}\rangle|\\
&=|\langle T,\psi P(i\lambda)^n e^{i x_0\cdot\lambda}\rangle|\\
&\leq C^\prime \sum_{|\alpha|\leq N}\Vert D^\alpha (\psi P(i\lambda)^n e^{i x_0\cdot\lambda })\Vert_\infty.
\end{align*}

Induction with respect to $|\alpha|$ shows that
\[
D^\alpha (\psi P(i\lambda)^n e^{i x_0\cdot\lambda })=\sum_{k=0}^{|\alpha|} n(n-1)\cdots(n-k+1)\psi_{k,\alpha}(\lambda)Q_{k,\alpha}(x_0)e^{i x_0\cdot\lambda}P(i\lambda)^{n-k},
\]
where the $\psi_{k,\alpha}$ are smooth functions, independent of $n$ and $x_0$, and vanishing outside
$\supp \psi$, and the $Q_{k,\alpha}$ are polynomials, independent of $n$ and $\lambda$, and of degree at most $|\alpha|$.
For $n>N$, this can be written, for all $\lambda\in\R^d$, as
\[
P(i\lambda)^{n-N}\sum_{k=0}^{|\alpha|} n(n-1)\cdots(n-k+1)\psi_{k,\alpha}(\lambda)Q_{k,\alpha}(x_0)e^{i x_0\cdot\lambda}P(i\lambda)^{N-k}.
\]

The vanishing property of the $\psi_{k,\alpha}$ therefore implies that the supremum norm of this function is bounded by
\[
c_\alpha R^n n^{|\alpha|}(1+|x_0|)^{|\alpha|}
\]
for some constant $c_\alpha$. Hence there exists a constant $C$, such that the inequality in the theorem holds for $n>N$,
and increasing $C$ if necessary yields the desired inequality for all $n\in\N$.
\end{proof}

As to the reverse implication, we have the following result.

\begin{proposition}\label{prop:realPWtempdist2}
Let $f\in C^\infty(\R^d)$ be a tempered distribution, and suppose there exists a polynomial $P$, an integer $N\in\N_0$,
and constants $C,R\geq 0$, such that, for all $n\in\N$ and $x\in\R^d$,
\begin{equation}\label{eq:sufflargeequation}
|P(\partial)^n f(x)| \leq C n^N R^n (1+|x|)^N.
\end{equation}
Then $R(P,\F f)\leq R$.
\end{proposition}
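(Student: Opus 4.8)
The statement is the converse to Proposition~\ref{prop:realPWtempdist1}: from polynomial-times-exponential control of $P(\partial)^n f$ on $\R^d$ we must deduce a support bound for $\F f$. Since $f\in C^\infty(\R^d)\cap\S'(\R^d)$, its Fourier transform $\F f$ is a well-defined tempered distribution, and the natural strategy is to test $\F f$ against a cutoff supported away from $V_R=\{\lambda : |P(i\lambda)|\le R\}$ and show it vanishes there. Concretely, the plan is: fix $\lambda_0$ with $|P(i\lambda_0)|=R+2\varepsilon$ for some $\varepsilon>0$, choose $\psi\in C_c^\infty(\R^d)$ with $\langle\F f,\psi\rangle\ne 0$ (if $\F f\ne 0$ near $\lambda_0$) and $\supp\psi\subset\{\lambda : |P(i\lambda)|>R+\varepsilon\}$, and derive a contradiction by showing $\langle\F f,\psi\rangle=0$. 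This mirrors the (a)$\Rightarrow$(c) argument in Theorem~\ref{thm:realPWSchwartz} and the proof of Proposition~\ref{prop:Lpkeyprop}, transplanted to the distributional setting.

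The key identity is the same as in the earlier proofs: setting $\psi_n(\lambda)=P(i\lambda)^{-n}\psi(\lambda)$, which is a legitimate element of $C_c^\infty(\R^d)$ because $P(i\lambda)$ does not vanish on $\supp\psi$, one has
\begin{equation*}
\langle\F f,\psi\rangle=\langle\F f,P(i\lambda)^n\psi_n\rangle=\langle P(i\lambda)^n\F f,\psi_n\rangle=\langle\F(P(\partial)^n f),\psi_n\rangle=\langle P(\partial)^n f,\F\psi_n\rangle,
\end{equation*}
using \eqref{intertwining} and \eqref{distfouriertransform}. Now $P(\partial)^n f$ is a continuous function satisfying the polynomial growth bound \eqref{eq:sufflargeequation}, so it acts on $\F\psi_n\in\S(\R^d)$ by integration, and we can estimate
\begin{equation*}
|\langle\F f,\psi\rangle|\le\int_{\R^d}|P(\partial)^n f(x)|\,|\F\psi_n(x)|\,dx\le Cn^NR^n\int_{\R^d}(1+|x|)^N|\F\psi_n(x)|\,dx.
\end{equation*}
It remains to control the last integral. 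As in \eqref{intexpression} and the lines following it, fix $M\in\N$ large enough that $(1+|x|)^{N}(1+|x|^2)^{-M}\in L^1(\R^d)$; then $(1+|x|^2)^M\F\psi_n(x)$ is, up to a constant, the Fourier transform of $(1-\Delta)^M(P(i\lambda)^{-n}\psi(\lambda))$, and the Leibniz-plus-induction computation already carried out in the proof of Proposition~\ref{prop:Lpkeyprop} gives, on $\supp\psi$, the bound $|(1-\Delta)^M(P(i\lambda)^{-n}\psi(\lambda))|\le C_1 n^{2M}(R+\varepsilon)^{-n}$ for all $n>2M$. Integrating over the fixed compact set $\supp\psi$ and then against the $L^1$ weight yields a constant $C_2$ with $\int(1+|x|)^N|\F\psi_n(x)|\,dx\le C_2 n^{2M}(R+\varepsilon)^{-n}$.

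Putting the pieces together gives $|\langle\F f,\psi\rangle|\le CC_2 n^{N+2M}\bigl(R/(R+\varepsilon)\bigr)^n$ for all $n>2M$; letting $n\to\infty$ forces $\langle\F f,\psi\rangle=0$. Since this holds for every admissible test function $\psi$ supported in $\{\lambda : |P(i\lambda)|>R+\varepsilon\}$, the distribution $\F f$ vanishes on that open set; as $\varepsilon>0$ was arbitrary, $\supp\F f\subset\{\lambda : |P(i\lambda)|\le R\}$, i.e.\ $R(P,\F f)\le R$. The only genuinely new point compared with the earlier proofs is the harmless one that, because $f$ is merely a smooth tempered distribution rather than an $L^p$-function or Schwartz function, the pairing $\langle P(\partial)^n f,\F\psi_n\rangle$ must be handled as the integral of a polynomially-growing continuous function against a Schwartz function rather than via Hölder or an $L^\infty$ bound; this is where the factor $(1+|x|)^N$ from \eqref{eq:sufflargeequation} is absorbed by raising $M$. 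No step presents a real obstacle once the reduction to the non-vanishing of $\langle\F f,\psi\rangle$ is made.
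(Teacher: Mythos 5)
Your proposal is correct and follows essentially the same route as the paper's own proof: test $\F f$ against $\psi$ supported where $|P(i\lambda)|>R+\varepsilon$, pass to $\psi_n=P(i\lambda)^{-n}\psi$, transfer the pairing to $\langle P(\partial)^n f,\F\psi_n\rangle$ via \eqref{distfouriertransform} and \eqref{intertwining}, and absorb the $(1+|x|)^N$ growth by taking $M$ large in the $(1+|x|^2)^M\F\psi_n$ estimate inherited from Proposition~\ref{prop:Lpkeyprop}. The only differences are cosmetic (your $\varepsilon$ versus the paper's $\varepsilon/2$, and a contradiction framing that you correctly convert back into the direct statement that $\F f$ vanishes on the open set).
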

Note that it is not assumed that $\supp\F f$ is compact.
\begin{proof}
Suppose $\lambda_0\in\R^d$ is fixed and such that $|P(i\lambda_0)|\geq R+\varepsilon$, for some $\varepsilon>0$.
Let $V=\{\lambda\in\R^d : |P(i\lambda)|> R + \varepsilon/2\}$, and suppose $\psi\in C_c^\infty(\R^d)$ is supported in $V$.
We will show that $\langle \F f,\psi\rangle=0$, hence $\lambda_0\notin\supp\F f$, which implies the theorem.

To this end we introduce, for $n\in\N$, the function $\psi_n(\lambda)=\psi(\lambda)P(i\lambda)^{-n}$, which is a well
defined compactly supported smooth function. If $M\in\N$ is any fixed integer, such that $(1+|x|^2)^{-M}\in L^1(\R^d)$,
then as in the proof of Proposition~\ref{prop:Lpkeyprop} one concludes that there is a constant $C^\prime\geq 0$, such
that, for all $x\in\R^d$ and all $n > 2M$,
\[
(1+|x|^2)^M |\F \psi_n(x)|\leq C^\prime n^{2M} \left(R+\frac{\varepsilon}{2}\right)^{-n}.
\]
Hence, if we choose $M\in\N$ such that $(1+|x|^2)^{-M}(1+|x|)^N\in L^1(\R^d)$, then, for $n>2M$ and $x\in\R^d$,
\begin{align*}
|\langle \F f,\psi\rangle|&=|\langle f,\F (P(i\lambda)^n\psi_n)\rangle|\\
&=|\langle f, P(-\partial)^n\F \psi_n\rangle|\\
&=|\langle P(\partial)^n f,\F\psi_n\rangle|\\
&=\left|\int_{\R^d} P(\partial)^n f (x) (1+|x|^2)^{-M} (1+|x|^2)^{M} \F \psi_n(x)\,dx \right|\\
&\leq\int_{\R^d} Cn^N R^n (1+|x|)^N (1+|x|^2)^{-M} C^\prime n^{2M}\left(R+\frac{\varepsilon}{2}\right)^{-n}\,dx\\
&\leq C^{\prime\prime}n^{N+2M}\left(\frac{R}{R+\frac{\varepsilon}{2}} \right)^n,
\end{align*}
and the theorem follows.
\end{proof}

In view of the proof of Theorem~\ref{prop:complexPWcompdist} below, we note that the proof of
Proposition~\ref{prop:realPWtempdist2} above shows that it is sufficient that \eqref{eq:sufflargeequation} holds for all but finitely many $n$.

Combining Proposition~\ref{prop:realPWtempdist1} and a special case of Proposition~\ref{prop:realPWtempdist2}
yields the following characterization of distributions with compact support, as a real counterpart of the classical complex result.

\begin{theorem}\label{thm:realPWcompdist}
Let $f\in C^\infty(\R^d)$ be a tempered distribution, and suppose  the set $\{\lambda\in\R^d : |P(i\lambda)|\leq R_0\}$ is compact for a
polynomial $P$ and a constant $R_0\geq 0$. Then the support of $\F f$ is contained
in $\{\lambda\in\R^d : |P(i\lambda)|\leq R_0\}$ if, and only if, for each $R>R_0$, there exist constants $N_R\in\N_0$ and $C_R\geq 0$, such that
\begin{equation}\label{pwrealcounterpartinequality}
|P(\partial)^n f(x)| \leq C_R n^{N_R} R^n (1+|x|)^{N_R},
\end{equation}
for all $n\in\N$ and $x\in\R^d$. If this is the case then, for all $R>R_0$, one can in fact take $N_R$ equal
to the order of $\F f$ in \eqref{pwrealcounterpartinequality}.
\end{theorem}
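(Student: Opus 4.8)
The plan is to obtain this theorem as an essentially immediate synthesis of Proposition~\ref{prop:realPWtempdist1} and Proposition~\ref{prop:realPWtempdist2}, which already contain the two implications in very nearly the required form; the only real work is fitting the hypotheses together and pinning down the one spot where the compactness assumption on $V_{R_0}:=\{\lambda\in\R^d : |P(i\lambda)|\leq R_0\}$ is actually used.

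For the ``only if'' direction, together with the concluding remark about the order, I would proceed as follows. Assume $\supp\F f\subset V_{R_0}$. Since $V_{R_0}$ is compact by hypothesis, $\F f$ is a distribution with compact support, hence has a finite order $N$. Put $T=\F f$; then $\F^{-1}T=f$, and from $\supp T\subset V_{R_0}$ we get $R(P,T)\leq R_0$. Now fix any $R>R_0$, so that $R>R(P,T)$. Proposition~\ref{prop:realPWtempdist1} applied to $T$ then furnishes a constant $C$ with $|P(\partial)^n f(x)|=|P(\partial)^n\F^{-1}T(x)|\leq C\,n^N R^n(1+|x|)^N$ for all $n\in\N$ and $x\in\R^d$. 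This is precisely \eqref{pwrealcounterpartinequality} with $N_R=N$ equal to the order of $\F f$, so the implication and the supplementary statement about $N_R$ come out simultaneously.

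For the ``if'' direction the compactness assumption is not even needed. Fix $R>R_0$ and let $N_R,C_R$ be the associated constants from the hypothesis. Then $f\in C^\infty(\R^d)$ is a tempered distribution satisfying \eqref{eq:sufflargeequation} with the polynomial $P$, the integer $N=N_R$, and the constants $C=C_R$ and $R=R$ — exactly the setup of Proposition~\ref{prop:realPWtempdist2}. That proposition therefore yields $R(P,\F f)\leq R$, i.e. $\supp\F f\subset\{\lambda\in\R^d : |P(i\lambda)|\leq R\}$. Since this holds for every $R>R_0$ and $\bigcap_{R>R_0}\{\lambda : |P(i\lambda)|\leq R\}=\{\lambda : |P(i\lambda)|\leq R_0\}$, we conclude $\supp\F f\subset V_{R_0}$, as desired.

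I do not expect a genuine obstacle here; the proof is bookkeeping around the two propositions. The points one must not overlook are: that $\F^{-1}\F=\mathrm{id}$, so that Proposition~\ref{prop:realPWtempdist1} may legitimately be invoked with $T=\F f$ and $\F^{-1}T=f$; that compactness of $V_{R_0}$ is exactly what upgrades ``$\supp\F f$ is a closed subset of $V_{R_0}$'' to ``$\F f$ has compact support'', which is the condition making the order of $\F f$ finite and Proposition~\ref{prop:realPWtempdist1} applicable; and the harmless mismatch that Proposition~\ref{prop:realPWtempdist1} is phrased for $R>R(P,\F f)$ rather than $R>R_0$, which is no restriction since $R(P,\F f)\leq R_0<R$.
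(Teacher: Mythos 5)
Your proposal is correct and matches the paper's intent exactly: the paper gives no separate proof of this theorem but states that it follows by combining Proposition~\ref{prop:realPWtempdist1} (applied to $T=\F f$, which has compact support and hence finite order once $\supp\F f$ lies in the compact set $V_{R_0}$) with Proposition~\ref{prop:realPWtempdist2} (applied for each $R>R_0$ and then intersected). Your bookkeeping, including the identification of where compactness is used and why $N_R$ can be taken to be the order of $\F f$, is exactly the intended argument.
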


Theorems~\ref{thm:limLp} and~\ref{thm:reconstructsupport} for $L^p$ also have an analogue for distributions
with compact support, for which we need a definition.

\begin{definition}
Let $f\in C^\infty(\R^d)$ be a tempered distribution, such that $\supp\F f$ is compact. Let $P$ be a polynomial.
Then we define $\widetilde R(P,f)$ as the infimum of all $R\geq 0$, for which there exist constants $N\in\N_0$ and $C_{N,R}\geq 0$,
such that, for all $n\in\N$ and $x\in\R^d$,
\begin{equation*}
|P(\partial)^n f(x)| \leq C_{N,R} n^N R^n (1+|x|)^N.
\end{equation*}
\end{definition}

The analogue of Theorem~\ref{thm:limLp} is then the following, with  $\widetilde R(P,f)$ taking over the role of
$\lim_{n\to \infty}\| P(\partial )^n f\| _p ^{1/n}$.

\begin{theorem}
Let $f\in C^\infty(\R^d)$ be a tempered distribution, such that $\supp\F f$ is compact. Let $P$ be a polynomial.
Then $\widetilde R(P,f)=R(P,\F f)$.
\end{theorem}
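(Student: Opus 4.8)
The plan is to prove $\widetilde R(P,f)=R(P,\F f)$ by establishing the two inequalities separately, each using one of the two propositions on tempered distributions already proved, rather than re-doing any estimates.

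\medskip

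For the inequality $\widetilde R(P,f)\le R(P,\F f)$, fix any $R>R(P,T)$ where $T=\F f$ is the compactly supported distribution of some finite order $N$. By Proposition~\ref{prop:realPWtempdist1} applied to $T$ (noting $\F^{-1}T=f$), there is a constant $C$ with $|P(\partial)^n f(x)|\le C n^N R^n(1+|x|)^N$ for all $n\in\N$ and $x\in\R^d$. This is exactly an estimate of the type appearing in the definition of $\widetilde R(P,f)$, so $\widetilde R(P,f)\le R$. Since $R>R(P,\F f)$ was arbitrary, $\widetilde R(P,f)\le R(P,\F f)$. (One should remark that $f\in C^\infty(\R^d)$ forces $\F f$ to be compactly supported here as hypothesized, and conversely that compact support of $\F f$ makes $f$ smooth, so the hypothesis is internally consistent and the order $N$ is finite.)

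\medskip

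For the reverse inequality $R(P,\F f)\le\widetilde R(P,f)$, suppose first $\widetilde R(P,f)<\infty$ and let $R>\widetilde R(P,f)$ be arbitrary. By definition of the infimum, there exist $N\in\N_0$ and $C\ge 0$ with $|P(\partial)^n f(x)|\le C n^N R^n(1+|x|)^N$ for all $n\in\N$, $x\in\R^d$. This is precisely hypothesis \eqref{eq:sufflargeequation} of Proposition~\ref{prop:realPWtempdist2}, whose conclusion gives $R(P,\F f)\le R$. Letting $R\downarrow\widetilde R(P,f)$ yields $R(P,\F f)\le\widetilde R(P,f)$. If $\widetilde R(P,f)=\infty$ the inequality is trivial. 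Combining the two directions gives the theorem.

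\medskip

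I do not expect a genuine obstacle: the whole content is a bookkeeping match between the polynomial-growth estimate in the definition of $\widetilde R(P,f)$ and the two already-established real Paley--Wiener statements for distributions. The only points requiring a word of care are (i) checking that the hypotheses line up — in particular that $\F^{-1}$ of a compactly supported distribution of order $N$ is the smooth function $f$, so Proposition~\ref{prop:realPWtempdist1} is literally applicable — and (ii) handling the infimum correctly by taking $R$ strictly larger than $\widetilde R(P,f)$ in one direction and strictly larger than $R(P,\F f)$ in the other, together with the degenerate case $\widetilde R(P,f)=\infty$. No new estimate needs to be derived.
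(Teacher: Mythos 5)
Your proposal is correct and follows exactly the paper's own argument: one inequality from Proposition~\ref{prop:realPWtempdist1} and the reverse from Proposition~\ref{prop:realPWtempdist2}, with the limiting arguments in $R$ spelled out in more detail than the paper bothers to give. Nothing to add.
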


\begin{proof}
By Proposition~\ref{prop:realPWtempdist1}, $\widetilde R(P,f)\leq R(P,\F f)$. The reverse inequality follows from
Proposition~\ref{prop:realPWtempdist2}.
\end{proof}

The reconstruction of supports, as in Theorem~\ref{thm:reconstructsupport} for the $L^p$-case, now takes the following form.

\begin{theorem}\label{thm:reconstructsupportcompdist}
Let $f\in C^\infty(\R^d)$ be a tempered distribution such that $\supp\F f$ is compact. Let $K$ be a non-empty compact subset of $\R^d$. Then
\begin{enumerate}
\item[(a)] For any set $\mathcal P_K$ determining $K$ as in Lemma~\ref{lem:SW}, $\supp \F f\subset K$ if, and only if,
\[
\widetilde R(P,f)\leq\max_{\lambda\in K}|P(i\lambda)|,
\]
for all $P$ in $\mathcal P_K$.
\item [(b)] For any set $\mathcal P_{\supp \F f}$ determining $\supp \F f$ as in Lemma~\ref{lem:SW}, $\lambda\in\R^d$ is in $\supp\F f$ if,
and only if,
\[
|P(i\lambda)|\leq \widetilde R(P,f),
\]
for all $P\in\mathcal{P}_{\supp\F f}$.
\item[(c)] One can reconstruct $\supp\F f$ as
\[
\supp \F f=\{\lambda\in\R^d : |P(i\lambda)|\leq  \widetilde R(P,f),\,\textup{ for all polynomials }P\}.
\]
\end{enumerate}
\end{theorem}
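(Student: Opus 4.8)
The plan is to reduce the entire statement to the equality $\widetilde R(P,f)=R(P,\F f)$ established just above, together with the geometric Lemma~\ref{lem:SW}, following the pattern of the proof of Theorem~\ref{thm:reconstructsupport} in the $L^p$-setting. All the analytic work has already been done in Propositions~\ref{prop:realPWtempdist1} and~\ref{prop:realPWtempdist2}; what remains is to unwind definitions, using that $K$ and $\supp\F f$ are compact so that the suprema occurring in $R(P,\cdot)$ and in Lemma~\ref{lem:SW} are actually attained.

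First I would treat part (a). If $\supp\F f\subset K$, then for every polynomial $P$ one has $\widetilde R(P,f)=R(P,\F f)=\sup_{\lambda\in\supp\F f}|P(i\lambda)|\le\max_{\lambda\in K}|P(i\lambda)|$, the maximum being attained by compactness of $K$; in particular the inequality in (a) holds for all $P\in\mathcal P_K$. Conversely, suppose $\widetilde R(P,f)\le\max_{\lambda\in K}|P(i\lambda)|$ for all $P\in\mathcal P_K$. For a fixed $\lambda_0\in\supp\F f$ and any $P\in\mathcal P_K$ one then has $|P(i\lambda_0)|\le R(P,\F f)=\widetilde R(P,f)\le\max_{\lambda\in K}|P(i\lambda)|$, so $\lambda_0$ lies in each set $\{\lambda\in\R^d:|P(i\lambda)|\le\max_{\lambda\in K}|P(i\lambda)|\}$ with $P\in\mathcal P_K$, hence in their intersection $K$; thus $\supp\F f\subset K$. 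For part (b), one implication is immediate, since $|P(i\lambda)|\le R(P,\F f)=\widetilde R(P,f)$ for every polynomial $P$ whenever $\lambda\in\supp\F f$. For the converse I would apply Lemma~\ref{lem:SW} to the nonempty compact set $\supp\F f$: since $\widetilde R(P,f)=R(P,\F f)=\max_{\mu\in\supp\F f}|P(i\mu)|$, the hypothesis $|P(i\lambda)|\le\widetilde R(P,f)$ for all $P\in\mathcal P_{\supp\F f}$ says precisely that $\lambda\in\bigcap_{P\in\mathcal P_{\supp\F f}}\{\mu\in\R^d:|P(i\mu)|\le\max_{\mu\in\supp\F f}|P(i\mu)|\}=\supp\F f$. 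Finally, part (c) is the special case of (b) in which $\mathcal P_{\supp\F f}$ is taken to be the set of all polynomials, which determines any nonempty compact set in the sense of Lemma~\ref{lem:SW} by the Stone-Weierstra{\ss} observation recorded after that lemma.

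I do not expect a genuine obstacle here: the substance of the result lies entirely in Propositions~\ref{prop:realPWtempdist1} and~\ref{prop:realPWtempdist2} and in Lemma~\ref{lem:SW}. The only point needing attention is keeping the three quantities $\widetilde R(P,f)$, $R(P,\F f)$ and the (attained) maximum of $|P(i\cdot)|$ over $\supp\F f$ identified throughout, together with the harmless observation that when $\F f=0$ the support is empty and (c) still holds by inspection, while (a) and (b) presuppose the relevant set to be nonempty so that Lemma~\ref{lem:SW}, which is stated with maxima over a nonempty compact set, applies without change.
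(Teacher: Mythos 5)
Your proposal is correct and follows essentially the same route as the paper: it reduces everything to the identity $\widetilde R(P,f)=R(P,\F f)$ (the theorem proved just before, via Propositions~\ref{prop:realPWtempdist1} and~\ref{prop:realPWtempdist2}) combined with the set-theoretic content of Lemma~\ref{lem:SW}, exactly as in the $L^p$-case of Theorem~\ref{thm:reconstructsupport}. Your version is in fact more explicit than the paper's one-line proof, and your remarks about attained maxima and the degenerate case $\F f=0$ are correct and harmless additions.
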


\begin{proof}
Part (a) follows from Proposition~\ref{prop:realPWtempdist1},
as does part (b). Part (c) is a special case of part (b).
\end{proof}

\section{Proving complex Paley--Wiener theorems without domain shifting}\label{sec:complexPW}

As mentioned in the introduction, there are instances where
proving a complex Paley--Wiener theorem for an integral
transform is not possible using domain shifting since the
integrand is not entire. In that case, results as in
Section~\ref{sec:realPW} may offer an alternative approach by
deriving the complex theorems from the real ones, as is done in
\cite{AdJ1}. To illustrate this point, we show how versions of a
number of classical complex Paley--Wiener theorems for the
Fourier transform follow from the real theorems in
Section~\ref{sec:realPW}. The strategy is to apply the Cauchy
formula to an entire function $f$, and then exploit the given usual
estimates for $f$ on $\C^d$ to obtain upper bounds for $\Vert
\partial_\xi^n f\Vert_\infty$ on $\R^d$, for all $n\in\N$ and
$\xi\in\R^d$. Combining this with a special case of
Proposition~\ref{prop:Lpkeyprop} gives upper bounds for
$\max_{x\in \supp \F f}|\xi\cdot x|$, for all $\xi\in \R^d$,
and then a basic separation theorem from convex analysis
implies the desired statement concerning $\supp \F f$ or $\supp \F^{-1} f$.

While this approach could offer an alternative where domain shifting is not valid, as yet it seems to be limited to supports
contained in a compact convex symmetric set. The symmetry condition is not necessary for the validity of the statement in the
Fourier case, but controlling $\Vert \partial_\xi^n f\Vert_\infty$ on $\R^d$ can never yield a result which is stronger than
that $\supp \F f$ is contained in some symmetric set, simply because $\max_{x\in A}|\xi\cdot x|=\max_{x\in -A}|\xi\cdot x|$.
As demonstrated by, e.g., Theorem~\ref{thm:reconstructsupport}, it is in principle possible to reconstruct $\supp \F f$ by
controlling $\Vert P(\partial)^n f\Vert_\infty$ on $\R^d$ for sufficiently many polynomials $P$, so that one might hope to be
able to infer the general complex case via the real Proposition~\ref{prop:Lpkeyprop} by invoking more polynomials than just the
first degree homogeneous ones. The combinatorics that arise when trying to obtain estimates via the Cauchy formula for
$\Vert P(\partial)^n f\Vert_\infty$ for more general $P$ seem to get rather involved though, and it remains to be seen whether
a real approach to the full complex result is actually feasible.

For functions, the only result we will need from Section~\ref{sec:realPW} is Proposition~\ref{prop:Lpkeyprop} for $p=\infty$
and homogeneous polynomials of degree one. If the function in Proposition~\ref{prop:Lpkeyprop} is a Schwartz function, then
we can also use the following ad hoc result with a much simpler proof. We include it to illustrate that deriving the complex
Theorem~\ref{prop:complexPWSchwartz} via real results is rather elementary.

\begin{lemma}\label{lem:liminfSchwartz}
Let $P$ be a homogeneous polynomial with real coefficients, $f\in \S(\R^d)$, and $1\leq p\leq\infty$.
Then in the extended positive real numbers
\begin{equation} \label{mustshow1Schwartz}
\liminf _{n\to \infty}\| P(\partial )^n f\| _p ^{1/n} \ge R(P,\F f).
\end{equation}
\end{lemma}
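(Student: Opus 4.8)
The plan is to imitate the proof of the implication (a)$\Rightarrow$(c) in Theorem~\ref{thm:realPWSchwartz}, but run in reverse: fix a point $\lambda_0\in\supp\F f$ with $P(i\lambda_0)\neq 0$ and show that $\liminf_{n\to\infty}\|P(\partial)^n f\|_p^{1/n}\geq|P(i\lambda_0)|$. Taking the supremum over such $\lambda_0$ then gives \eqref{mustshow1Schwartz}, since points with $P(i\lambda_0)=0$ contribute nothing to $R(P,\F f)$ (when $P$ is homogeneous, the only such value is $0$). So fix $\lambda_0$ and fix $\varepsilon>0$ with $0<2\varepsilon<|P(i\lambda_0)|$.

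The key step is to test $\F f$ against a bump function. Choose $\psi\in C_c^\infty(\R^d)$ with $\langle\F f,\psi\rangle\neq 0$ and with $\supp\psi$ contained in the open set $\{\lambda : |P(i\lambda)|>|P(i\lambda_0)|-\varepsilon\}$, which is possible because $\lambda_0$ lies in that open set and in $\supp\F f$. For $n\in\N$ put $\psi_n(\lambda)=P(i\lambda)^{-n}\psi(\lambda)$, a well-defined compactly supported smooth function. Exactly as in the proof of Proposition~\ref{prop:Lpkeyprop}, differentiating $P(i\lambda)^{-n}$ produces, for $|\alpha|\leq 2M$ (with $M$ chosen so that $(1+|x|^2)^{-M}\in L^q(\R^d)$, $1/p+1/q=1$), an $n$-dependence of the shape $n^{2M}(|P(i\lambda_0)|-\varepsilon)^{-n}$ on $\supp\psi$; hence
\[
\|\F\psi_n\|_q\leq C\,n^{2M}(|P(i\lambda_0)|-\varepsilon)^{-n}
\]
for all large $n$ and some constant $C$. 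Now use the identity
\[
\langle\F f,\psi\rangle=\langle\F f,P(i\lambda)^n\psi_n\rangle=\langle\F(P(\partial)^n f),\psi_n\rangle=\langle P(\partial)^n f,\F\psi_n\rangle,
\]
valid by \eqref{distfouriertransform} and \eqref{intertwining}, together with Hölder's inequality (legitimate since $P(\partial)^n f\in\S(\R^d)\subset L^p(\R^d)$), to get
\[
0<|\langle\F f,\psi\rangle|\leq\|P(\partial)^n f\|_p\,\|\F\psi_n\|_q\leq C\,n^{2M}(|P(i\lambda_0)|-\varepsilon)^{-n}\,\|P(\partial)^n f\|_p.
\]
Rearranging gives $\|P(\partial)^n f\|_p^{1/n}\geq c^{1/n} n^{-2M/n}(|P(i\lambda_0)|-\varepsilon)$ for all large $n$, and letting $n\to\infty$ yields $\liminf_n\|P(\partial)^n f\|_p^{1/n}\geq|P(i\lambda_0)|-\varepsilon$; since $\varepsilon$ was arbitrary we conclude $\liminf_n\|P(\partial)^n f\|_p^{1/n}\geq|P(i\lambda_0)|$, as wanted.

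I expect no serious obstacle here: this is essentially Proposition~\ref{prop:Lpkeyprop} with the hypothesis "$P(\partial)^n f\in L^p$ for all $n$" automatically satisfied because $f\in\S(\R^d)$. The only point to watch is the claim that it suffices to check points $\lambda_0$ with $P(i\lambda_0)\neq 0$; this is where homogeneity of $P$ is used, since then $P(i\lambda)=0$ forces $|P(i\lambda)|=0$, so such points never realize the supremum $R(P,\F f)$ unless $\F f\equiv 0$ on $\{P(i\lambda)\neq 0\}$, in which case $R(P,\F f)=0$ and the inequality is trivial. The simplification relative to Proposition~\ref{prop:Lpkeyprop} is precisely that we may absorb the arbitrariness of $\varepsilon$ cleanly at the end rather than carrying a two-sided sandwich on $\supp\psi$, so the "ad hoc" proof is indeed shorter.
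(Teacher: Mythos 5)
Your proof is correct, but it is not the route the paper takes for this lemma. What you have written is essentially a rerun of the proof of Proposition~\ref{prop:Lpkeyprop} (whose hypothesis $P(\partial)^n f\in L^p$ is indeed automatic for Schwartz $f$), constructing $\psi_n=P(i\lambda)^{-n}\psi$ from a bump function near $\lambda_0$ and estimating $\|\F\psi_n\|_q$ via derivatives of $P(i\lambda)^{-n}$. The paper deliberately offers a different, ``ad hoc'' argument that avoids this machinery altogether: it tests $P(\partial)^{4n+j}f$ against $\F\psi_j$ with $\psi_j=\overline{\F(P(\partial)^j f)}$ for $j\in\{0,1,2,3\}$, and uses that for a real homogeneous $P$ one has $P(i\lambda)^{4n}=|P(i\lambda)|^{4n}\geq 0$, so that the pairing becomes $\int|P(i\lambda)|^{4n}|\F(P(\partial)^j f)|^2\,d\lambda$, which is bounded below by $(|P(i\lambda_0)|-\varepsilon)^{4n}$ times a positive constant by restricting the integral to $\{|P(i\lambda)|\geq|P(i\lambda_0)|-\varepsilon\}$. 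That version needs no bump functions and no derivative estimates, which is the point of including the lemma separately; the price is that it genuinely uses both homogeneity and realness of $P$. Your argument buys generality instead: it never actually uses either hypothesis. On that note, your parenthetical claim that homogeneity is what lets you discard points with $P(i\lambda_0)=0$ is a red herring --- for any polynomial such points contribute $0$ to the supremum defining $R(P,\F f)$, so the reduction to $P(i\lambda_0)\neq 0$ is free in general. Also, your observation that only the lower bound $|P(i\lambda)|>|P(i\lambda_0)|-\varepsilon$ on $\supp\psi$ is needed (not the two-sided condition \eqref{compact}) is accurate.
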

Obviously the result remains true for polynomials which are a scalar multiple of a polynomial as in the lemma.
\begin{proof}
Let $\lambda _0\in \supp \F f$, and assume $P(i\lambda _0) \ne 0$.
We choose and fix an $\varepsilon >0$, such that
$0 < \varepsilon < |P(i\lambda _0)|$.
We will show that
\begin{equation}\label{mustshow2Schwartz}
\liminf _{n\to \infty}\| P(\partial )^n f\| _p ^{1/n} \ge |P(i\lambda _0)| - \varepsilon,
\end{equation}
which will establish (\ref{mustshow1Schwartz}).
Define, for $j\in \{0,1,2,3\}$,
\begin{equation*}
\psi_j (\lambda) = \overline{\F (P(\partial)^jf)(\lambda) }\qquad (\lambda \in \R^d).
\end{equation*}
Then, using (\ref{distfouriertransform}) and (\ref{intertwining}),
H\"older's inequality yields
\begin{align*}
\left \Vert  P(\partial)^{4n+j} f \right \Vert _{p}\left \Vert \F \psi_j \right \Vert _{q}
&\ge|\langle P(\partial)^{4n+j} f, \F \psi_j \rangle  |\\
&=|\langle \F (P(\partial)^{4n+j}  f),\psi_j\rangle  |\\
&=\left |\int _{\R^d} P(i\lambda)^{4n} \F(P(\partial)^{j}  f)(\lambda) \psi_j (\lambda)\, d\lambda \right |\\
&=\int _{\R^d} |P(i\lambda)|^{4n} |\F (P(\partial)^{j}  f)|^2\, d\lambda \\
&\ge (|P(i\lambda _0)|-\varepsilon)^{4n}
\int _{\{\lambda : |P(i\lambda)|\ge |P(i\lambda _0)| -\varepsilon\}}|P(i\lambda)^{2j}|
| \F f(\lambda)|^2\,d\lambda.
\end{align*}
As the integral in the last line is strictly positive due to the fact that $\lambda_0\in\supp \F f$, this yields (\ref{mustshow2Schwartz}).
Note that both assumptions on $P$ were used in passing from the third line to the fourth in the above display of equations.
\end{proof}

Before passing to the complex Paley--Wiener theorems, let us establish terminology. If $A$ is a non-empty subset of $\R^d$, then we define
the supporting function $H_A:\R^d\to\R^d$ of $A$ as, $H_A (x) = \max _{a\in A}  a\cdot x$, for $x\in \R^d$.
\begin{definition}Let $A$ be a subset of $\R^d$, and let $f:\C^d\to\C$.
We say that $f$ is an entire function on $\C^d$ of exponential type corresponding to
$A$, if $f$ is entire, and if there exists a positive constant $C$, such that,
\begin{equation}\label{exptype}
|f(z) | \le C e^{H_A (\Im z)}\qquad (z\in \C^d).
\end{equation}
We further say that $f$ is a rapidly decreasing
entire function on $\C^d$ of exponential type corresponding to $A$, if $f$ is entire, and, for each $n\in \N_0$,
there exists a positive constants $C_n$, such that
\begin{equation}\label{rapidexptype}
|f(z) | \le C_n (1+|z|)^{-n} e^{H_A (\Im z)}\qquad (z\in \C^d).
\end{equation}
\end{definition}

As is well known, if an entire function $f$ satisfies (\ref{rapidexptype}), then, using
the Cauchy formula, one sees that the partial derivatives
$\partial _\xi^n f$ satisfy similar estimates, for all $\xi\in \R^d,\,n\in \N$. Hence
the restriction of $f$ to $\R^d$ is a Schwartz function,
explaining the terminology.

The following result is the pivot in the transition from the complex to the real domain in the case of functions.

\begin{lemma}\label{lem:expestimates}
Let $A$ be a non-empty compact symmetric subset of $\R^d$, and suppose $f$ is an entire function on $\C^d$ of exponential type corresponding to $A$.
Then, for all $\xi\in \R^d,\,n\in \N$, the partial derivatives $\partial _\xi ^n f$ are bounded on $\R^d$, and
\begin{equation}\label{stirling}
\limsup _{n\to\infty}\|\partial _\xi^n f \|_\infty ^{1/n} \le H_A (\xi).
\end{equation}
\end{lemma}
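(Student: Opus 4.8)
The plan is to obtain the estimate \eqref{stirling} by representing $\partial_\xi^n f$ on $\R^d$ via a Cauchy integral over a circle of radius $r$ in the complex direction $\xi$, optimizing $r$ at the end. Concretely, fix $\xi\in\R^d$ and $x\in\R^d$. Since $f$ is entire, the one-variable function $\zeta\mapsto f(x+\zeta\xi)$ is entire on $\C$, so by the Cauchy integral formula for derivatives,
\[
\partial_\xi^n f(x)=\frac{n!}{2\pi i}\int_{|\zeta|=r}\frac{f(x+\zeta\xi)}{\zeta^{n+1}}\,d\zeta
\]
for every $r>0$. On the contour $\zeta=re^{i\theta}$ we have $\Im(x+\zeta\xi)=r\sin\theta\,\xi$, so the exponential-type bound \eqref{exptype} gives $|f(x+\zeta\xi)|\le C e^{H_A(r\sin\theta\,\xi)}$. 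Now $H_A$ is positively homogeneous, and because $A$ is symmetric, $H_A(-\eta)=H_A(\eta)$, hence $H_A(r\sin\theta\,\xi)=r|\sin\theta|\,H_A(\xi)\le r\,H_A(\xi)$ uniformly in $\theta$ (here one uses $H_A(\xi)\ge 0$, which again follows from symmetry of $A$, so that $0$ lies in the convex hull of $A$). This yields the uniform bound $|f(x+\zeta\xi)|\le C e^{r H_A(\xi)}$ on the contour, and therefore
\[
\|\partial_\xi^n f\|_\infty\le \frac{n!}{r^n}\,C\,e^{r H_A(\xi)}
\]
for every $r>0$; in particular each $\partial_\xi^n f$ is bounded on $\R^d$.

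The second step is to optimize over $r$. If $H_A(\xi)>0$, choosing $r=n/H_A(\xi)$ gives $\|\partial_\xi^n f\|_\infty\le C\,n!\,(e H_A(\xi)/n)^n$, and Stirling's formula $n!^{1/n}\sim n/e$ then yields $\limsup_{n\to\infty}\|\partial_\xi^n f\|_\infty^{1/n}\le H_A(\xi)$, which is \eqref{stirling}. If $H_A(\xi)=0$, then letting $r\to\infty$ in the bound $\|\partial_\xi^n f\|_\infty\le C\,n!/r^n$ shows $\partial_\xi^n f\equiv 0$ for all $n\ge 1$ (so in fact $f$ is constant along the $\xi$-direction), and the $\limsup$ is $0=H_A(\xi)$; alternatively one just takes $r=n$ and uses Stirling. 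Either way the bound holds.

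I expect the only genuinely delicate point to be the bookkeeping with $H_A$: one must use that $A$ is symmetric to guarantee both $H_A(\xi)\ge 0$ and $H_A(\pm t\xi)=|t|H_A(\xi)$, so that the factor $e^{H_A(r\sin\theta\,\xi)}$ is controlled by $e^{r H_A(\xi)}$ independently of the sign of $\sin\theta$; without symmetry one would pick up a contribution from $H_A(-\xi)$ and the argument would instead bound the $\limsup$ by $\max(H_A(\xi),H_A(-\xi))$. Everything else is the standard Cauchy-estimate-plus-Stirling computation, and no result beyond the elementary Cauchy formula is needed here (the lemmas from Section~\ref{sec:realPW} enter only later, when this estimate is fed into Proposition~\ref{prop:Lpkeyprop}).
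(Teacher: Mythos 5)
Your argument is correct and is essentially the same as the paper's: a Cauchy integral over the circle $|t|=r$ in the complex $\xi$-direction, the exponential-type bound combined with the symmetry of $A$ to control $e^{H_A(\Im(t\xi))}$ by $e^{rH_A(\xi)}$, the choice $r=n/H_A(\xi)$ (with $r\to\infty$ when $H_A(\xi)=0$), and Stirling's formula. The only difference is cosmetic bookkeeping in how the symmetry is invoked on the contour.
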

\begin{proof}
We claim that
\begin{equation}\label{estimate}
\|\partial _\xi^n f \|_\infty  \le C \frac {n!e^n}{n^n} ({H_A (\xi)})^n,
\end{equation}
for $\xi\in \R^d,\,n\in \N$,
where the constant $C$ is as in (\ref{exptype}). Indeed, Cauchy's theorem yields, for arbitrary $r>0$, that
\begin{equation*}
\partial _\xi^n f (x)=  \frac{d^n}{dt^n}\vert_{t=0} \{ t\mapsto f(x+t\xi)\}=\frac {n!}{2\pi i}\oint_{|t| = r}
\frac {f(x+t\xi)}{t^{n+1}}\, dt \qquad (x\in \R^d),
\end{equation*}
and thus
\begin{equation*}
|\partial _\xi^n f (x)|\le C \frac {n!}{r^n} e^{\max_{a\in A,|t|=r}a\cdot\Im(t\xi)}=
C \frac {n!}{r^n} e^{r\max_{a\in A}|a\cdot\xi|}=C \frac {n!}{r^n} e^{rH_A(\xi)},
\end{equation*}
where the symmetry of $A$ is used in the final equality.
If $H_A (\xi) =0$, letting $r\to \infty$ shows that $\partial _\xi^n f = 0,\,n\in\N$,
and(\ref{estimate}) is proved. If $H_A (\xi) \ne 0$, then $H_A(\xi)>0$ by the symmetry of $A$, hence we can choose $r = n/H_A (\xi)$,
and this again establishes (\ref{estimate}).
Then (\ref{stirling}) follows from Stirling's formula.
\end{proof}

We now use Lemmas~\ref{lem:liminfSchwartz}
and~\ref{lem:expestimates} to prove the symmetric version of
the complex Paley--Wiener theorem for smooth functions.

\begin{theorem}\label{prop:complexPWSchwartz}
Let $A$ be a non-empty compact, convex and symmetric subset of $\R^d$, and suppose $f:\R^d\to\C$. Then the following are equivalent:
\begin{enumerate}
\item[(a)] $f$ is the Fourier transform of a smooth function with support contained in $A$;
\item[(b)] $f$ extends to a rapidly decreasing entire function on $\C^d$ of exponential type corresponding to $A$.
\end{enumerate}
\end{theorem}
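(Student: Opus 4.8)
The plan is to establish the two implications separately, with the bulk of the work being the passage from (b) to (a) via the real machinery of Section~\ref{sec:realPW}. The direction (a) $\Rightarrow$ (b) is the classical easy half: if $g$ is a smooth function supported in $A$ and $f=\F g$, then for $z\in\C^d$ we write $f(z)=(2\pi)^{-d/2}\int_A g(x)e^{-iz\cdot x}\,dx$, which is manifestly entire in $z$, and $|f(z)|\leq (2\pi)^{-d/2}\|g\|_1 \sup_{x\in A}|e^{-iz\cdot x}|\leq C e^{H_A(\Im z)}$. To get the rapid decrease factor $(1+|z|)^{-n}$, integrate by parts: $z^\alpha f(z)$ is up to a constant the Fourier--Laplace transform of $\partial^\alpha g$, which is again supported in $A$, so the same estimate gives $|z^\alpha f(z)|\leq C_\alpha e^{H_A(\Im z)}$ for every multi-index $\alpha$, and summing over $|\alpha|\leq n$ yields \eqref{rapidexptype}.

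For the substantive direction (b) $\Rightarrow$ (a), suppose $f$ extends to a rapidly decreasing entire function of exponential type corresponding to $A$. First, the restriction of $f$ to $\R^d$ is a Schwartz function (this is the remark just before the theorem, obtained from the Cauchy formula applied to \eqref{rapidexptype}), so $g:=\F^{-1}f\in\S(\R^d)$ is well defined and $\F g=f$; it remains to show $\supp g\subseteq A$. Fix $\xi\in\R^d$. Applying Lemma~\ref{lem:expestimates} to $f$ (with exponential type corresponding to $A$) and to each $z^\alpha f$ — which is also of exponential type corresponding to $A$ — gives, for the homogeneous degree-one polynomial $P_\xi(x):=\xi\cdot x$ and the associated differential operator $P_\xi(\partial)=\partial_\xi$, that $\limsup_{n\to\infty}\|\partial_\xi^n f\|_\infty^{1/n}\leq H_A(\xi)$. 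Now I apply Lemma~\ref{lem:liminfSchwartz} with $p=\infty$ and the polynomial $P_\xi$ (real coefficients, homogeneous of degree one) to $g\in\S(\R^d)$, using $\F g=f$: since $P_\xi(\partial)^n g=\F^{-1}(P_\xi(i\lambda)^n f)$ has Fourier transform $P_\xi(i\lambda)^n\,\F g$, and $\F(P_\xi(\partial)^n g)$... more directly, Lemma~\ref{lem:liminfSchwartz} gives
\[
R(P_\xi,\F g)\leq\liminf_{n\to\infty}\|P_\xi(\partial)^n g\|_\infty^{1/n}.
\]
Here one must be slightly careful: $P_\xi(\partial)$ acting on $g$ corresponds under $\F$ to multiplication by $P_\xi(i\lambda)=i(\xi\cdot\lambda)$, whereas $\partial_\xi f$ is differentiation of $f$ on $\R^d$; but since $f=\F g$, differentiating $f$ in the direction $\xi$ equals $\F$ of multiplication of $g$ by $-i(\xi\cdot x)$, not $P_\xi(\partial)g$. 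So the clean route is: apply Lemma~\ref{lem:liminfSchwartz} to $f$ itself (as a Schwartz function) with polynomial $P_\xi$, obtaining $R(P_\xi,\F f)\leq\liminf_n\|P_\xi(\partial)^n f\|_\infty^{1/n}\leq\limsup_n\|\partial_\xi^n f\|_\infty^{1/n}\leq H_A(\xi)$, where $\F f$ denotes the Fourier transform of the Schwartz restriction. Then $R(P_\xi,\F f)=\sup_{x\in\supp\F f}|\xi\cdot x|\leq H_A(\xi)$ for every $\xi\in\R^d$.

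It remains to convert the family of inequalities $\sup_{x\in\supp\F f}(\xi\cdot x)\leq\max_{x\in\supp\F f}|\xi\cdot x|\leq H_A(\xi)=\max_{a\in A}(a\cdot\xi)$, valid for all $\xi\in\R^d$, into the inclusion $\supp\F f\subseteq A$. This is the standard separating-hyperplane argument: $A$ is compact and convex, so $A=\bigcap_{\xi\in\R^d}\{y\in\R^d:\xi\cdot y\leq H_A(\xi)\}$; since every $x\in\supp\F f$ satisfies $\xi\cdot x\leq H_A(\xi)$ for all $\xi$, we get $\supp\F f\subseteq A$. Finally, one identifies $\F f$ with $g$: since $f$ restricted to $\R^d$ is Schwartz and $f=\F g$ with $g\in\S(\R^d)$, Fourier inversion gives $\F f=\F\F g=g^{\vee}$, i.e. $g(x)=\F f(-x)$ up to the normalization, so $\supp g$ is the reflection of $\supp\F f$; but $A$ is symmetric, so $\supp g\subseteq -A=A$, and $f=\F g$ with $g$ smooth and supported in $A$, which is (a). The main obstacle is bookkeeping the roles of $f$ versus $g=\F^{-1}f$ and of $\partial_\xi$ versus $P_\xi(\partial)$ consistently — once one fixes the convention that all estimates are read off the Schwartz function $f|_{\R^d}$ and its Fourier transform, everything lines up and the only genuine inputs are Lemmas~\ref{lem:expestimates} and~\ref{lem:liminfSchwartz} together with the elementary convex-geometry fact.
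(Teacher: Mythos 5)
Your proposal is correct and follows essentially the same route as the paper: the easy classical direction for (a)$\Rightarrow$(b), and for (b)$\Rightarrow$(a) the combination of Lemma~\ref{lem:expestimates} (Cauchy estimates giving $\limsup_n\Vert\partial_\xi^n f\Vert_\infty^{1/n}\le H_A(\xi)$) with Lemma~\ref{lem:liminfSchwartz} applied to the Schwartz restriction of $f$ and the degree-one polynomial $P_\xi$, followed by the convex separation argument and the use of the symmetry of $A$ to pass from $\supp\F f$ to $\supp\F^{-1}f$. The momentary bookkeeping worry you raise about $f$ versus $\F^{-1}f$ is resolved exactly as the paper does it, by reading all estimates off $f|_{\R^d}$ itself.
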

\begin{proof}
It is classical, and easy to see, that (a) implies (b).

As to the converse, assume that $f$ is a rapidly decreasing entire function on $\C^d$ of exponential type corresponding to $A$.
Suppose that $\lambda_0\in \supp \F f$, but $\lambda_0\not\in A$. Then, since $A$ is convex and closed, and $\{\lambda_0\}$ is
convex and compact, by a standard separation result there exists $\xi _0\in \R^d$,
such that $\lambda_0\cdot \xi_0> H_A(\xi_0)$. Consider the polynomial $P_{\xi_0}(\lambda) =  \lambda\cdot \xi_0$, for $\lambda\in \R^d$.
By Lemma~\ref{lem:liminfSchwartz} and Lemma~\ref{lem:expestimates},
\begin{align*}
\sup _{\lambda \in \supp \F f} |\lambda\cdot\xi_0| &= \sup _{\lambda \in \supp \F f} |P_{\xi_0}(i\lambda)|\le
\liminf _{n\to \infty}\| \partial _{\xi_0} ^n f\| _\infty ^{1/n}\\
&\le
\limsup _{n\to\infty}\|\partial _{\xi_0} ^n f \|_\infty ^{1/n} \le H_A (\xi_0),
\end{align*}
so that $H_A(\xi_0)<\lambda_0\cdot\xi_0\leq |\lambda_0\cdot\xi_0|\leq H_A(\xi_0)$, a contradiction.
We conclude that $\F f$ has support in $A$.
Since $\F ^{-1}f(x)= \F f(-x)$, for $x\in\R^d$, the symmetry of $A$ implies that $\F^{-1}f$ also has support contained in $A$.
\end{proof}

The complex symmetric Paley--Wiener theorem for $L^2$-functions
follows in the same fashion, using Lemma~\ref{lem:expestimates}
and Proposition~\ref{prop:Lpkeyprop}; the latter as a
stronger replacement of Lemma~\ref{lem:liminfSchwartz} in the
proof of Theorem~\ref{prop:complexPWSchwartz} above.
The proof contains one of the two applications of the
Plancherel theorem in the present paper, the other one being in
the proof of Theorem~\ref{thm:limL2}.

\begin{theorem}\label{prop:complexPWL2}
Let $A$ be a non-empty compact, convex and symmetric subset of
$\R^d$, and suppose $f$ is a measurable function on $\R^d$
representing a tempered distribution. Then the following are
equivalent:
\begin{enumerate}
\item[(a)] $f$ is the Fourier transform of a function in $L^2(\R^d)$ with support contained in $A$;
\item[(b)] $f$ is in $L^2(\R^d)$ and extends to an entire function on $\C^d$ of exponential type corresponding to $A$.
\end{enumerate}
\end{theorem}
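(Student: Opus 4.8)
The plan is to run the proof of Theorem~\ref{prop:complexPWSchwartz} almost verbatim, with Proposition~\ref{prop:Lpkeyprop} (applied with $p=\infty$) playing the role that Lemma~\ref{lem:liminfSchwartz} played there, and with the Plancherel theorem supplying the passage between an $L^2$-function and its Fourier transform.

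The implication (a)$\Rightarrow$(b) is classical. If $g\in L^2(\R^d)$ is supported in the compact set $A$, then $g\in L^1(\R^d)$ by Cauchy--Schwarz, so $f=\F g$ is given by the absolutely convergent integral $(2\pi)^{-d/2}\int_A g(x)e^{-i\lambda\cdot x}\,dx$. Substituting $z\in\C^d$ for $\lambda$ and differentiating under the integral sign shows that $f$ extends to an entire function on $\C^d$, and from $|e^{-iz\cdot x}|=e^{(\Im z)\cdot x}$ one reads off $|f(z)|\le(2\pi)^{-d/2}\|g\|_1 e^{H_A(\Im z)}$, i.e.\ exponential type corresponding to $A$; that $f\in L^2(\R^d)$ is the Plancherel theorem.

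For the converse, assume $f\in L^2(\R^d)$ extends to an entire function of exponential type corresponding to $A$. By Plancherel, $g:=\F^{-1}f$ lies in $L^2(\R^d)$ and satisfies $\F g=f$, so it suffices to prove that $\supp g\subset A$. Since $\F^{-1}f$ is the reflection of $\F f$, we have $\supp g=-\supp\F f$, and the symmetry of $A$ reduces matters to showing $\supp\F f\subset A$. Fix $\xi\in\R^d$. By Lemma~\ref{lem:expestimates} (which uses only that $A$ is compact and symmetric) the derivatives $\partial_\xi^n f$ are bounded on $\R^d$ and $\limsup_{n\to\infty}\|\partial_\xi^n f\|_\infty^{1/n}\le H_A(\xi)$. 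This boundedness lets us apply Proposition~\ref{prop:Lpkeyprop} with $p=\infty$ to the polynomial $P_\xi(\lambda)=\lambda\cdot\xi$, for which $P_\xi(\partial)=\partial_\xi$, obtaining $\liminf_{n\to\infty}\|\partial_\xi^n f\|_\infty^{1/n}\ge R(P_\xi,\F f)=\sup_{\lambda\in\supp\F f}|\lambda\cdot\xi|$. Hence $\sup_{\lambda\in\supp\F f}|\lambda\cdot\xi|\le H_A(\xi)$ for every $\xi\in\R^d$, and the separation argument from Theorem~\ref{prop:complexPWSchwartz} closes the proof: a point $\lambda_0\in\supp\F f\setminus A$ would, since $A$ is closed and convex, yield $\xi_0$ with $\lambda_0\cdot\xi_0>H_A(\xi_0)$, so that $H_A(\xi_0)<\lambda_0\cdot\xi_0\le|\lambda_0\cdot\xi_0|\le H_A(\xi_0)$, a contradiction.

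Since Lemma~\ref{lem:expestimates} and Proposition~\ref{prop:Lpkeyprop} already carry the analytic weight, the main point requiring care is bookkeeping. Proposition~\ref{prop:Lpkeyprop} requires $P(\partial)^n f\in L^p$ for all $n\in\N_0$; because $f$ is only of exponential type, not rapidly decreasing, the $\partial_\xi^n f$ are known to be bounded but not a priori in $L^2$, which is why one must take $p=\infty$ rather than $p=2$ (note $f=\partial_\xi^0 f$ is itself bounded on $\R^d$, being of exponential type with $A$ compact). The Plancherel theorem is then precisely what turns the support statement $\supp\F f\subset A$ into the assertion (a) that $f$ is the $L^2$-Fourier transform of an $L^2$-function supported in $A$. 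The remaining points are routine sign checks: that $P_\xi(\partial)e^{i\lambda\cdot x}=(i\lambda\cdot\xi)e^{i\lambda\cdot x}$ identifies $P_\xi(\partial)$ with $\partial_\xi$, and that $\F^{-1}f$ is the reflection of $\F f$, so that the symmetry of $A$ may be invoked. I anticipate no obstacle beyond these verifications.
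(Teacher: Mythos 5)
Your proposal is correct and follows the paper's own argument essentially verbatim: Plancherel to get $\F^{-1}f\in L^2(\R^d)$, Lemma~\ref{lem:expestimates} combined with the $p=\infty$ case of Proposition~\ref{prop:Lpkeyprop} applied to $P_\xi(\lambda)=\lambda\cdot\xi$, and then the separation argument from Theorem~\ref{prop:complexPWSchwartz} together with the symmetry of $A$. The bookkeeping points you flag (boundedness of $f=\partial_\xi^0 f$, the need for $p=\infty$) are exactly the right ones.
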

\begin{proof}
It is obvious that (a) implies (b).
Assuming (b), the Plancherel theorem yields that $\F^{-1}f\in L^2(\R^d)$.
Applying Lemma~\ref{lem:expestimates}, and the case $p=\infty$ of Proposition~\ref{prop:Lpkeyprop}, to $f$ yields that, for all $\xi\in\R^d$, $\sup_{x\in\supp\F f}|\xi\cdot x|\leq H_A(\xi)$.
A separation argument as in the proof of Theorem~\ref{prop:complexPWSchwartz} shows that $\supp \F f$ is contained in $A$, and then the symmetry of $A$ implies that the same is true for $\supp \F^{-1} f$.
\end{proof}

\begin{remark}
Let $A$ be a non-empty compact, convex and symmetric subset of
$\R^d$. Let $1\le p\le \infty$, and define $\H^{p}_{A} (\C ^d)$
as the space of entire functions $f$ of exponential type
corresponding to $A$, whose restriction to $\R^d$
belongs to $L^{p}(\R^d)$. As in the proof of
Theorem~\ref{prop:complexPWL2} above, applying
Lemma~\ref{lem:expestimates}, and the case $p=\infty$ of
Proposition~\ref{prop:Lpkeyprop}, to such $f$ yields that the
distribution $\F f$ (and hence also $\F ^{-1}f$) has support in
$A$, for all $f\in\H^{p}_{A} (\C^d)$. In particular, for $1\le p\le
2$ with conjugate exponent $q$, we have established by real
methods that $\F$ and $\F ^{-1}$ map $\H^{p}_{A} (\C^d)$ into a
subspace of $L^q(\R^d)$ consisting of functions with
distributional support in $A$.
\end{remark}

The symmetric complex Paley--Wiener theorem for distributions
with compact support can be derived using the real Proposition~
\ref{prop:realPWtempdist2}.

\begin{theorem}\label{prop:complexPWcompdist}
Let $A$ be a non-empty compact, convex and symmetric subset of $\R^d$, and suppose $f:\R^d\to\C$. Then the following are equivalent:
\begin{enumerate}
\item[(a)] $f$ is the Fourier transform of a distribution with support contained in $A$;
\item[(b)] $f$ extends to a entire function on $\C^d$, and
    there exists an integer $N\in\N_0$, and a constant $C$, such that
    $|f(z)|\leq C (1+|z|)^N e^{H_A(\Im z)}$, for all
    $z\in\C^d$.
\end{enumerate}
If this is the case, then the integer $N$ in \textup{(b)} can be taken to be the order of $\F^{-1}f$.
\end{theorem}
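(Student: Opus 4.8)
The plan is to mirror the proofs of Theorems~\ref{prop:complexPWSchwartz} and~\ref{prop:complexPWL2}, replacing the role of Lemma~\ref{lem:liminfSchwartz} or Proposition~\ref{prop:Lpkeyprop} by Proposition~\ref{prop:realPWtempdist2}, and the role of Lemma~\ref{lem:expestimates} by a version adapted to entire functions of polynomial-times-exponential growth. The implication (a)$\Rightarrow$(b) is classical and routine: if $f=\F T$ with $T$ a distribution of order $N$ supported in $A$, then testing $T$ against $e^{-iz\cdot x}$ (suitably localized by a cutoff equal to $1$ near $A$) gives the estimate in (b) with $N$ the order of $T=\F^{-1}f$, using that $H_A(\Im z)=\max_{a\in A}a\cdot(-\Im z)$ controls $|e^{-iz\cdot a}|$ on $A$. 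So the content is in (b)$\Rightarrow$(a).

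Assuming (b), first I would record that $\F^{-1}f$ is a well-defined tempered distribution and that $f\in C^\infty(\R^d)$ (it is the restriction of an entire function), so that Proposition~\ref{prop:realPWtempdist2} is applicable to $f$. Next I would prove the analogue of Lemma~\ref{lem:expestimates} in this setting: for each $\xi\in\R^d$ and $n\in\N$, applying the Cauchy integral formula on a circle of radius $r$ to $t\mapsto f(x+t\xi)$ gives
\[
|\partial_\xi^n f(x)|\le C\,\frac{n!}{r^n}\,(1+|x|+r|\xi|)^N e^{r H_A(\xi)}
\]
for all $x\in\R^d$, using the symmetry of $A$ exactly as in Lemma~\ref{lem:expestimates} to turn $\max_{|t|=r}H_A(\Im(t\xi))$ into $rH_A(\xi)$. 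Choosing $r=n/H_A(\xi)$ when $H_A(\xi)\neq 0$ (and letting $r\to\infty$, which forces $\partial_\xi^n f\equiv 0$, when $H_A(\xi)=0$), Stirling's formula then yields, for each fixed $\xi$, a bound of the shape $|\partial_\xi^n f(x)|\le C_\xi\, n^N (H_A(\xi)+\delta)^n (1+|x|)^N$ valid for all $n\in\N$ and $x\in\R^d$, for any $\delta>0$ (the extra $\delta$ absorbs the polynomial-in-$r$ nuisance factor $(1+|x|+r|\xi|)^N$, which contributes only a factor $n^N$ after the choice of $r$, and is harmless). Applying Proposition~\ref{prop:realPWtempdist2} with the first-degree polynomial $P_\xi(\lambda)=\lambda\cdot\xi$ gives $R(P_\xi,\F f)\le H_A(\xi)+\delta$, and letting $\delta\downarrow 0$ gives $\sup_{x\in\supp\F f}|\xi\cdot x|\le H_A(\xi)$ for every $\xi\in\R^d$.

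From here the argument is identical to the end of the proof of Theorem~\ref{prop:complexPWSchwartz}: if some $\lambda_0\in\supp\F f$ were outside the compact convex set $A$, a standard separation theorem produces $\xi_0$ with $\lambda_0\cdot\xi_0>H_A(\xi_0)$, contradicting $|\lambda_0\cdot\xi_0|\le\sup_{x\in\supp\F f}|\xi\cdot x|\le H_A(\xi_0)$; hence $\supp\F f\subset A$, and by symmetry of $A$ and the relation $\F^{-1}f(x)=\F f(-x)$ also $\supp\F^{-1}f\subset A$, so $f=\F(\F^{-1}f)$ with $\F^{-1}f$ a compactly supported distribution, proving (a). For the last sentence, the order of $\F^{-1}f$ appears as the $N$ in (b) from the (a)$\Rightarrow$(b) direction; conversely, given (b) with exponent $N$, one reads off from the Cauchy estimate that $f$ and its derivatives grow at most polynomially of degree $N$, so $\F^{-1}f$ has order at most $N$, giving equality. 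The main obstacle I anticipate is purely bookkeeping: tracking the polynomial factor $(1+|x|+r|\xi|)^N$ through the choice $r=n/H_A(\xi)$ and verifying that it only costs a factor $n^N$ (independent of $x$ up to the allowed $(1+|x|)^N$), so that the hypothesis of Proposition~\ref{prop:realPWtempdist2} is genuinely met with a single exponent $N$; there is no real conceptual difficulty, since symmetry of $A$ handles the only delicate point (the passage from $\max_{|t|=r}H_A(\Im t\xi)$ to $rH_A(\xi)$) just as in Lemma~\ref{lem:expestimates}.
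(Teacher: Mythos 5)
Your proposal is correct and follows essentially the same route as the paper: Cauchy's formula on circles of radius $r$, symmetry of $A$ to reduce $\max_{|t|=r}H_A(\Im(t\xi))$ to $rH_A(\xi)$, the choice $r=n/H_A(\xi)$ plus Stirling, then Proposition~\ref{prop:realPWtempdist2} for the degree-one polynomials $P_\xi$ and the separation argument. The only (immaterial) difference is bookkeeping: you absorb the nuisance factor $(1+|x|+r|\xi|)^N$ and the Stirling $\sqrt{n}$ by relaxing $R$ to $H_A(\xi)+\delta$ and letting $\delta\downarrow 0$, whereas the paper keeps $R=H_A(\xi)$ exactly, raises the exponent to $N+1$, and invokes the remark that \eqref{eq:sufflargeequation} need only hold for all but finitely many $n$.
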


\begin{proof}
We need only prove that (b) implies (a), the rest being
obvious. Fix $\xi\in\R^d$. We claim that there exists a
constant $C^\prime$ such that, for all $x\in\R^d$ and $n>N$,
\begin{equation}\label{eq:claimedinequality}
|\partial_\xi^n f(x)|\leq C^\prime n^{N+1}H_A(\xi)^n(1+|x|)^{N+1}.
\end{equation}
One this has been established, we infer from
Proposition~\ref{prop:realPWtempdist2} and the subsequent remark,
that, for all $\xi\in\R^d$, $\max_{\lambda\in\supp \F^{-1}
f}|\lambda\cdot\xi|\leq H_A(\xi)$. Statement (a) then follows
from a separation argument as in the proof of
Theorem~\ref{prop:complexPWSchwartz}. As to
\eqref{eq:claimedinequality}, a contour integral as in the
proof of Lemma~\ref{lem:expestimates} yields that, for all
$x\in\R^d$, $n\in\N$, and $r>0$,
\[
|\partial_\xi^n f(x)|\leq C \frac{n!}{r^n}(1+|x|+r|\xi|)^N e^{rH_A(\xi)},
\]
where $C$ is as in (b). If $H_A(\xi)=0$, then, as $n>N$,
letting $r\to\infty$ establishes \eqref{eq:claimedinequality}.
If $H_A(\xi)\neq 0$, taking $r = n/H_A (\xi)>0$ shows that, for
all $x\in\R^d$ and $n\in\N$,
\begin{align*}
|\partial_\xi^n f(x)|&\leq  C \frac{n!e^n}{n^n}H_A(\xi)^n\left(1+|x|+\frac{n|\xi|}{H_A(\xi)}\right)^N\\
&\leq C n^N \frac{n!e^n}{n^n}H_A(\xi)^n\left(1+|x|+\frac{|\xi|}{H_A(\xi)}\right)^N,
\end{align*}
and then Stirling's formula yields \eqref{eq:claimedinequality}.

\end{proof}

\section{Literature review}\label{sec:comparison}

In Section~\ref{sec:realPW}, we have established real
Paley--Wiener theorems for the Fourier transform, and in
Section~\ref{sec:complexPW} these have been used to derive complex
Paley--Wiener theorems without domain shifting. We will now
describe the historical development of the field and compare our results with the literature.

In the late fifties and early sixties, the Russian school studied families of test function spaces on the real line
satisfying certain duality relations under the Fourier transform,
the so-called Gelfand--Shilov spaces, see \cite[Chapter~4]{GeSchi}.
For example, the Gelfand--Shilov spaces $S_\alpha^\beta$ $(\alpha, \,\beta \ge 0)$ of type $S$ consist of all smooth functions
$f$ on $\R$ satisfying
\begin{equation}\label{gelfand--shilov}
| x^k f^{(q)}(x)| \le C A^k B^q k^{k\alpha} q^{q \beta}\quad(x\in \R, k\in \N_0, q \in \N_0),
\end{equation}
for constants $A,B$ and $C$ depending on the function $f$. We note that $\F (S_\alpha^\beta) = S^\alpha_\beta$, and that
$S_0^\beta$ consists of functions with compact support.
The growth estimates (\ref{gelfand--shilov}) are similar to the ones in Theorem~\ref{thm:realPWSchwartz}(b),
but are concerned with repeated application of two operators rather than one as in our case, and the differential operator is always monomial.
The $L^p$-norms for $1\leq p<\infty$ are furthermore not considered in \cite{GeSchi}.
In spite of the difference with later work by other authors, some of the results on the Gelfand--Shilov spaces in \cite{GeSchi}
should probably be thought of as the first real Paley--Wiener type theorems.

In 1990, see \cite{Ba1}, Bang studied the growth of derivatives of an
$L^p$-function $f$ on the real line, and, assuming that all derivatives of $f$ are also $L^p$-functions,
found the relation, \cite[Theorem~1]{Ba1},
\begin{equation}\label{bangfirst}
 \lim_{n\to \infty}\|  f^{(n)}\| _{p} ^{1/n} = \sup \{|\lambda| \,:\, \lambda \in \supp \F f\} ,
\end{equation}
between the support of the Fourier transform of $f$ and the growth of these derivatives.
This is our Theorem~\ref{thm:limLp} for $d=1$ and $P(x) = x$. We believe that \eqref{bangfirst} is the first result
relating the $L^p$-norm of a repeated application of a differential operator to $f$
to the supremum of the modulus of the corresponding multiplier
on the support of the transform of $f$, i.e.,\,the
subject proper of the present article.
Bang's proof of (\ref{bangfirst}) uses the Kolmogorov and Bernstein inequalities and the complex Paley--Wiener theorem.
There is a simplified approach in \cite{An1}.

Bang, on his own and with coauthors, has since generalized \eqref{bangfirst} to
the Fourier transform in Euclidean space in higher dimensions, as well as to Orlicz spaces and Lorentz spaces
(with the appropriate associated operators), see \cite{Ba12}, \cite{Ba2},
\cite{BaY}, \cite{Ba1995b}, \cite{Bang}, \cite{Ba21}, \cite{Ba22}, \cite{Ba3}, \cite{Ba4}, \cite{Ba5},
\cite{Ba6}, \cite{BMa}, \cite{BMo1} and \cite{BMo2}.
In \cite{Ba1}, Bang also covers the case of $L^p$-functions on the one-dimensional torus for $P(x)=x$,
again using a complex Paley--Wiener theorem.

An approach using the Plancherel formula and intertwining properties was suggested by Tuan in 1995, see \cite{Tu0} and \cite{Tu1}.
The philosophical approach to more general set-ups is given by
\cite{Tu00} and \cite{Tu2}, in particular \cite[Theorem 1]{Tu2}, which explains, also in higher
dimensions, why the existence of such generalizations is
plausible for $p=2$ in the presence of a Plancherel theorem, at
least for functions in a suitable subspace of the $L^2$-space
under consideration.
The new approach made it possible for Tuan and various coworkers to generalize Bang's result to several other integral transforms,
mostly defined via differential equations (Dirac, Sturm--Liouville,...) on the real line, see
\cite{Al1}, \cite{Al2}, \cite{AlT1}, \cite{AlT2},
\cite{Tu01}, \cite{Tu02}, \cite{Tu03}, \cite{Tu04}, \cite{Tu3}, \cite{Tu4}, \cite{TuIS1}, \cite{TuIS2},
\cite{TuZa0}, \cite{TuZa1} and \cite{TuZa}.

From 2000 onwards, the methods by Bang and Tuan, and the simplified approach from \cite{An1}, have been used to
find real Paley--Wiener type theorems for a number of integral transform. We refer to \cite{Ab},
\cite{An-1}, \cite{An0}, \cite{An01}, \cite{An2}
\cite{An3}, \cite{An4}, \cite{AdJ1},
\cite{BBM}, \cite{CT}, \cite{COT}, \cite{JT}, \cite{MT}, \cite{OT},
\cite{Th} for more details. Finally, we mention results by
Pesenson in a similar vein, see \cite{Pes1}, \cite{Pes2}, \cite{Pes3}, \cite{Pes4}, \cite{Pes5}.
Here, in an $L^2$-context, bandlimited functions, i.e., functions with a compactly supported Fourier transform,
on various manifolds are described by generalized Bernstein inequalities, and applications
to sampling theory  are given.

We will now discuss the most important contributions by Bang and Tuan, and compare them with our results.

After the initial work in one dimension in \cite{Ba1}, Bang considered $L^p$-functions in higher dimensions
in \cite{Ba2}, but still for monomial operators only. The
main result is the following: Let $0<p\le \infty$, $f\in
L^p(\R^d)$, and assume that $\supp \F f$ is compact. Then
\begin{equation}\label{bang2}
\lim _{|\alpha| \to \infty} \left (\|\partial ^\alpha f\|_p/
\sup _{\lambda \in \supp \F f}|\lambda^\alpha|\right )^{1/|\alpha|} =1,
\end{equation}
where $\alpha$ is a multi-index, and $\lambda^\alpha$ has the
usual meaning. The proof is rather technical, and uses, e.g.,
Sobolev theory for elliptic equations. It is further shown in
\cite{Ba1995b}, that one can let $p$ depend on $\alpha$ by
choosing $1\leq p_\alpha\leq\infty$, and still obtain similar
results.

The step to arbitrary polynomial operators and also to
distributions was taken by Bang in \cite{Ba3}, which
is not concerned with $L^p$-norms, but with pointwise
estimates. Some of the proofs are again rather technical, using
Sobolev theory for elliptic equations in combination with
structure theory for distributions. One of the main results,
\cite[Theorem 1]{Ba3}, which is established using these
techniques is as follows: Let $K$ be an arbitrary compact set
in $\R^d$. Then a tempered distribution $T$ has support in $K$
if, and only if, $\F T$ is a smooth function with the property
that there exists an integer $N\in\N_0$, such that, for every $\delta >0$,
there exists a constant $C_{\delta}$, such that
\begin{equation}\label{bang3}
|P (i\partial) \F T (\lambda)| \le C_{\delta} (1+|\lambda|)^{N}\sup_{z \in K_{(\delta)}}|P(z)|
\qquad (\lambda\in \R^d),
\end{equation}
for all polynomials $P$, where $K_{(\delta)}$ is defined as the
$\delta$-neighborhood of $K$ in $\C^d$. There is a similar
result, \cite[Theorem 2]{Ba3}, for Schwartz functions. While
these two results certainly yield information about the
support they are of a different flavor than ours, since $\C^d$ is still involved and our results are entirely formulated on $\R^d$.
Furthermore, such information can only be obtained once one
controls the left hand side for all polynomials, with a
universal constant on the right hand side. This may not always
be feasible, and our single polynomial results, giving also
non-compact information, can then be more practical.

The
results in \cite{Ba3} which are closest to ours, are Theorem~5
for compactly supported distributions, and Theorem~6 for
Schwartz functions. Theorem~5 follows from our
Propositions~\ref{prop:realPWtempdist1}
and~\ref{prop:realPWtempdist2}, but its proof in \cite{Ba3} is
much more involved. Conversely, Bang's Theorem~5 implies
our Proposition~\ref{prop:realPWtempdist1}, but not our
Proposition~\ref{prop:realPWtempdist2}, which has no compactness
assumptions. Bang's Theorem~6 is equivalent to our
Theorem~\ref{thm:realPWSchwartz}, and has a proof in the same
spirit as our proofs.\footnote{There appears to be a slight
mistake in the formulation of \cite[Theorem~6]{Ba3}. In our
opinion a factor $m^N$ should be added to the right hand side
of (3.13) in \cite{Ba3}, corresponding to the factor $n^N$ in
our Theorem~\ref{thm:realPWSchwartz}. This factor takes into
account that the number of terms in the second summation on
page~28 of \cite{Ba3} depends on $m$. There are $m^{|\nu|}$
terms and this needs to be estimated from above by $m^N$.} Bang
was also the first to give reconstructing theorems for the
support of distributions and functions in special cases. He has
results in two contexts: using monomials \cite[Theorems~3,
4]{Ba3}, and using polynomials of a given maximal degree
\cite[Theorems~7, 8]{Ba3}. Our reconstruction results,
Theorem~\ref{thm:reconstructsupport} for functions and
Theorem~\ref{thm:reconstructsupportcompdist} for distributions,
have simpler proofs and are more general, as they are
applicable to arbitrary sets of polynomials.

A further step was taken by Tuan in \cite{Tu1}, where
he studies $L^p$-functions in arbitrary dimension, but also
includes repeated application of some non-monomial operators, a
case not considered previously by Bang. Most of the results are for
$p=2$, as, e.g., Theorems~1 and~3 in \cite{Tu1} for the
operators $\Delta$ and $\partial_\xi$, respectively. For
general $p$, he obtains \cite[Theorem 4]{Tu1}, which reads as
follows: Let $P$ be a non-constant polynomial; then the Fourier
transform $\F f$ of $f\in \S(\R^d)$ vanishes outside $\{x\in
\R^d : |P(x)|\le 1\}$ if, and only if,
\[
\limsup _{n\to \infty} \| P(i\partial )^n f\| _p ^{1/n} \le 1,\quad 1\le p\le \infty.
\]
This is a consequence of the stronger
Theorem~\ref{thm:limSchwartz} in the present paper. The proof
in \cite{Tu1} uses the Hausdorff--Young inequality, the
Plancherel theorem and H\"older's inequality, and it is not
obvious how this method could adapted to yield our
Theorem~\ref{thm:limLp} for non-smooth $L^p$-functions, for
which we have, in fact, not found any previously occurring
related result for non-monomial operators. For comparison we
also remark that, although the statement itself is correct for
arbitrary polynomials, in our opinion the proof of
\cite[Theorem 4]{Tu1} only establishes the result for
polynomials which are real valued -- it is not stated in
\cite{Tu1} which polynomials are considered. The reason is
that, when passing from equation (37) to (38) in \cite{Tu1}, it
is used that
$P^k(-i\partial)\overline{f}=\overline{P^k(i\partial)f}$
for all $k$ and $f$, and for this one needs $P$ to be real
valued. The same remark applies to the transition from equation
(29) to (30) in the proof of \cite[Theorem 2]{Tu2}, while our
Theorem~\ref{thm:limSchwartz} is stated for
arbitrary non-constant polynomials: in our opinion, the proof
in \cite{Tu2} only establishes \cite[Theorem~2]{Tu2} for real
valued polynomials. This restriction originates from the
application of the Plancherel theorem in both proofs, whereas
our proofs of the main results in the present paper, such as
Theorem~\ref{thm:limSchwartz} for arbitrary polynomials, are
all based on the inversion theorem. This different approach
avoids this limitation, which we think to be inherent to proofs
based on a Plancherel theorem.

The results and approach mentioned above were in \cite{Tu3}
and \cite{MT} generalized to the Dunkl transform,
where we recall that the Dunkl transform, \cite{Du}, \cite{dJ1},
is a deformation of the Fourier transform that
specializes to the Fourier transform if all the so-called root
multiplicities are zero. This also means that we believe the results in \cite{Tu3}
and \cite{MT} are valid only for real valued polynomials.

Theorem~3 in \cite{Tu2} is
concerned with repeated application of $e^\Delta$ to Schwartz
functions in arbitrary dimension in an $L^p$-context. We have no analogue of this
result which, when interpreted, in our opinion gives additional evidence that
some real Paley--Wiener theorems express the validity of a local
spectral radius formula, a point we will argue in Section~\ref{sec:perspectives}. In the one-dimensional case
and for Schwartz functions in an $L^p$-context,
\cite[Theorem~4]{Tu2} involves repeated application of another
operator not of the form $P(\partial)$. There is also a result
for the asymptotic action of the heat semigroup on $L^2(\R^d)$ available, see \cite[Theorem~2]{Tu1}.

As to the complex theorems, apart from the results in
Section~\ref{sec:complexPW}, we are not aware of derivations of
complex Paley--Wiener theorems from real ones, other than the
results for the one-dimensional Dunkl transform in \cite{AdJ1}.
The idea seems to be still very young and needs further
testing. A context for a possible application is the following:
In higher dimension, the results for complex Paley--Wiener
theorems for the Dunkl transform in \cite{dJ2} and \cite{ThaXu}
seem to be not yet optimal, as the version for supports in
compact convex invariant sets is thought to be true in general,
but has only been proved in special cases \cite{dJ2}. Even
though -- arguing analogously to the remarks in
Section~\ref{sec:complexPW} for the Fourier transform -- an
approach through real methods can only be expected to give a
complex theorem for supports in a symmetric compact convex
invariant set, such a version of a general complex theorem
would already be a considerable improvement. With the necessary
real Paley--Wiener theorem already established (the case of
homogenous polynomials of degree one and $p=\infty$ of
\cite[Theorem~4.1]{MT}), the future may learn whether this can
actually be achieved in this fashion, analogously to the
one-dimensional case in \cite{AdJ1}. Similarly, the
applicability of this relatively new approach to complex
theorems could be investigated for transforms as in \cite{An0}
and \cite{TuZa}, where some real theorems are also already
available.

\section{Perspectives and connection with local spectral theory}\label{sec:perspectives}

In this section we mention some possible future developments, one of which is the interpretation of
some real Paley--Wiener theorems as local spectral radius formulas.

Firstly, as was already mentioned in
Sections~\ref{sec:intro}, \ref{sec:complexPW}, and
\ref{sec:comparison}, we hope that the idea of deriving complex
Paley--Wiener theorems from real ones will prove to be a useful
one in other contexts as well, especially if the usual domain
shifting is not applicable. This is indeed the case for the
one-dimensional Dunkl transform, see \cite{AdJ1}, and the higher
dimensional situation can be a new testing ground.

Secondly, there are potential applications in support theorems
for partial differential equations with constant coefficients.
For example, if $f$ and $g$ are compactly supported smooth
functions, such that $P(\partial)f=g$, then the convex hull of
the support of $f$ equals that of $g$ as follows, e.g., from
the theorem of supports \cite[Theorem~4.3.3]{Hoer}. This seems
to be the best result available. However,
Theorem~\ref{thm:limSchwartz} gives an opportunity to obtain
extra information by choosing a polynomial $Q$ and estimating
\[
\lim_{n\to\infty} \Vert Q(\partial)^n \F f(\lambda)\Vert_p^{1/n}=\lim_{n\to\infty} \left\Vert Q(\partial)^n\frac{\F g(\lambda)}{P(i\lambda)}\right\Vert_p^{1/n}
\]
from above, for some $1\leq p\leq\infty$. If this limit is
known to be at most $M$, then the support of $f$ must be
contained in $\{x\in\R^d : |Q(-ix)|\leq M\}$. Carrying this out
in a concrete situation requires a judicious choice for $Q$,
probably $p=\infty$, and a skillful handling of the
combinatorics. The behavior of $P(i\lambda)$ near its zero
locus will also enter the computation, but for this general
lower bounds are available in \cite[Lemma~5.7]{TrevesLPDE}.
Notwithstanding the complications that will arise when pursuing this
idea we mention it nevertheless, since in principle it can
yield information which to our knowledge can not be obtained
otherwise.

Thirdly, as already mentioned in Section~\ref{sec:comparison},
Bang also obtained an elementary version of a real
Paley--Wiener theorem for the one-dimensional torus in
\cite{Ba1}. This has not been developed further, but it seems
very likely that the methods of proof in the present paper can be adapted to establish real Paley--Wiener theorems for
$\R^{d_1}\times\T^{d_2}$. Is it also possible to do this for Lie groups other than the connected abelian groups,
with the operators $P(\partial)$ replaced by the operators in
the center of the universal enveloping algebra? The answer is at least affirmative for compact connected groups \cite{AdJ2}.

Fourthly, we think that the relation between real Paley--Wiener
theory and local spectral theory deserves further attention, in particular the interpretation of
Theorems~\ref{thm:limSchwartz} and~\ref{thm:limLp} as
statements belonging to that field. We will now elaborate on
this, starting with bounded operators on a Banach space, where
the results of our interest are already available in general, see, e.g., \cite{LaurNeu}, from which the
material below on the bounded case is mostly taken.

If $X$ is a Banach space, $T$ a bounded operator on $X$, and
$x\in X$, then $z_0\in\C$ is said to be in the local resolvent
set of $T$ at $x$, denoted by $\rho_T(x)$, if there is an open neighborhood $U$ of $z_0$
in $\C$, and an analytic function $\phi: U\to X$, sending $z$ to
$\phi_z$, such that
\begin{equation}\label{eq:localresolvent}
(T-z)\phi_z=x\qquad(z\in U).
\end{equation}
The local spectrum $\sigma_T(x)$ of $T$ at $x$ is the
complement of $\rho_T(x)$ in $\C$. Clearly
$\sigma_T(x)\subset\sigma (T)$, where $\sigma (T)$ is the
spectrum of $T$. The operator $T$ is said to have the
single-valued extension property (SVEP) if, for every
non-empty open set $U\subset\C$, the only analytic solution $\phi:U\to X$ of the
equation $(T-z)\phi_z=0$ $(z\in U)$ is the zero solution. This is equivalent to requiring that the
analytic local function $\phi$ in \eqref{eq:localresolvent}
is uniquely determined, so that one can speak of ``the'' local
resolvent function $\phi$. For an operator $T$, having SVEP is also equivalent to
0 being the only element in $X$ with empty local spectrum
\cite[Proposition~1.2.16]{LaurNeu}. By
\cite[Proposition~1.3.2]{LaurNeu}, if $T$ has SVEP, then
$\sigma(T)$ is the union of all local spectra $\sigma_T(x)$.

The local spectral radius $r_T(x)$ of $T$ at $x$ is defined as
\[
r_T(x)=\limsup_{n\to\infty}\Vert T^n x\Vert^{1/n}.
\]
If $T$ has SVEP, then by \cite[Proposition~3.3.13]{LaurNeu}, the local spectral radius formula
\begin{equation}\label{eq:localformulawithlimsup}
r_T(x)=\max\{|z| : z\in\sigma_T(x)\}\qquad(x\in X),
\end{equation}holds, and by \cite{Vrb, Vasipaper, PruPu}, the set of those $x$ in $X$ for
which $\sigma_T(x)\neq \sigma(T)$ is then of the first category
in $X$. By \cite[Proposition~3.3.14]{LaurNeu}, it is always
true, also in the absence of SVEP, that the set of $x\in X$ for
which $r_T(x)$ is equal to the spectral radius of $T$ is of the
second category in $X$. If $T$ has Bishop's property ($\beta$)
(see \cite[Definition~1.2.5]{LaurNeu} -- it is immediate that
property ($\beta$) implies SVEP), then, by
\cite[Proposition~3.3.17]{LaurNeu},
$r_T(x)=\lim_{n\to\infty}\Vert T^n x\Vert^{1/n}$ for all $x$ in
$X$, so that the local spectral radius formula
\eqref{eq:localformulawithlimsup} holds in a stronger form as
\begin{equation}\label{eq:localformulawithlim}
\lim_{n\to\infty}\Vert T^n x\Vert^{1/n}=\max\{|z| : z\in\sigma_T(x)\}\qquad(x\in X).
\end{equation}
Furthermore, a decomposable (see
\cite[Definition~1.1.1]{LaurNeu}) operator has property
($\beta$) by \cite[Theorem~1.2.7]{LaurNeu}, hence
\eqref{eq:localformulawithlim} holds for decomposable
operators.

Now let $X=C(\Lambda)$, where $\Lambda$ is a compact Hausdorff
space, and let $T$ be the operator corresponding to
multiplication by a fixed function $g$. It is shown in
\cite[Example~1.2.11]{LaurNeu} that $T$ is decomposable, and
that $\sigma_T(f)=g(\supp f)$, for all $f\in C(\Lambda)$. As a
consequence of the general theory above, we then know that
\eqref{eq:localformulawithlim} is valid in the form
\begin{equation}\label{eq:localformulacompactHd}
\lim_{n\to\infty}\Vert g^n f\Vert_\infty^{1/n}=\max\{|g(\lambda)| : \lambda\in\supp f\}\qquad(f\in C(\Lambda)).
\end{equation}
Naturally this is also easily verified directly. The point is,
however, that local spectral theory tells us without further
computation that \eqref{eq:localformulacompactHd} holds once
the decomposability has been established and the local spectrum
has been determined.

This example makes the statement in
Theorem~\ref{thm:limSchwartz},
\begin{equation}\label{eq:localformulaSchwartz}
\lim_{n\to \infty}\| P(\partial )^n f\| _p ^{1/n} = \sup\{|P(i\lambda)| : \lambda\in\supp \F f\}\qquad(f\in\S (\R^d)),
\end{equation}
and its analogue in Theorem~\ref{thm:limLp} more or less
plausible, since under the Fourier transform $P(\partial)$
corresponds to multiplication by $P(i\lambda)$. In fact, it
raises the question whether Theorems~\ref{thm:limSchwartz}
and~\ref{thm:limLp} can conceptually be interpreted as
statements in local spectral theory for unbounded operators.
Are the right hand sides in these theorems the maximum modulus
in the local spectrum of an unbounded operator, and is there,
in addition, an a priori result available such as
\eqref{eq:localformulawithlimsup} or
\eqref{eq:localformulawithlim}, resulting in a ``explanation'' of
these theorems? The local spectral theory for unbounded
operators is not as extensive as for the bounded case, and
neither general monographs, such as \cite{Vasilescu} and
\cite{ErdWang}, nor papers dedicated to local spectral theory
for constant coefficient differential operators, such as
\cite{AlRi90,AlRi95,AlRi96, AlRi98}, contain a description of
the local spectrum of an operator $P(\partial)$, or a priori results analogous to
\eqref{eq:localformulawithlimsup} or \eqref{eq:localformulawithlim}. The only result which is
helpful in interpreting \eqref{eq:localformulaSchwartz} that we
are aware of is the following: If one can prove that
$\{P(i\lambda) : \lambda\in\supp \F f\}$ is dense in the local
spectrum  at $f$ of a closed operator in $L^p(\R^d)$ which
agrees with $P(\partial)$ on $\S (\R^d)$, and if this set is
bounded, then \cite[Proposition~IV.3.10]{Vasilescu} yields that
\[
\sup\{|P(i\lambda)| : \lambda\in\supp \F f\} \leq  \limsup_{n\to \infty}\| P(\partial )^n f\| _p ^{1/n} < \infty .
\]
Even though this is only a partial and conditional result, it
supports a conjecture that \eqref{eq:localformulaSchwartz} is
in reality a local spectral radius formula.

We can in fact prove this conjecture for $p=1$, and we will now
proceed to do so. The definition of the local resolvent set and
SVEP in the unbounded case are as above, with the additional
requirement that $\phi_z$ is in the domain of the operator for
all $z$ in $U$.

Let $P$ be a polynomial. Consider, for $1\leq p\leq\infty$, the
associated operator $T_p:C_c^\infty(\R^d)\to L^p(\R^d)$,
defined as $T_p f=P(\partial)f$, for $f\in C_c^\infty(\R^d)$.
Then, by \cite[Section~4.2]{Schechter}, $T_p$ has a closed
extension $\widetilde T_p$ on $L^p(\R^d)$, with domain
$D_{\widetilde T_p}$ equal to all $f\in L^p(\R^d)$
such that $P(\partial)f\in L^p(\R^d)$, and defined as $\widetilde T_p f=P(\partial)f$ on its domain.\footnote{By
\cite[Theorem~4.2.1]{Schechter}, $\widetilde T_p$ is actually
the closure of $T_p$, if $1\leq p<\infty$.}

\begin{lemma}\label{lem:SVEP}
$\widetilde T_1$ has SVEP.
\end{lemma}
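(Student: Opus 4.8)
The plan is to verify the single-valued extension property for $\widetilde T_1$ directly from the definition, exploiting the fact that on $L^1(\R^d)$ the Fourier transform is injective into $C_0(\R^d)$. Suppose $U\subseteq\C$ is a non-empty open set and $\phi:U\to D_{\widetilde T_1}\subseteq L^1(\R^d)$ is analytic with $(\widetilde T_1-z)\phi_z=0$ for all $z\in U$; I must show $\phi\equiv 0$. Applying the Fourier transform (which is continuous from $L^1(\R^d)$ into $C_0(\R^d)$, hence the composition $z\mapsto\F\phi_z$ is again analytic as a $C_0(\R^d)$-valued function) and using the intertwining relation \eqref{intertwining}, the equation becomes $(P(i\lambda)-z)\F\phi_z(\lambda)=0$ for every $\lambda\in\R^d$ and every $z\in U$.

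First I would fix $\lambda\in\R^d$ and read this as: for each $z\in U$, either $P(i\lambda)=z$ or $\F\phi_z(\lambda)=0$. Since $\{z\in U : P(i\lambda)=z\}$ contains at most one point, for all $z\in U$ except possibly $z=P(i\lambda)$ we have $\F\phi_z(\lambda)=0$. Now the scalar function $z\mapsto\F\phi_z(\lambda)$ is analytic on $U$ (it is obtained by composing the analytic $C_0$-valued map with the bounded evaluation functional at $\lambda$), and it vanishes on $U$ minus at most one point, hence by continuity — or by the identity theorem for analytic functions — it vanishes identically on $U$. As $\lambda\in\R^d$ was arbitrary, $\F\phi_z\equiv 0$ on $\R^d$ for every $z\in U$. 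By injectivity of the Fourier transform on $L^1(\R^d)$, this forces $\phi_z=0$ for all $z\in U$, which is exactly SVEP.

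The only point that needs a little care — and the one I would regard as the main obstacle, though it is minor — is the justification that $z\mapsto\F\phi_z$ is analytic as a $C_0(\R^d)$-valued (equivalently, weakly analytic, hence pointwise analytic) function, so that the identity theorem may be applied to the scalar functions $z\mapsto\F\phi_z(\lambda)$. This follows because $\F:L^1(\R^d)\to C_0(\R^d)$ is a bounded linear operator and composition of an analytic Banach-space-valued map with a bounded linear operator is again analytic; then evaluation at a fixed $\lambda$ is a bounded linear functional on $C_0(\R^d)$, so $z\mapsto\F\phi_z(\lambda)$ is scalar analytic. The rest is the elementary observation that a polynomial $P(i\,\cdot\,)$ takes any given value $z$ on a proper (indeed, Zariski-closed, measure-zero) subset of $\R^d$, together with the classical injectivity of the $L^1$ Fourier transform; no hard analysis is involved, and in particular no appeal to the structure of the domain $D_{\widetilde T_1}$ beyond $D_{\widetilde T_1}\subseteq L^1(\R^d)$ is needed.
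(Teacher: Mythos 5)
Your argument is correct and is essentially the paper's own proof: take Fourier transforms to get $(P(i\lambda)-z)\F\phi_z(\lambda)=0$, fix $\lambda$, note that $\F\phi_z(\lambda)$ can be nonzero for at most the single value $z=P(i\lambda)$, and conclude by continuity (the paper) or analyticity (you) in $z$ that it vanishes identically, the key point in both versions being that $\F$ maps $L^1(\R^d)$ injectively into continuous functions so that pointwise evaluation is legitimate. The only cosmetic difference is that you spell out the analyticity of $z\mapsto\F\phi_z(\lambda)$ via boundedness of $\F:L^1\to C_0$ and evaluation functionals, where the paper is content with continuity in $z$; your closing remark about $P(i\,\cdot\,)$ taking a given value on a measure-zero subset of $\R^d$ is not actually needed.
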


\begin{proof}
Suppose $\phi: U\to D_{\widetilde T_1}$ is analytic, and such that $(\widetilde T_1-z)\phi_z=0$, for
all $z$ in some open non-empty $U\subset\C$. Taking Fourier transforms yields $(P(i\lambda)-z)\F \phi_z(\lambda)=0$,
for all $z\in U$ and $\lambda\in\R^d$. Thus, for any fixed $\lambda\in\R^d$, we conclude that $\F \phi_z(\lambda)=0$
for all $z\in U$ with at most one exception, and hence, since $\F \phi_z(\lambda)$ depends continuously on $z$, we
see that $\F \phi_z(\lambda)=0$ for all $z\in U$.
\end{proof}

Note that we only needed the continuity of $\phi$ in the proof
of the preceding lemma, but that it was essential that the
Fourier transform of an element in the domain of $\widetilde
T_1$ is a continuous function.

Although SVEP is strictly speaking not necessary for the
interpretation of \eqref{eq:localformulaSchwartz} as a local
spectral radius formula, the following result is indispensable.
Here we use again that the Fourier transforms are continuous
functions, but not even the continuity of $\phi$ is needed.

\begin{lemma}\label{lem:localresolventissubset}
If $f\in D_{\widetilde T_1}$, then
\[
\rho_{\widetilde T_1}(f)\subseteq\C\setminus\overline{\{P(i\lambda) : \lambda\in\supp \F f\}}.
\]
\end{lemma}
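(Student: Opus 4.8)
The plan is to prove the equivalent inclusion
\[
\overline{\{P(i\lambda) : \lambda\in\supp \F f\}}\subseteq\sigma_{\widetilde T_1}(f),
\]
and for this it is enough, given a point $z_0\in\rho_{\widetilde T_1}(f)$, to produce an open neighbourhood of $z_0$ which is disjoint from $\{P(i\lambda) : \lambda\in\supp \F f\}$: since the complement of such a neighbourhood is closed, it then also contains the closure of that set, whence $z_0$ lies outside the closure.

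So I would fix $z_0\in\rho_{\widetilde T_1}(f)$. By definition of the local resolvent set there are an open $U\ni z_0$ and a map $\phi:U\to D_{\widetilde T_1}$ with $(\widetilde T_1-z)\phi_z=f$ in $L^1(\R^d)$ for all $z\in U$. The first step is to take Fourier transforms of this identity. Since $\phi_z\in D_{\widetilde T_1}$, both $\phi_z$ and $P(\partial)\phi_z$ lie in $L^1(\R^d)$, so by \eqref{intertwining} the identity becomes
\[
(P(i\lambda)-z)\,\F\phi_z(\lambda)=\F f(\lambda)\qquad(z\in U,\ \lambda\in\R^d),
\]
an equality of continuous functions of $\lambda$, hence valid pointwise everywhere.

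The second step --- which is really the whole argument --- is to observe that whenever $\lambda\in\R^d$ satisfies $P(i\lambda)\in U$ one may substitute $z=P(i\lambda)$ in the last display to obtain $\F f(\lambda)=0$. Thus $\F f$ vanishes on the open set $W=\{\lambda\in\R^d : P(i\lambda)\in U\}$, so $W\cap\supp\F f=\emptyset$, that is, $\{P(i\lambda):\lambda\in\supp \F f\}\cap U=\emptyset$. Passing to closures gives $\overline{\{P(i\lambda):\lambda\in\supp \F f\}}\subseteq\C\setminus U\subseteq\C\setminus\{z_0\}$, which is exactly what is needed.

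I do not anticipate a genuine obstacle. No regularity of $z\mapsto\phi_z$ is used --- not even continuity --- only that the resolvent equation holds for \emph{every} $z$ in the neighbourhood $U$, together with the fact (stressed in the note following the statement) that $\F f$ and $\F\phi_z$ are honest continuous functions; this is what legitimises the pointwise substitution $z=P(i\lambda)$ and the step ``vanishing on an open set forces that set out of the support''. The only thing to keep an eye on is the trivial case in which $P(i\,\cdot\,)$ misses $U$ altogether, so that $W=\emptyset$: the conclusion $\{P(i\lambda):\lambda\in\supp \F f\}\cap U=\emptyset$ then holds vacuously and the argument goes through unchanged.
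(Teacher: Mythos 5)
Your proof is correct and follows essentially the same route as the paper: take Fourier transforms of the local resolvent equation, use that $\F\phi_z$ and $\F f$ are continuous to substitute $z=P(i\lambda)$ pointwise, and conclude that $\F f$ vanishes on the open preimage of $U$. You have merely spelled out the paper's concluding ``moment's thought'' (and, like the paper's remark after the lemma, correctly noted that no regularity of $z\mapsto\phi_z$ is needed).
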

Here, and in the sequel, if $A$ is a subset of the complex numbers, then $\overline A$ denotes its closure (and not its conjugate).

\begin{proof}
Suppose that $z_0\in \rho_{\widetilde T_1}(f)$, that $U$ is an
open neighborhood of $z_0$ in $\C$ and that $(\widetilde T_1 -
z)\phi_z=f$ for some analytic $\phi:U\to D_{\widetilde T_1}$.
Taking Fourier transforms yields $(P(i\lambda)-z)\F
\phi_z(\lambda)=\F f(\lambda)$ for all $z\in U$ and
$\lambda\in\R^d$. We conclude from this that, if
$\lambda_0\in\R^d$ is such that
$P(i\lambda_0)\in\rho_{\widetilde T_1}(f)$, then $\F
f(\lambda_0)=0$. A moment's thought shows that this implies the statement in the lemma.
\end{proof}

For general $p$ and $f\in\S (\R^d)$, we can establish the reverse inclusion.

\begin{lemma}\label{lem:localresolventcontains}
Let $1\leq p\leq\infty$, and $f\in\S (\R^d)\subseteq D_{\widetilde T_p}$. Then
\[
\rho_{\widetilde T_p}(f)\supseteq\C\setminus\overline{\{P(i\lambda) : \lambda\in\supp \F f\}}.
\]
\end{lemma}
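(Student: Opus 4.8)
The plan is to show that every $z_0\in\C\setminus\overline{\{P(i\lambda):\lambda\in\supp\F f\}}$ lies in $\rho_{\widetilde T_p}(f)$ by writing down an explicit local resolvent function near $z_0$, obtained by dividing $\F f$ by $P(i\lambda)-z$ on the Fourier side. Since the displayed closed set lies at a positive distance $d_0$ from $z_0$, one fixes a small disc $U=\{z\in\C:|z-z_0|<\delta\}$ (with $\delta$ a suitably small fraction of $d_0$) together with an open set $V=\{\lambda\in\R^d:|P(i\lambda)-z_0|>\delta'\}$ (for an intermediate $\delta'$) so that $\supp\F f\subseteq V$ and so that $|P(i\lambda)-z|$ is bounded below by a positive constant for all $z\in U$ and all $\lambda\in V$. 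Because $\supp\F f$ is a closed subset of the open set $V$, one can choose $\chi\in C^\infty(\R^d)$ with $0\leq\chi\leq1$, $\chi\equiv1$ on $\supp\F f$, and $\supp\chi\subseteq V$; concretely, $\chi$ may be taken of the form $\eta(|P(i\lambda)-z_0|^2)$ for a suitable $\eta\in C^\infty(\R)$.

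First I would verify that, for each $z\in U$, the function $\lambda\mapsto\chi(\lambda)/(P(i\lambda)-z)$ --- understood to be $0$ where $\chi$ vanishes --- is smooth on $\R^d$ and, together with all its partial derivatives, of polynomial growth: on $\supp\chi$ the denominator is bounded below by a fixed positive constant while its derivatives grow polynomially in $\lambda$. Hence it is a multiplier of $\S(\R^d)$, so that
\[
h_z(\lambda):=\frac{\chi(\lambda)}{P(i\lambda)-z}\,\F f(\lambda)\in\S(\R^d),
\]
and therefore $\phi_z:=\F^{-1}h_z\in\S(\R^d)\subseteq D_{\widetilde T_p}$. Taking Fourier transforms, and using that $(P(i\lambda)-z)h_z(\lambda)=\chi(\lambda)\F f(\lambda)=\F f(\lambda)$ --- the last equality because $\chi\equiv1$ on $\supp\F f$ --- gives $(\widetilde T_p-z)\phi_z=f$ for every $z\in U$.

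It then remains to check that $z\mapsto\phi_z$ is analytic from $U$ into $D_{\widetilde T_p}$, which I would do on the Fourier side. For fixed $\lambda$ the scalar function $z\mapsto h_z(\lambda)$ is holomorphic on $U$, and using the uniform lower bound on $|P(i\lambda)-z|$ over $\supp\chi$ and $z$ in a slightly smaller disc one checks that the difference quotients of $z\mapsto h_z$ converge in every Schwartz seminorm; the same computation applies to $z\mapsto P(i\lambda)h_z(\lambda)$. Since $\F^{-1}$ is continuous on $\S(\R^d)$, $\phi_z=\F^{-1}h_z$, and $P(\partial)\phi_z=\F^{-1}(P(i\lambda)h_z)$, this yields analyticity of $z\mapsto\phi_z$ into $\S(\R^d)$, hence into $L^p(\R^d)$ and into $D_{\widetilde T_p}$ equipped with its graph norm. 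Consequently $z_0\in\rho_{\widetilde T_p}(f)$, and since $z_0$ was arbitrary the claimed inclusion follows.

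I expect the only point needing genuine care to be the handling of the non-compactness of $\supp\F f$: one cannot take $\chi\in C_c^\infty(\R^d)$, so $\chi$ must be produced by separating the closed set $\supp\F f$ from the closed set $\{\lambda:|P(i\lambda)-z_0|\leq\delta'\}$ by a smooth function, and one must leave enough room between the relevant level sets of $|P(i\lambda)-z_0|$ so that $\chi/(P(i\lambda)-z)$ stays a Schwartz multiplier uniformly for $z\in U$. Once the cut-off is in place the rest is the routine ``divide on the Fourier side'' computation, with the continuity of the Fourier transform on $\S(\R^d)$ doing the remaining work.
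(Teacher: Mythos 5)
Your proposal is correct and follows essentially the same route as the paper: divide $\F f$ by $P(i\lambda)-z$ on the Fourier side away from $\supp\F f$, observe that the result is a Schwartz-valued analytic family on a neighborhood of $z_0$, and pull back by $\F^{-1}$ using its continuity on $\S(\R^d)$. The only difference is expository — the paper defines $\psi_z$ piecewise (the quotient on $\supp\F f$, zero elsewhere) and declares the Schwartz property and analyticity ``routine to verify,'' whereas you make these explicit via the smooth cutoff $\chi$ and the multiplier property, which is a reasonable way to fill in the same details.
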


\begin{proof}
Suppose $z_0\notin \overline{\{P(i\lambda) : \lambda\in\supp \F
f\}}$. Then there exists an open neighborhood $U$ of $z_0$ in
$\C$ such that, for each $z\in U$, the function
$\psi_z:\R^d\to\C$ which is given as
\begin{equation*}
\psi_z(\lambda)=
\begin{cases}
\frac{\F f(\lambda)}{P(i\lambda)-z}&\textup{if }\lambda\in\supp \F f;\\
0 &\textup{if }\lambda\notin\supp \F f,
\end{cases}
\end{equation*}
is well defined. It is then in fact in $\S (\R^d)$.
Furthermore, it is routine to verify that the map $\psi: U\to\S
(\R^d)$, obtained by sending $z$ to $\psi_z$, is analytic on
$U$ when $\S (\R^d)$ is equipped with its usual Fr{\'e}chet
topology. Since the inverse Fourier transform is a
homeomorphism of $\S (\R^d)$, and since the inclusion map of
$\S (\R^d)$ into $L^p(\R^d)$ is continuous, the map $\phi: U\to
D_{\widetilde T_p}$, defined as $\phi_z=\F^{-1}\psi_z$ for
$z\in U$, is also analytic on $U$. By construction it satisfies
$(\widetilde T_p -z)\phi_z=f$ for $z\in U$. We conclude that
$z_0\in \rho_{\widetilde T_p}(f)$.
\end{proof}

Combining the above lemmas, we have the following satisfactory result for $p=1$.

\begin{corollary}\label{cor:interpretation}
The closed operator $\widetilde T_1$ on $L^1(\R^d)$ has SVEP. Furthermore, if $f\in\S (\R^d)\subseteq D_{\widetilde T_1}$, then
\[
\sigma_{\widetilde T_1}(f)=\overline{\{P(i\lambda) : \lambda\in\supp \F f\}}.
\]
Hence the case $p=1$ of Theorem~\ref{thm:limSchwartz}, when read in the equivalent form
\begin{equation}\label{eq:equivalentform}
\lim_{n\to \infty}\| {\widetilde T}_1^n f\| _1 ^{1/n} = \sup\left\{|z| : z\in\overline{\{P(i\lambda) : \lambda\in\supp \F f\}}\right\}\qquad(f\in\S (\R^d)),
\end{equation}
expresses the validity of the local spectral radius formula
\[
\lim_{n\to \infty}\| {\widetilde T}_1^n f\| _1 ^{1/n} = \sup\{|z| : z\in\sigma_{\widetilde T_1}(f)\}
\]
in the extended positive real numbers.
\end{corollary}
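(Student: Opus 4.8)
The plan is to assemble the corollary directly from the three lemmas immediately preceding it, since no fresh analytic input is needed; everything substantive has already been done in establishing SVEP and in pinning down the local resolvent set from both sides.

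First, the SVEP assertion for $\widetilde T_1$ on $L^1(\R^d)$ is precisely Lemma~\ref{lem:SVEP}, so nothing further is required there. Second, for the identification of the local spectrum I would combine Lemmas~\ref{lem:localresolventissubset} and~\ref{lem:localresolventcontains}: passing to complements in $\C$, Lemma~\ref{lem:localresolventissubset} gives $\sigma_{\widetilde T_1}(f)\supseteq\overline{\{P(i\lambda):\lambda\in\supp\F f\}}$ for every $f\in D_{\widetilde T_1}$, while the case $p=1$ of Lemma~\ref{lem:localresolventcontains} gives the reverse inclusion $\sigma_{\widetilde T_1}(f)\subseteq\overline{\{P(i\lambda):\lambda\in\supp\F f\}}$ for $f\in\S(\R^d)$. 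Intersecting these yields the claimed equality $\sigma_{\widetilde T_1}(f)=\overline{\{P(i\lambda):\lambda\in\supp\F f\}}$ on $\S(\R^d)$.

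Third, for the local spectral radius statement I would first note that, because $P(\partial)$ maps $\S(\R^d)$ into itself, for $f\in\S(\R^d)$ one has $P(\partial)^n f=\widetilde T_1^n f\in\S(\R^d)\subseteq L^1(\R^d)$ for all $n\in\N$, so the case $p=1$ of Theorem~\ref{thm:limSchwartz} applies verbatim and reads $\lim_{n\to\infty}\|\widetilde T_1^n f\|_1^{1/n}=R(P,\F f)$. It then remains only to observe that, since the modulus function is continuous, $\sup\{|z|:z\in A\}=\sup\{|z|:z\in\overline A\}$ for any $A\subseteq\C$; hence $R(P,\F f)=\sup\{|P(i\lambda)|:\lambda\in\supp\F f\}$ coincides with $\sup\{|z|:z\in\overline{\{P(i\lambda):\lambda\in\supp\F f\}}\}$, which by the second step is exactly $\sup\{|z|:z\in\sigma_{\widetilde T_1}(f)\}$. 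This is \eqref{eq:equivalentform} and simultaneously exhibits it as the asserted local spectral radius formula in the extended positive reals.

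Since all ingredients are in hand, there is no genuine obstacle here; the only points deserving a moment's care are the remark that iterating $\widetilde T_1$ on a Schwartz function simply reproduces $P(\partial)^n f$, so that Theorem~\ref{thm:limSchwartz} transfers without change to $\widetilde T_1$, and the elementary fact that replacing a subset of $\C$ by its closure does not alter the supremum of the modulus over it.
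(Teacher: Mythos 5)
Your proposal is correct and follows exactly the route the paper intends: the paper's own proof is the single line ``Combining the above lemmas\ldots'', i.e.\ Lemma~\ref{lem:SVEP} for SVEP, Lemmas~\ref{lem:localresolventissubset} and~\ref{lem:localresolventcontains} (complemented and intersected) for the identification of $\sigma_{\widetilde T_1}(f)$, and Theorem~\ref{thm:limSchwartz} with the closure-invariance of $\sup|z|$ for the radius formula. No discrepancies.
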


While establishing this result we have exploited, in particular
in the proof of Lemma~\ref{lem:localresolventissubset}, that
the Fourier transform maps $L^1(\R^d)$ into a space of
distributions consisting of continuous functions. Whereas for
$1<p\leq 2$, the image still consists of equivalence classes of
functions, so that pointwise arguments as the above can perhaps
be adapted, it is known that for $2<p\leq\infty$ the image of
$L^p(\R^d)$ contains distributions of strictly positive order,
see \cite[Theorem~7.6.6]{Hoer}.\footnote{An upper bound for the
order is known, see \cite[p.~242]{Hoer}.} Therefore it seems
likely that a more refined analysis will be necessary to
investigate the general case, using a priori knowledge about
the elements of $D_{\widetilde T_p}$ and also the analyticity
of the local resolvent functions, which we did not use in the
proof of Lemma~\ref{lem:localresolventissubset} at all, as
possible extra ingredients.

In view of the above we feel confident enough to propose the following.
\begin{conjecture}
Theorems~\ref{thm:limSchwartz} and~\ref{thm:limLp}, when read as in \eqref{eq:equivalentform},
are local spectral radius formulas.
\end{conjecture}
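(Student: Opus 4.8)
Reading Theorems~\ref{thm:limSchwartz} and~\ref{thm:limLp} as local spectral radius formulas means identifying $\sigma_{\widetilde T_p}(f)$ with $\overline{\{P(i\lambda):\lambda\in\supp\F f\}}$; since $\sup\{|z|:z\in\overline A\}=\sup\{|z|:z\in A\}$, the two sides of these theorems then become literally the two sides of $\lim_{n\to\infty}\|\widetilde T_p^{\,n}f\|_p^{1/n}=\sup\{|z|:z\in\sigma_{\widetilde T_p}(f)\}$, and because the existence of the limit is already part of Theorems~\ref{thm:limSchwartz} and~\ref{thm:limLp}, no a priori local spectral radius theorem for the unbounded $\widetilde T_p$ is needed, only the computation of the local spectrum. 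The case $p=1$, $f\in\S(\R^d)$ is Corollary~\ref{cor:interpretation}, and Lemma~\ref{lem:localresolventcontains} already gives $\rho_{\widetilde T_p}(f)\supseteq\C\setminus\overline{\{P(i\lambda):\lambda\in\supp\F f\}}$ for every $p$ and every $f\in\S(\R^d)$; its proof moreover adapts to the setting of Theorem~\ref{thm:limLp} once $\supp\F f$ is known to be compact, since then $\F f/(P(i\lambda)-z)$ can be realized as $m_z\cdot\F f$ for a $z$-analytic family $m_z\in C_c^\infty(\R^d)$ agreeing with $1/(P(i\lambda)-z)$ near $\supp\F f$, and convolution by $\F^{-1}m_z\in\S(\R^d)\subseteq L^1(\R^d)$ is bounded on $L^p(\R^d)$ for all $p$. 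The one remaining ingredient in all cases is therefore the analogue of Lemma~\ref{lem:localresolventissubset}, i.e.\ the inclusion $\rho_{\widetilde T_p}(f)\subseteq\C\setminus\overline{\{P(i\lambda):\lambda\in\supp\F f\}}$, together with the analogue of Lemma~\ref{lem:SVEP} (SVEP of $\widetilde T_p$, which, as the text notes, is desirable but not indispensable for the formula itself).

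\textbf{The range $1\le p\le 2$.} Here the Hausdorff--Young inequality keeps us in the realm of genuine functions: $\F$ maps $L^p(\R^d)$ into $L^q(\R^d)$ with $q=p/(p-1)$, so for $f\in D_{\widetilde T_p}$ the identity $(P(i\lambda)-z)\F\phi_z(\lambda)=\F f(\lambda)$ extracted from the local resolvent equation holds for almost every $\lambda$, and $z\mapsto\F\phi_z$ is norm-continuous from the relevant neighbourhood $U$ into $L^q(\R^d)$. The plan is to replace the pointwise evaluation at $\lambda_0$ in the proof of Lemma~\ref{lem:localresolventissubset} by an argument at a Lebesgue point, or, after localizing by a cutoff $\chi\in C_c^\infty$ supported where $P(i\cdot)$ takes values in $U$ and passing to the inverse Fourier transform of $\chi\F f=\chi(P(i\lambda)-z)\F\phi_z$, to conclude that $\F f$ vanishes almost everywhere on the open set $\{\lambda:P(i\lambda)\in\rho_{\widetilde T_p}(f)\}$; the homogeneous version of the same argument yields SVEP. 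For $p=2$ this can alternatively be seen directly, since via Plancherel $\widetilde T_2$ is unitarily equivalent to multiplication by $P(i\lambda)$ on $L^2(\R^d)$. I expect this range to require some care but no essentially new idea.

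\textbf{The range $2<p\le\infty$: the main obstacle.} By \cite[Theorem~7.6.6]{Hoer} the image $\F(L^p(\R^d))$ now contains distributions of strictly positive (though, by \cite[p.~242]{Hoer}, uniformly bounded) order, so neither pointwise nor almost-everywhere arguments are available, and one must treat $(P(i\lambda)-z)\F\phi_z=\F f$ as an identity of genuine distributions with $z\mapsto\F\phi_z$ analytic into $\S'(\R^d)$. The two resources not yet exploited are the full analyticity of the local resolvent function $\phi$ -- Lemma~\ref{lem:localresolventissubset} used only its continuity -- and a priori structural information on $D_{\widetilde T_p}$. Concretely, I would fix $\lambda_0$ with $z_0:=P(i\lambda_0)\in\rho_{\widetilde T_p}(f)$, localize by a cutoff near $\lambda_0$, and attempt to divide the distributional identity by $P(i\lambda)-z_0$ on a small ball, where the zeros of $P(i\cdot)-z_0$ form a proper analytic variety, using the polynomial lower bounds for $P(i\lambda)-z_0$ near that variety from \cite[Lemma~5.7]{TrevesLPDE} in combination with the bounded-order information to make this division continuous, while keeping track of the dependence on $z$ so as to use analyticity. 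The hard part -- and the reason the statement is offered only as a conjecture -- is precisely whether this division can be carried out compatibly with both the order bound and the analyticity in $z$; were it available, $\F f$ would be forced to vanish near every such $\lambda_0$, the local spectrum would be identified, and the conjecture would follow.
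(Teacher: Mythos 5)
The statement you are addressing is a \emph{conjecture}: the paper itself proves it only for $p=1$ and $f\in\S(\R^d)$ (Corollary~\ref{cor:interpretation}, via Lemmas~\ref{lem:SVEP}--\ref{lem:localresolventcontains}) and explicitly leaves the remaining cases open. Your proposal does not close this gap either, and to your credit you say so; what you have written is a research program, not a proof. Its skeleton coincides with the paper's own discussion: the correct reading of the conjecture as the identification $\sigma_{\widetilde T_p}(f)=\overline{\{P(i\lambda):\lambda\in\supp\F f\}}$ (so that no a priori spectral radius theorem for unbounded operators is required), the observation that Lemma~\ref{lem:localresolventcontains} already gives one inclusion for all $p$ (and adapts to compactly supported $\F f$ via a cutoff multiplier $m_z$ and convolution by $\F^{-1}m_z\in L^1$), and the diagnosis that the missing piece is the analogue of Lemma~\ref{lem:localresolventissubset}, obstructed for $p>2$ by the positive order of $\F(L^p)$.

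Two cautions on the parts you present as nearly routine. First, for $1<p\le 2$ the step from $(P(i\lambda)-z)\F\phi_z=\F f$ a.e.\ to $\F f=0$ a.e.\ on $\{\lambda: P(i\lambda)\in\rho_{\widetilde T_p}(f)\}$ is genuinely delicate: the exceptional null set depends on $z$, the fibre $\{\lambda:P(i\lambda)=z_0\}$ is itself a null set, and pointwise values of $\F\phi_z$ are not controlled by its $L^q$-norm, so ``evaluation at a Lebesgue point'' does not directly apply; any integrated substitute must control $\int|\F f|/|P(i\lambda)-z|$ near the zero variety, which is exactly where the difficulty sits. The paper's guarded phrase that pointwise arguments ``can perhaps be adapted'' is the honest assessment; your ``no essentially new idea'' overstates it. Second, your program for $2<p\le\infty$ (division of the distributional identity by $P(i\lambda)-z_0$ using order bounds and {\L}ojasiewicz-type lower bounds, compatibly with analyticity in $z$) is plausible but is precisely the unresolved core of the conjecture. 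You should also note that Theorem~\ref{thm:limLp} includes the case $R(P,\F f)=\infty$ with non-compact $\supp\F f$, where your cutoff construction for the easy inclusion does not apply as stated. In short: the proposal is a faithful and sensible elaboration of the paper's own partial results and obstacles, but it does not prove the conjecture, and the steps it labels as minor are where the real work lies.
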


In conclusion we mention that this conjecture is also supported by the results for compact connected
Lie groups in \cite{AdJ2}, where, for arbitrary $p$ and smooth functions on the group, not only the analogue
of Theorem~\ref{thm:limSchwartz} is established, but also the interpretation, as in Corollary~\ref{cor:interpretation},
of this analogous result as a local spectral radius formula.


\begin{thebibliography}{99}

\bibitem{Ab} L.D.~Abreu, Real Paley-Wiener theorems for the Koornwinder-Swarttouw $q$-Hankel transform,
\textit{ J. Math. Anal. Appl.\ }{\bf  334} (2007), 223--231.

\bibitem{AlRi90} A.~Albrecht, W.J.~Ricker, Local spectral properties of constant coefficient differential operators in $L\sp p(R\sp N)$,
\textit{J.\ Operator Theory\ }{\bf 24} (1990), 85--103.

\bibitem{AlRi95} A.~Albrecht, W.J.~Ricker, Functional calculi and decomposability of unbounded multiplier operators in $L\sp p(R\sp N)$,
\textit{Proc.\ Edinburgh Math.\ Soc.\ (2)} {\bf 38} (1995), 151--166.

\bibitem{AlRi96} A.~Albrecht, W.J.~Ricker, Local spectral properties of certain matrix differential operators in $L\sp p(R\sp N)\sp m$,
\textit{J.\ Operator Theory} {\bf 35} (1996), 3--37.

\bibitem{AlRi98} A.~Albrecht, W.J.~Ricker, On $p$-dependent local spectral properties of certain linear differential
operators in $L\sp p(\mathbf R\sp N)$, \textit{Studia Math.\ }{\bf 130} (1998), 23--52.

\bibitem{Al1} F.~Al-Musallam, {A Whittaker transform over a half-line},
\textit{Integral Transforms Spec.\ Funct.\ }{\bf 12} (2001), 201--212.

\bibitem{Al2} F.~Al-Musallam, {The range of finite integral transforms arising from $n$-th order singular self-adjoint differential operators},
\textit{Fract.\ Calc.\ Appl.\ Anal.\ }{\bf 6} (2003), 175--186.

\bibitem{AlT1} F.~Al-Musallam, V.K.~Tuan, {A modified and a finite index Weber transforms},
\textit{Z.\ Anal.\ Anwendungen\ }{\bf 21} (2002), 315--334.

\bibitem{AlT2} F.~Al-Musallam, V.K.~Tuan, {A finite and an infinite Whittaker integral transform},
\textit{Comput.\ Math.\ Appl.\ }{\bf 46} (2003), 1847--1859.

\bibitem{An-1} N.B.~Andersen, {On real Paley-Wiener theorems for certain integral transforms},
\textit{J.\ Math.\ Anal.\ Appl.\ }{\bf 288} (2003), 124--135.

\bibitem{An0} N.B.~Andersen, {Real Paley-Wiener theorems for the inverse Fourier transform on a Riemannian symmetric space},
\textit{Pacific J.\ Math.\ }{\bf 213}  (2004), 1--13.

\bibitem{An01} N.B.~Andersen, {A simple proof of a Paley-Wiener type theorem for the Ch\'ebli--Trim\`eche transform},
\textit{Publ.\ Math.\ Debrecen\ }{\bf 64} (2004), 473--479.

\bibitem{An1} N.B.~Andersen, {Real Paley--Wiener theorems}, \textit{Bull.\ London Math.\ Soc.\ }{\bf 36} (2004), 504--508.

\bibitem{An2} N.B.~Andersen, {On the range of the Ch\'ebli--Trim\`eche transform},
\textit{Monatsh.\ Math.\ }{\bf 144} (2005), 193--201.

\bibitem{An3} N.B.~Andersen, {Real Paley-Wiener theorems for the Hankel transform},
\textit{J.\ Fourier Anal.\ Appl.\ }{\bf 12} (2006), 17--25.

\bibitem{An4} N.B.~Andersen, {Real Paley-Wiener theorems for the Dunkl transform on $\Bbb R$},
\textit{Integral Transforms Spec.\ Funct.\ }{\bf 17} (2006), 543--547.

\bibitem{AdJ1} N.B.~Andersen, M.F.E.~de Jeu, {Elementary proofs of Paley--Wiener theorems for the Dunkl transform on the real line},
\textit{Int.\ Math.\ Res.\ Not.\ }{\bf 2005}, 1817--1831.

\bibitem{AdJ2} N.B.~Andersen, M.F.E.~de Jeu, {Local spectral radius formulas on compact Lie groups},
\textit{to appear in J.\ Lie Theory}.

\bibitem{Ba1} H.H.~Bang, {A property of infinitely differentiable functions},
\textit{Proc.\ Amer.\ Math.\ Soc.\ }{\bf 108} (1990), 73--76.

\bibitem{Ba12} H.H.~Bang, {Remarks on a property of infinitely differentiable functions},
\textit{Bull.\ Polish Acad.\ Sci.\ Math.\ }{\bf 41} (1993), 197--206.

\bibitem{Ba2} H.H.~Bang, {Functions with bounded spectrum},
\textit{Trans.\ Amer.\ Math.\ Soc.\ }{\bf 347} (1995), 1067--1080.

\bibitem{BaY} H.H.~Bang, {A property of entire functions of exponential type},
\textit{Analysis} {\bf 15} (1995), 17--23.

\bibitem{Ba1995b} H.H.~Bang, {Aymptotic behaviour of the sequence of norms of derivatives},
\textit{J.\ Math.\ Sci.\ Univ.\ Tokyo\ }{\bf 2} (1995), 611-620.

\bibitem{Bang} H.H.~Bang, {On the Bernstein-Nikolsky inequality. II.},
\textit{Tokyo J.\ Math.\ }{\bf 18} (1995), 123--131.

\bibitem{Ba21} H.H.~Bang, {On an algebra of pseudodifferential operators},
\textit{Sb.\ Math.\ }{\bf 186} (1995), 929--940; translation from \textit{Mat.\ Sb.\ }{\bf 186} (1995), 3-14.

\bibitem{Ba22} H.H.~Bang, {The existence of a point spectral radius for pseudodifferential operators},
\textit{[J] Dokl.\ Math.\ }{\bf 53} (1996), 420-422; translation from
\textit{Dokl.\ Akad.\ Nauk, Ross.\ Akad.\ Nauk} {\bf 348} (1996), 740-742.

\bibitem{Ba3} H.H.~Bang, {The Paley-Wiener-Schwartz theorem for nonconvex domains}, in
\emph{Functional analysis and global analysis (Quezon City, 1996)}, 14--30, Springer, Singapore, 1997.

\bibitem{Ba4} H.H.~Bang, {Paley-Wiener-Schwartz type theorems},
\textit{[A] Proc.\ Steklov Inst.\ Math.\ }{\bf 214} (1996), 291-311; translation from
\textit{Tr.\ Mat.\ Inst.\ Steklova} {\bf 214} (1997), 298-319.

\bibitem{Ba5} H.H.~Bang, {Nonconvex cases of the Paley-Wiener-Schwartz theorem},
\textit{Dokl.\ Akad.\ Nauk} {\bf 354} (1997), 165--168.

\bibitem{Ba6} H.H.~Bang, {The study of the properties of functions belonging to an Orlicz space depending on the geometry of their spectra},
\textit{Izv.\ Math.\ }{\bf 61} (1997), 399--434; translation from
\textit{Izv.\ Ross.\ Akad.\ Nauk Ser.\ Mat.\ }{\bf 61} (1997), 163--198.

\bibitem{BMa} H.H.~Bang; T.T.~Mai, {A property of entire functions of exponential type for Lorentz spaces},
\textit{Vietnam J.\ Math.\ }{\bf 32} (2004), 219--225.

\bibitem{BMo1} H.H.~Bang, M.~Morimoto, {On the Bernstein-Nikolsky inequality},
\textit{Tokyo J.\ Math.\ }{\bf 14} (1991), 231--238.

\bibitem{BMo2} H.H.~Bang, M.~Morimoto, {The sequence of Luxemburg norms of derivatives},
\textit{Tokyo J.\ Math.\ }{\bf 17} (1994), 141--147.

\bibitem{BBM} J.J.~Betancor, J.D.~Betancor, J.M.R.~M\'endez,
Paley-Wiener type theorems for Ch\'ebli--Trim\`eche transforms,
\textit{Publ.\ Math.\ }{\bf 60} (2002), 347-358.

\bibitem{CT} C.~Chettaoui, K.~Trim\`eche, {New type Paley-Wiener theorems for the Dunkl transform on $\Bbb R$},
\textit{Integral Transforms Spec.\ Funct.\ }{\bf 14} (2003), 97--115.

\bibitem{COT} C.~Chettaoui, Y.~Othmani, K.~Trim\`eche, {On the range of the Dunkl transform on $\R$},
\textit{Math.\ Sci.\ Res.\ J.\ }{\bf  8} (2004), 85--103.

\bibitem{Du} C.F.~Dunkl, \emph{Hankel transforms associated to finite reflection groups},
in ``Proceedings of the special session on hypergeometric functions on domains of positivity, Jack polynomials and applications,"
Tampa, 1991, \emph{Contemp.\ Math.\ }{\bf 138}, 123-138.

\bibitem{ErdWang} I.~Erd{\'e}lyi, S.~Wang, {\it A local spectral theory for closed operators},
London Mathematical Society Lecture Note Series, 105. Cambridge University Press, Cambridge, 1985.

\bibitem{GeSchi} I.M.\ Gelfand, G.E.\ Schilow, {\it Verallgemeinerte Funktionen (Distributionen).
II: Lineare topologische R{\"a}ume. R{\"a}ume von Grundfunktionen und verallgemeinerten Funktionen} (German).
Hochschulb{\"u}cher f{\"u}r Mathematik, Bd.\ 48. VEB Deutscher Verlag der Wissenschaften, Berlin, 1962.

\bibitem{Hoer} L.~H\"ormander, {\it The analysis of linear partial differential operators. I.
Distribution theory and Fourier analysis}, Second edition. Springer-Verlag, Berlin, 1990.

\bibitem{JT} M.~Jelassi, L.T.~Rachdi, {On the range of the Fourier transform associated with the spherical mean operator},
\textit{Fract.\ Calc.\ Appl.\ Anal.\ }{\bf 7} (2004), 379--402.

\bibitem{dJ1} M.F.E.~de~Jeu, {The Dunkl transform}, \textit{Invent.\ Math.\ }{\bf 113} (1993), 147--162.

\bibitem{dJ2} M.~de~Jeu, {Paley--Wiener theorems for the Dunkl transform},
\textit{Trans.\ Amer.\ Math.\ Soc.\ }{\bf 358} (2006), 4225--4250.

\bibitem{LaurNeu} K.B.~Laursen, M.M.~Neumann, {\it An introduction to local spectral theory},
London Mathematical Society Monographs. New Series, 20. The Clarendon Press, Oxford University Press, New York, 2000.

\bibitem{MT} H.~Mejjaoli, K.~Trim{\`e}che, {Spectrum of functions for the Dunkl transform on $R^d$},
\textit{Fract.\ Calc.\ Appl.\ Anal.\ }{\bf 10} (2007), 19--38.

\bibitem{Napalkov} V.V.~Napalkov, {\it Convolution equations in multidimensional spaces} (Russian), Nauka, Moscow, 1982.

\bibitem{OT} Y.~Othmani, K.~Trim\`eche, {Real Paley-Wiener theorems associated with the Weinstein operator},
\textit{Mediterr.\ J.\ Math.\ }{\bf 3} (2006), 105--118.

\bibitem{PW} R.~Paley, N.~Wiener, {The Fourier Transforms in the Complex Domain},
Amer.\ Math.\ Soc.\ Colloq.\ Publ.\ Ser., Vol.~19, Providence, RI, 1934.

\bibitem{Pes1} I.~Pesenson, {Sampling of Paley-Wiener functions on stratified groups},
\textit{J.\ Fourier Anal.\ Appl.\ }{\bf 4} (1998), 271--281.

\bibitem{Pes2} I.~Pesenson, {A sampling theorem on homogeneous manifolds},
\textit{Trans.\ Amer.\ Math.\ Soc.\ }{\bf 352} (2000), 4257--4269

\bibitem{Pes3} I.~Pesenson, {Sampling of band-limited vectors},
\textit{J.\ Fourier Anal.\ Appl.\ }{\bf 7} (2001), 93--100.

\bibitem{Pes4} I.~Pesenson, {Deconvolution of band limited functions on non-compact symmetric spaces},
\textit{Houston J.\ Math.\ }{\bf 32} (2006), 183--204

\bibitem{Pes5} I.~Pesenson, {Analysis of band-limited functions on quantum graphs},
\textit{Appl.\ Comput.\ Harmon.\ Anal.\ }{\bf 21} (2006), 230--244.

\bibitem{PlPo} M.~Plancherel, G.~P{\'o}lya,
{Fonctions enti\`eres et int\'egrales de Fourier multiples.}\ I, \textit{Comment.\ Math.\ Helv.\ }{\bf 9} (1937), 224--248.

\bibitem{PruPu} B.~Prunaru, M.~Putinar, {The generic local spectrum of any operator is the full spectrum},
\textit{Bull.\ London Math.\ Soc.\ }{\bf 31} (1999), 332--336.

\bibitem{RoEntire} L.I.~Ronkin, {\it Introduction to the theory of entire functions of several variables}.
Translated from Russian by Israel Program for Scientific Translations. Translations of Mathematical Monographs, Vol.\ 44.
American Mathematical Society, Providence, R.I., 1974.

\bibitem{RoRegular} L.I.~Ronkin, {\it Functions of Completely Regular Growth}.
Translated from Russian by A.\ Ronkin and I.\ Yedvabnik. Mathematics and its Applications (Soviet Series), 81.
Kluwer Academic Publishers Group, Dordrecht, 1992.

\bibitem{Ru} W.~Rudin, {\it Functional Analysis}, McGraw-Hill Book Co., 1973.

\bibitem{Schechter} M.~Schechter, {\it Spectra of partial differential operators}.
North-Holland Series in Applied Mathematics and Mechanics, Vol.\ 14. North-Holland Publishing Co., Amsterdam-London;
American Elsevier Publishing Co., Inc., New York, 1971.

\bibitem{ThaXu} S.\ Thangavelu, Y.\ Xu, {Convolution operator and maximal function for the Dunkl transform},
\textit{J.\ Anal.\ Math.\ }{\bf 97} (2005), 25--55.

\bibitem{TrevesLPDE} F.\ Tr{\`e}ves, {\it Linear partial differential equations with constant coefficients:
Existence, approximation and regularity of solutions}, Mathematics and its Applications, Vol.\ 6. Gordon and Breach Science Publishers,
New York-London-Paris, 1966.

\bibitem{Tr} F.~Tr{\`e}ves, {\it Topological vector spaces, distributions and kernels}, Academic Press, New York - London, 1967.

\bibitem{Th} T.V.~Thuong, {Some properties of functions with bounded spectrum},
\textit{Acta Math.\ Vietnam.\ }{\bf 24} (1999), 343--352.

\bibitem{Tu0} V.K.~Tuan, {On the Paley--Wiener theorem}, in \emph{Theory of Functions and Applications.
Collection of Works dedicated to the Memory of M.M. Djrbashian}, 193--196, Louys Publishing House, Yerevan, 1995.

\bibitem{Tu00} V.K.~Tuan, {Supports of functions and integral transforms}, in
\emph{Proceedings of International workshop on the recent advances in applied mathematics, RAAM '96}, 507-521,
Kuwait Univ., Department of Mathematics and Computer Science, Kuwait, 1996.

\bibitem{Tu01} V.K.~Tuan, {On the range of the $Y$ transform}, \textit{Bull.\ Austral.\ Math.\ Soc.\ }{\bf 54} (1996), 329--345.

\bibitem{Tu02} V.K.~Tuan, {On the range of the Hankel and extended Hankel transforms},
\textit{J.\ Math.\ Anal.\ Appl.\ }{\bf 209} (1997), 460--478.

\bibitem{Tu03} V.K.~Tuan, {Airy integral transform and the Paley-Wiener theorem}, in
\emph{Transform methods \& special functions, Varna '96},  523--531, Bulgarian Acad. Sci., Sofia, 1998.

\bibitem{Tu04} V.K.~Tuan, {New type Paley-Wiener theorems for the modified multidimensional Mellin transform},
\textit{J.\ Fourier Anal.\ Appl.\ }{\bf 4} (1998), 317--328.

\bibitem{Tu1} V.K.~Tuan, {Paley--Wiener-type theorems}, \textit{Fract.\ Calc.\ Appl.\ Anal.\ }{\bf 2} (1999), 135--143.

\bibitem{Tu2} V.K.~Tuan, {On the Supports of Functions}, \textit{Numer.\ Funct.\ Anal.\ Optim.\ }{\bf 20} (1999), 387--394.

\bibitem{Tu3} V.K.~Tuan, {A real-variable Paley--Wiener theorem for the Dunkl transform},
\textit{Abstract and Applied Analysis}, 365-371, World Sci.\ Publ., River Edge, NJ (2004).

\bibitem{Tu4} V.K.~Tuan, {Paley-Wiener and Boas theorems for singular Sturm-Liouville integral transforms},
\textit{Adv.\ in Appl.\ Math.\ }{\bf 29} (2002), 563--580.

\bibitem{TuIS1} V.K.~Tuan, A.~Ismail, M.~Saigo, {Plancherel and Paley-Wiener theorems for an index integral transform},
\textit{J.\ Korean Math.\ Soc.\ }{\bf  37} (2000), 545--563.

\bibitem{TuIS2} V.K.~Tuan, A.~Ismail, M.~Saigo, {On an index integral transform involving the modified Bessel function},
\textit{Integral Transforms Spec.\ Funct.\ }{\bf  12} (2001), 375--388.

\bibitem{TuZa0} V.K.~Tuan, A.I.~Zayed, {Paley-Wiener-type theorem for a class of integral transforms arising from a singular Dirac system},
\textit{Z.\ Anal.\ Anwendungen\ }{\bf 19} (2000), 695--712.

\bibitem{TuZa1} V.K.~Tuan, A.I.~Zayed, {Generalization of a theorem of Boas to a class of integral transforms},
\textit{Results Math.\ }{\bf 38} (2000), 362--376.

\bibitem{TuZa} V.K.~Tuan, A.I.~Zayed, {Paley--Wiener-Type Theorems for a Class of Integral Transforms},
\textit{J.\ Math.\ Anal.\ Appl.\ }{\bf 266} (2002), 200--226.

\bibitem{Vasilescu} F.-H.~Vasilescu, {\it Analytic functional calculus and spectral decompositions}.
Translated from Romanian. Mathematics and its Applications (East European Series), Vol.\ 1. D.\ Reidel Publishing Co., Dordrecht, 1982.

\bibitem{Vasipaper} F.-H.~Vasilescu, {Analytic operators and spectral decompositions},
\textit{Indiana Univ.\ Math.\ J.\ }{\bf 34} (1985), 705--722.

\bibitem{Vrb} P.~Vrbova, {On local spectral properties of operators in Banach space},
\textit{Czech.\ J.\ Math.\ }{\bf 23} (1973), 483-492.

\end{thebibliography}
\end{document}